\numberwithin{equation}{section}
\theoremstyle{plain}
\newtheorem{theorem}{Theorem}[section]
\newtheorem{lemma}[theorem]{Lemma}
\newtheorem{proposition}[theorem]{Proposition}
\newtheorem{claim}[theorem]{Claim}
\newtheorem{corollary}[theorem]{Corollary}
\newtheorem{conjecture}[theorem]{Conjecture}
\theoremstyle{definition}
\newtheorem{definition}[theorem]{Definition}
\theoremstyle{remark}
\newtheorem{remark}[theorem]{Remark}
\newlist{thenum}{enumerate}{1}
\setlist[thenum, 1]{label=(\alph*), ref=\thetheorem(\alph*)}
\let\P\relax
\DeclareMathOperator{\P}{\mathbb{P}}
\DeclareMathOperator{\E}{\mathbb{E}}
\DeclareMathOperator{\Tr}{Tr}
\DeclareMathOperator{\diag}{diag}
\DeclareMathOperator{\Var}{\mathbb{V}ar}
\newcommand*{\0}{\bm{0}}
\newcommand*{\1}{\bm{1}}
\newcommand*{\inner}[2]{\left\langle #1, #2 \right\rangle}
\providecommand{\abs}[1]{\lvert#1\rvert}
\providecommand{\norm}[1]{\lVert#1\rVert}
\newcommand*{\wt}{\widetilde}
\newcommand*{\ind}[1]{\operatorname{\mathbbm{1}}\{#1\}}
\DeclareMathOperator*{\bb1}{\mathbbm{1}}
\newcommand*{\tv}[2]{\operatorname{TV}(#1, #2)}
\newcommand*{\kl}[2]{\operatorname{KL}(#1\parallel#2)}
\newcommand*{\Df}[2]{\operatorname{D}_f(#1\parallel#2)}
\DeclareMathOperator{\TV}{TV}
\newcommand*{\RR}{\mathbb{R}}
\renewcommand*{\SS}{\mathbb{S}}
\newcommand*{\cF}{\mathcal{F}}
\newcommand*{\cG}{\mathcal{G}}
\newcommand*{\cN}{\mathcal{N}}
\newcommand*{\cP}{\mathcal{P}}
\newcommand*{\cQ}{\mathcal{Q}}
\newcommand*{\cW}{\mathcal{W}}
\newcommand*{\dG}{\dot{\mathcal{G}}}
\renewcommand*{\b}{\bm{b}}
\newcommand*{\e}{\bm{e}}
\newcommand*{\h}{\bm{h}}
\newcommand*{\m}{\bm{m}}
\renewcommand*{\v}{\bm{v}}
\newcommand*{\x}{\bm{x}}
\newcommand*{\y}{\bm{y}}
\newcommand*{\z}{\bm{z}}
\newcommand*{\A}{\bm{A}}
\newcommand*{\B}{\bm{B}}
\newcommand*{\D}{\bm{D}}
\let\Hu\H
\renewcommand*{\H}{\bm{H}}
\newcommand*{\I}{\bm{I}}
\newcommand*{\K}{\bm{K}}
\newcommand*{\M}{\bm{M}}
\renewcommand*{\O}{\bm{O}}
\newcommand*{\R}{\bm{R}}
\renewcommand*{\S}{\bm{S}}
\newcommand*{\W}{\bm{W}}
\newcommand*{\V}{\bm{V}}
\newcommand*{\X}{\bm{X}}
\newcommand*{\Y}{\bm{Y}}
\newcommand*{\Z}{\bm{Z}}
\newcommand*{\bP}{\bm{P}}
\newcommand*{\bQ}{\bm{Q}}
\newcommand*{\bmu}{\bm{\mu}}
\newcommand*{\bPi}{\bm{\varPi}}
\newcommand*{\bSigma}{\bm{\varSigma}}
\newcommand*{\rB}{\mathrm{B}}
\newcommand*{\bra}{\overline{a}}
\newcommand*{\brs}{\overline{s}}
\newcommand*{\ER}{Erd\Hu{o}s-R\'enyi}
\begin{document}
\title{Phase transition in noisy high-dimensional\\
random geometric graphs}

\author{Suqi Liu\thanks{Princeton University, Princeton, NJ 08544, USA;
Email: \texttt{\{suqil, mracz\}@princeton.edu}.
Research supported in part by NSF grant DMS-1811724.
}
\and
Mikl\'os Z.\ R\'acz\footnotemark[1]}

\maketitle
\begin{abstract}
We study the problem of detecting latent geometric structure
in random graphs.
To this end, we consider the soft high-dimensional random geometric graph
$\mathcal{G}(n,p,d,q)$,
where each of the $n$ vertices corresponds to an independent random
point distributed uniformly on the sphere~$\mathbb{S}^{d-1}$,
and the probability that two vertices are connected by an edge is a decreasing
function of the Euclidean distance between the points.
The probability of connection is parametrized by $q \in [0,1]$,
with smaller $q$ corresponding to weaker dependence on the geometry;
this can also be interpreted as the level of noise in the geometric graph.
In particular,
the model smoothly interpolates between
the spherical hard random geometric graph $\mathcal{G}(n,p,d)$
(corresponding to $q = 1$) and the Erd\Hu{o}s-R\'enyi model $\mathcal{G}(n,p)$
(corresponding to $q = 0$).
We focus on the dense regime (i.e., $p$ is a constant).

We show that if $nq \to 0$ or $d \gg n^{3} q^{2}$,
then geometry is lost:
$\mathcal{G}(n,p,d,q)$ is asymptotically indistinguishable from
$\mathcal{G}(n,p)$.
On the other hand, if $d \ll n^{3} q^{6}$,
then the signed triangle statistic provides an asymptotically powerful test for
detecting geometry.
These results generalize those of Bubeck, Ding, Eldan, and R\'acz (2016) for
$\mathcal{G}(n,p,d)$,
and give quantitative bounds on how the noise level affects the dimension
threshold for losing geometry.
We also prove analogous results under a related but different distributional
assumption,
and we further explore generalizations of signed triangles in order to
understand the intermediate regime left open by our results.
\end{abstract}

\section{Introduction} \label{se:intro}

Random graphs emerge as canonical models for many real-life applications,
including social networks, wireless communications,
and in the biological sciences.
Among them, the simplest yet structurally rich model is the \ER{} random graph,
which has been studied extensively
(e.g., \cite{erdos59on,bollobas1976cliques}).
However, in many scenarios the independence of edges is an oversimplified
assumption and
is often insufficient to capture the subtle relations in complex networks.

A natural extension is to assume an underlying geometric structure.
The graph is then generated according to some dependency over this structure.
Due to their wide applicability,
random graphs of this kind have various incarnations
in different fields: random geometric graphs, latent space models,
spatial networks, random connection models, to name a few.
We refer the reader to~\cite{penrose2003random} for a comprehensive theoretical
treatment of the subject.

In real-world networks, the geometric space is often latent, not visible.
Most of the time,
only the graph structure is observable rather than the latent
variables that generated the graph.
For example, in a communication network,
we can only observe the connections but not the reasons behind links.
This brings up the natural question of understanding the extent to which
a latent space model is an accurate description.
As a first step, it is crucial to understand when it is possible to
even detect the presence of geometry,
which is necessary for justifying
the underlying geometric structure assuming a generating process.
Mathematically this was first studied by Devroye, Gy\"{o}rgy, Lugosi,
and Udina~\cite{devroye2011high} for
a particular random geometric graph equipped with a spherical geometry.
They showed that this random geometric graph
becomes indistinguishable from an \ER{} graph when the dimension of the
sphere goes to infinity.
In other words, geometry is lost in high dimensions.
Subsequently, Bubeck, Ding, Eldan, and R\'acz~\cite{bubeck2016testing}
pinpointed the phase transition
for testing high-dimensional geometry in dense random graphs.
Our paper builds upon and generalizes this result.

A caveat of the aforementioned results is that the model is restricted to
a ``hard geometry'' setting, where the existence of an edge is a deterministic
function of the distance between the latent variables corresponding to the two
vertices.
This assumption overlooks the fact that in reality connections often bear
a stochastic dependence on the latent variables.
Consequently, the phase transition in the hard geometry setting happens at
dimensions as high as the cube of the number of vertices,
seemingly much larger than what many high-dimensional statistics theories
would consider~\cite{wainwright2019high}.

Our focus in this paper is to understand the above question in the setting of
\emph{soft} random geometric graphs,
in which the softness can be viewed as noise in the geometric graph.
We are particularly interested in the interplay between
dimensionality and noise in affecting the phenomenon of losing geometry in
random graphs.
To this end,
we study a particular type of soft random geometric graph where there is a
parameter $q \in [0,1]$ that naturally reflects the level of noise,
corresponding to the strength of geometry.
This model is an interpolation between the hard spherical random geometric graph
(corresponding to $q=1$) and the \ER{} model (corresponding to $q=0$).
Our main results provide bounds,
as a function of both dimension and geometry strength,
of where the phase transition lies.
In particular, these quantitatively demonstrate the qualitative phenomenon that
the dimension threshold for losing geometry is smaller for soft random
geometric graphs and decreases as a function of geometry strength.
We next specify the precise setting of our work,
before describing our main results.

\subsection{Random geometric graphs} \label{se:mod}
We first describe the spherical random geometric graph model $\cG(n, p, d)$
\cite{devroye2011high,bubeck2016testing}.
For a set of $n$ vertices $V = [n] \coloneqq \{1,2,\ldots,n\}$,
associate each vertex $i$ with a point represented
by a $d$-dimensional random vector $\x_i$.
We assume that $\x_1, \ldots, \x_n$ are independently uniformly distributed on
the sphere $\mathbb{S}^{d-1} \coloneqq \{\x \in \RR^d: \norm{\x} = 1\}$,
where $\norm{\cdot}$ stands for the Euclidean norm.
For a fixed value of $p \in [0, 1]$, referred to as the edge probability,
the edge set of the graph
is defined as follows:
There is an undirected edge
between distinct vertices $i$ and $j$ if and only if
\begin{equation}\label{eq:dot_product}
\inner{\x_i}{\x_j} \ge t_{p,d},
\end{equation}
where $\inner{\cdot}{\cdot}$ denotes the inner product of two vectors.
Equivalently,
$i$ and $j$ are connected by an edge if and only if their Euclidean
distance satisfies $\norm{\x_i - \x_j} \le \sqrt{2(1 - t_{p,d})}$.
The threshold $t_{p,d}$,
which may depend on $p$ and $d$,
is determined by the equation
\begin{equation*}
\P(\inner{\x_i}{\x_j} \ge t_{p,d}) = p,
\end{equation*}
so that the probability of an edge existing between any pair of distinct
vertices is $p$.
Conditioning on the vectors $\x_1, \ldots, \x_n$,
the edges in the graph are deterministic.
The only source of randomness in $\cG(n, p, d)$ comes from the random points.
For this reason,
the random graph defined above is referred to as a \emph{hard}
random geometric graph.

More generally, the model above may be extended by
adding additional randomness to the edge generation process,
given the latent positions.
That is,
for any \emph{connection function}
$\phi: \RR \to [0, 1]$,
we may connect $i$ and $j$ with probability
$\phi(\inner{\x_i}{\x_j})$.
Formally,
let $i \sim j$ denote the event that there exists an undirected edge between
$i$ and $j$.
Then,
\begin{equation}\label{eq:edge}
\P(i \sim j \mid \x_1, \ldots, \x_n)
= \phi(\inner{\x_i}{\x_j}).
\end{equation}
This is equivalent to connecting $i$ and $j$ with probability
$\phi(1 - \norm{\x_i - \x_j}^2/2)$.
When the connection function is an indicator function
$\phi(x) = \ind{x \ge t_{p,d}}$,
we obtain the hard random geometric graph defined previously.
For general connection functions,
which are typically nondecreasing,\footnote{In the literature,
connection functions often take as their argument the distance
$\norm{\x_i - \x_j}$ and hence are a nonincreasing function.
Here it is more convenient to take the inner product $\inner{\x_i}{\x_j}$
as the argument of $\phi$ and hence this is a nondecreasing function.}
such random graphs are referred to as \emph{soft} random geometric graphs.

Denote by $\K = [k_{i,j}]$ the \emph{connection matrix}
defined by $k_{i,j} \coloneqq \phi(\inner{\x_i}{\x_j})$.
For a simple graph $G = (V, E)$, where $V$ is the set of vertices and
$E$ is the set of edges,
let $\A = [a_{i,j}]$ be its adjacency matrix,
where $a_{i,j} = 1$ if $i \sim j$ and $a_{i,j} = 0$ otherwise
for all $i,j \in V$.
Denote by $\X \in \RR^{n \times d}$ the matrix
whose rows are the random vectors $\x_1, \ldots, \x_n$.
For a soft random geometric graph with connection matrix $\K$,
conditioning on $\X$,
$\ind{i \sim j}$ is an independent Bernoulli random variable with
parameter $k_{i,j}$.
The distribution of the soft random geometric graph is then specified by
\begin{equation}\label{eq:P_srgg}
\P(G)
= \E_{\X}\biggl[\prod_{i < j}
{k_{i,j}}^{a_{i,j}}{(1 - k_{i,j})}^{1 - a_{i,j}}\biggr].
\end{equation}

We focus on a particular family of soft random geometric graphs
which are parametrized by the strength of the underlying geometry.
The connection function in these models is a linear interpolation between
a constant $p \in [0, 1]$ and a step function $s_t: \RR \to \{0, 1\}$ defined as
$s_t(x) := \ind{x \ge t}$.
Concretely, we consider the following connection function with parameter
$q \in [0, 1]$:
\begin{equation} \label{eq:link}
\phi_q(x) \coloneqq (1 - q) p + q s_t(x),
\end{equation}
where $q$ controls the strength of the geometry.
The threshold $t$ is similarly determined by forcing the edge probability
to be $p$:
\begin{equation*}
\P(i \sim j) = \E[\phi_q(\inner{\x_i}{\x_j})] = p,
\end{equation*}
which gives $t = t_{p,d}$ equal to the threshold in the definition
of $\cG(n, p, d)$.
We denote by $\cG(n, p, d, q)$ the soft random geometric graph equipped with
the connection function $\phi_q$.
When $q = 1$, the graph becomes the hard random geometric graph $\cG(n, p, d)$.
When $q = 0$,
each edge of the graph is generated independently
with probability $p$,
corresponding to the well-known \ER{} graph $\cG(n, p)$
that does not possess geometry.
As an illustration,
$\phi_q(x)$ is plotted against the connection functions of $\cG(n, p)$ and
$\cG(n, p, d)$ in Figure~\ref{fg:link}.
\begin{figure}[ht]
\begin{center}
\begin{tikzpicture}[scale=2]
\draw[->] (-2,0) -- (2,0) node[below right] {$x$};
\draw[->] (0,0) node[below] {$0$} -- (0,1.5) node[left] {$\phi(x)$};
\draw[blue,thick] (-2,0.4) node[below]
{$\phi_q(x)\coloneqq(1 - q) p + q s_t(x)$} -- (1,0.4);
\draw[blue,fill=white] (1,0.4) circle (0.03);
\draw[blue,thick] (1,0.8) -- (2,0.8);
\draw[blue,fill] (1,0.8) circle (0.03);
\draw[red,thick] (-2,0.6) node[above] {$p$} -- (2,0.6);
\draw[green,thick] (-2,0) -- (1,0);
\draw[green,fill=white] (1,0) circle (0.03);
\draw[green,thick] (1,1) -- (2,1) node[above]
{$s_{t}(x)$};
\draw[green,fill] (1,1) circle (0.03);
\draw[dashed] (1,0) node[below] {$t$} -- (1,1);
\draw[dashed] (0,1) node[left] {$1$} -- (1,1);
\end{tikzpicture}
\end{center}
\caption{A comparison of connection functions.} \label{fg:link}
\end{figure}
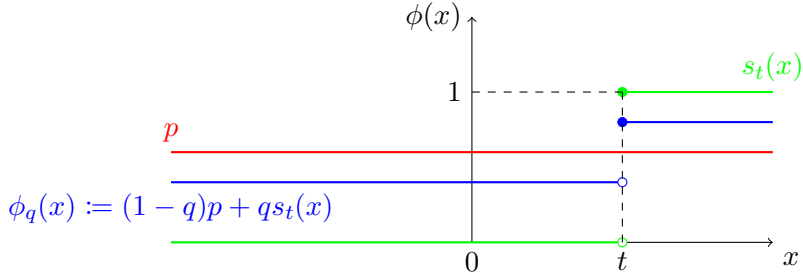

Another view of $\cG(n,p,d,q)$ is as a hard random geometric graph
with independently resampled edges.
Starting with a sample graph from $\cG(n,p,d)$,
for each pair of vertices in the graph,
we flip a biased coin with head probability $q$.
If the coin shows head,
we keep the edge/non-edge between them;
otherwise, we resample the connection,
creating an edge independently with probability $p$.

\subsection{Main results} \label{se:phase}
We are interested in whether it is possible to detect the underlying geometric
structure, which we formulate as the following hypothesis testing problem.
The null hypothesis is that the observed graph $G$ is
a sample from the \ER{} model with probability $p$:
\begin{equation*}
H_0: G \sim \cG(n, p).
\end{equation*}
The alternative hypothesis is that the graph is a soft random geometric graph
with dimension $d$ and geometry strength $q$:
\begin{equation*}
H_1: G \sim \cG(n, p, d, q).
\end{equation*}

The hypothesis testing problem can be understood through the guarantees for
the total variation distance between the two distributions.
Our findings are summarized in the following theorem.
\begin{theorem}[Detecting geometry]\label{th:det}
Let $p \in (0, 1)$ be fixed.
\begin{thenum}
\item\label{th:det_lower} (Impossibility)
If $n q \to 0$ or $n^3 q^2 / d \to 0$,
then
\begin{equation*}
\TV(\cG(n, p),\cG(n,p,d,q)) \to 0.
\end{equation*}
\item\label{th:det_upper} (Possibility)
If $n^3 q^6 / d \to \infty$, then
\begin{equation*}
\TV(\cG(n, p),\cG(n,p,d,q)) \to 1.
\end{equation*}
\end{thenum}
\end{theorem}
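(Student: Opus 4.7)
I split Theorem~\ref{th:det} into its two parts and use different techniques for each. For the impossibility result (a) in the case $nq\to 0$, the plan is to use sub-additivity of squared Hellinger distance on product measures. Conditional on the latent points $\X=(\x_1,\ldots,\x_n)$, both laws are products of independent Bernoullis: $\cG(n,p)$ uses parameter $p$ everywhere, while $\cG(n,p,d,q)$ uses $k_{ij}=\phi_q(\inner{\x_i}{\x_j})$; moreover $|k_{ij}-p|\le q\max(p,1-p)$. A standard per-edge Hellinger estimate gives $H^2(\mathrm{Bern}(k_{ij}),\mathrm{Bern}(p))=O(q^2)$, and sub-additivity across the $\binom{n}{2}$ edges yields $H=O(nq)$ almost surely for the conditional laws given $\X$. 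Averaging over $\X$ and invoking $\TV\le H$ gives $\TV(\cG(n,p,d,q),\cG(n,p))=O(nq)\to 0$.

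For the impossibility result in the case $n^3 q^2/d\to 0$, I would instead control the chi-squared divergence. Writing $T_{ij}=\ind{\inner{\x_i}{\x_j}\ge t_{p,d}}$ so that $k_{ij}-p=q(T_{ij}-p)$, a short computation gives
\begin{equation*}
\chi^2(\cG(n,p,d,q),\cG(n,p))+1
=\E_{\X,\X'}\!\biggl[\prod_{i<j}\Bigl(1+\tfrac{q^2}{p(1-p)}(T_{ij}-p)(T'_{ij}-p)\Bigr)\biggr],
\end{equation*}
with $\X'$ an independent copy of $\X$. Expanding the product yields a sum indexed by edge-subsets $S$ of $K_n$, and the tower property combined with $\E[T_{ij}-p\mid\x_i]=0$ forces every $S$ containing a degree-one vertex to contribute zero, so only subgraphs of minimum degree at least $2$ survive. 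Triangles dominate, contributing $\binom{n}{3}(q^2/(p(1-p)))^3\mu_3^2=\Theta(n^3 q^6/d)$, where $\mu_3=\E[(T_{12}-p)(T_{23}-p)(T_{13}-p)]=\Theta(1/\sqrt{d})$ follows from a Hermite expansion around the Gaussian limit of $\sqrt{d}\inner{\x_i}{\x_j}$. Larger min-degree-$2$ subgraphs are controlled by analogous moment estimates and are of strictly smaller order in the regime $n^3 q^2/d\to 0$ (which in particular forces $n^3 q^6/d\to 0$); hence $\chi^2\to 0$ and therefore $\TV\to 0$.

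For the possibility result (b), I adapt the signed triangle test of Bubeck, Ding, Eldan, and R\'acz. Let
\begin{equation*}
\tau(G)=\sum_{1\le i<j<k\le n}(a_{ij}-p)(a_{jk}-p)(a_{ik}-p).
\end{equation*}
Under $\cG(n,p)$, edge independence gives $\E[\tau]=0$ and $\Var[\tau]=\binom{n}{3}(p(1-p))^3=\Theta(n^3)$. Under $\cG(n,p,d,q)$, conditioning on $\X$ and using $\E[a_{ij}-p\mid\X]=q(T_{ij}-p)$ yields $\E[\tau]=\binom{n}{3}q^3\mu_3=\Theta(n^3 q^3/\sqrt{d})$. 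A case analysis on pairs of triangles, classified by their numbers of shared vertices and edges, together with the same inner-product moment estimates, gives $\Var[\tau]=O(n^3+n^4 q^2/d)$ under the alternative. When $n^3 q^6/d\to\infty$, the squared mean $\Theta(n^6 q^6/d)$ dominates both variance terms (the second reduces to $n^2 q^4\to\infty$, which follows from the hypothesis combined with $d\ge 1$). Chebyshev applied to a threshold test for $\tau$, with threshold placed halfway between the two means, then has vanishing type-I and type-II errors, so $\TV\to 1$.

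\textbf{Main obstacle.} The delicate step in both parts is the combinatorial and moment-theoretic bookkeeping needed to control $\E[\prod_{(i,j)\in S}(T_{ij}-p)]$ over arbitrary subgraphs $S$: one must argue that non-triangle subgraphs are negligible in the chi-squared expansion and, separately, that they contribute only lower-order terms to the variance in the possibility part. Both rely on the high-dimensional concentration of inner products on $\SS^{d-1}$---each ``extra edge'' in a subgraph effectively contributes a factor of order $1/\sqrt{d}$---combined with careful enumeration of subgraph embeddings into $[n]$.
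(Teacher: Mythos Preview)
Your plan for the $nq\to 0$ regime of part~(a) is correct and essentially the same as the paper's: the paper bounds $\KL(\cG(n,p)\Vert\cG(n,p,d,q))$ by Jensen and obtains $\binom{n}{2}q^2$, then applies Pinsker; your Hellinger argument is an equivalent variant. Likewise, your plan for part~(b) is exactly the paper's approach (signed triangles plus Chebyshev). One minor slip: the covariance of two signed triangles sharing an edge contributes $O(q^4/d)$, not $O(q^2/d)$, because after conditioning on $\X$ the shared edge gives $\E[(a_{12}-p)^2\mid\X]\le 1$ while each of the four non-shared edges contributes a factor $q$; so the variance under the alternative is $O(n^3+n^4q^4/d)$. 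Your weaker bound still suffices, but you should correct the exponent.

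The genuine gap is in the $n^3q^2/d\to 0$ regime of part~(a). Your $\chi^2$ expansion is correct, and the reduction to subgraphs of minimum degree $\ge 2$ is correct. But the claim that ``larger min-degree-$2$ subgraphs are of strictly smaller order'' is the whole difficulty, and your heuristic that each extra edge contributes $d^{-1/2}$ is neither precise nor proven. Concretely: the per-edge weight is $c_q=q^2/(p(1-p))$, which for $q$ near $1$ satisfies $c_q\ge 4$. To control, say, the contribution of $k$-cycles you would need $|\mu_{C_k}|$ to decay like $d^{-\alpha_k}$ with $\alpha_k$ growing in $k$; the paper itself only proves $|\mu_{C_k}|\le C_{k,p}/\sqrt d$ (via a total variation bound) and explicitly leaves the sharper decay as a conjecture (see the paper's Conjectures on signed cliques and cycles). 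With only the $1/\sqrt d$ bound, the $k$-cycle contribution to your $\chi^2$ is of order $(nq^2)^k/d$, which diverges in $k$ whenever $nq^2>1$---and this is perfectly compatible with $n^3q^2/d\to 0$ (take $q=1$, $d\gg n^3$). So the $\chi^2$ series is not known to even converge in the regime you need, let alone tend to zero.

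The paper avoids this combinatorics entirely. Its proof for the $n^3q^2/d$ regime couples $\cG(n,p,d,q)$ to a Gaussian matrix model: it writes the connection matrix through a (rescaled, thresholded) Wishart matrix, introduces the Bernoulli mask $\B$ for the $q$-mixture, and then bounds total variation by a triangle inequality against a GOE-type matrix. The key inputs are (i) a sharp estimate $|t_{p,d}\sqrt d-t_p|=O(1/d)$, and (ii) a bound on the expected negative log-determinant of an $\ell\times\ell$ Wishart block, $\E[-\log\det(\Z_\ell\Z_\ell^\top/d)]\le C\ell^2/d$, combined with the chain rule for KL. Summing over rows and taking expectation over $\B$ (so $\ell\sim\mathrm{Bin}(k,q)$) produces the $n^3q^2/d$ term. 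This route never needs moment bounds for large subgraphs. If you want to pursue the $\chi^2$ approach, you would have to prove uniform decay of $\mu_S$ over all min-degree-$2$ graphs $S$ (including sizes growing with $n$), which is a substantial open problem in this setting.
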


Theorem~\ref{th:det_lower} specifies a lower bound for detection:
If $n q \to 0$ or $d \gg n^{3} q^{2}$, then no test can detect geometry;
$\cG(n, p, d, q)$ is asymptotically indistinguishable from $\cG(n, p)$.
On the other hand,
Theorem~\ref{th:det_upper} provides an upper bound:
If $d \ll n^{3} q^{6}$,
then there exists an asymptotically powerful test for detecting geometry.
Specifically, we will show that the signed triangle statistic of
Bubeck et al.~\cite{bubeck2016testing} (which in particular is computationally
efficient) works in this regime to distinguish the two models.

Recall that $\cG(n, p, d, q)$ becomes $\cG(n, p, d)$ in the special case
when $q = 1$.
In this case Theorem~\ref{th:det} recovers the results of
Bubeck et al.~\cite{bubeck2016testing},
showing that $d \asymp n^{3}$ is the dimension threshold for losing geometry.

In general,
both the impossibility and possibility results in Theorem~\ref{th:det} depict
a polynomial dependency on~$q$.
However, the polynomials have different powers of $q$,
which means there exists a gap between the lower and upper bounds.
We believe that the phase transition for losing geometry in $\cG(n, p, d, q)$
happens at a certain power of $q$;
currently we do not have a good conjecture for the critical power.

In the impossibility statement,
the bound behaves differently in two regimes:
$d \gg n$ and $d \ll n$.
The result is a kink at $d \asymp n$ in the phase diagram
(see Figure~\ref{fg:diag}).
This may be due to the Wishart matrix being singular when $d < n$.

Theorem~\ref{th:det} can be displayed graphically by a phase diagram of
when geometry can be detected and when it cannot
in the space of dimension $d$ and geometry strength $q$.
We further introduce a more convenient parametrization
that allows us to visualize the phase diagram.
\begin{corollary}[Phase diagram]\label{co:diag}
Suppose that $d = n^\alpha$ and $q = n^{-\beta}$ for some $\alpha, \beta > 0$.
\begin{thenum}
\item\label{co:diag_upper} If $\beta > 1$ or $\alpha + 2\beta > 3$,
then as $n \to \infty$,
\begin{equation*}
\tv{\cG(n, p)}{\cG(n,p,d,q)} \to 0.
\end{equation*}
\item\label{co:diag_lower} If $\alpha + 6\beta < 3$, then as $n \to \infty$,
\begin{equation*}
\tv{\cG(n, p)}{\cG(n,p,d,q)} \to 1.
\end{equation*}
\end{thenum}
\end{corollary}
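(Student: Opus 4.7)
The plan is to obtain Corollary~\ref{co:diag} as a direct consequence of Theorem~\ref{th:det} by substituting the parametrization $d = n^\alpha$ and $q = n^{-\beta}$ into the three driving quantities $nq$, $n^3 q^2/d$, and $n^3 q^6/d$, and then translating the asymptotic conditions into linear inequalities in $\alpha, \beta$.

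First I would handle the impossibility part \ref{co:diag_upper}. Under the substitution, $nq = n^{1-\beta}$ and $n^3 q^2/d = n^{3 - 2\beta - \alpha}$. The first quantity tends to $0$ precisely when $\beta > 1$, and the second tends to $0$ precisely when $\alpha + 2\beta > 3$. In either case, Theorem~\ref{th:det_lower} applies and yields $\tv{\cG(n,p)}{\cG(n,p,d,q)} \to 0$.

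Next I would handle the possibility part \ref{co:diag_lower}. With the same substitution, $n^3 q^6/d = n^{3 - 6\beta - \alpha}$, which diverges to $\infty$ iff $\alpha + 6\beta < 3$. In this regime, Theorem~\ref{th:det_upper} immediately yields $\tv{\cG(n,p)}{\cG(n,p,d,q)} \to 1$.

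There is no real obstacle here: the corollary is a bookkeeping reformulation of Theorem~\ref{th:det} in the polynomial scaling regime, intended to make the phase diagram in the $(\alpha, \beta)$-plane transparent. The only thing worth being careful about is noting that the two impossibility conditions give rise to the two separate half-planes $\{\beta > 1\}$ and $\{\alpha + 2\beta > 3\}$, whose union forms the kinked boundary observed in Figure~\ref{fg:diag} and discussed after Theorem~\ref{th:det}.
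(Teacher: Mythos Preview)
Your proposal is correct and matches the paper's treatment: the corollary is presented there as an immediate reparametrization of Theorem~\ref{th:det}, with no separate proof given, and your substitution of $d = n^{\alpha}$, $q = n^{-\beta}$ into the quantities $nq$, $n^3q^2/d$, and $n^3q^6/d$ is exactly the intended derivation.
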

The resulting phase diagram is plotted in the two-dimensional space of $\alpha$
and $\beta$ in Figure~\ref{fg:diag}.
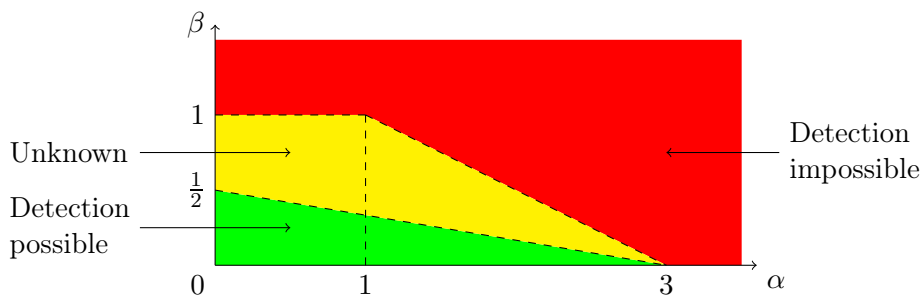
\begin{figure}[ht]
\begin{center}
\begin{tikzpicture}[scale=2]
\draw[fill=green,draw=none] (0, 0) -- (0,0.5) -- (3,0);
\draw[fill=red,draw=none] (0,1) -- (0,1.5) -- (3.5,1.5) -- (3.5,0) -- (3,0)
-- (1,1);
\draw[fill=yellow,draw=none] (0,0.5) -- (0,1) -- (1,1) -- (3,0);
\draw[->] node[below left] {$0$} (0,0) -- (3.6,0) node[below right] {$\alpha$};
\draw[->] (0,0) -- (0,1.6) node[left] {$\beta$};
\draw[dashed] (0,0.5) node[left] {$\frac{1}{2}$} -- (3,0) node[below] {$3$};
\draw[dashed] (1,1) -- (3,0);
\draw[dashed] (0,1) node[left] {$1$} -- (1,1);
\draw[dashed] (1,0) node[below] {$1$} -- (1,1);
\node[left,text width=1.6cm](a) at (-0.5,0.25) {Detection possible};
\node[left,text width=1.6cm](c) at (-0.5,0.75) {Unknown};
\node[right,text width=1.8cm](b) at (3.75,0.75) {Detection impossible};
\draw[->] (a) -- (0.5,0.25);
\draw[->] (c) -- (0.5,0.75);
\draw[->] (b) -- (3,0.75);
\end{tikzpicture}
\end{center}
\caption{Phase diagram for detecting geometry in the soft random geometric
graph $\cG(n,p,d,q)$.
Here $d = n^\alpha$ and $q = n^{-\beta}$ for some $\alpha, \beta > 0$.}
\label{fg:diag}
\end{figure}

\subsection{Related work}
The study of high-dimensional random geometric graphs originates in the work
of~\cite{devroye2011high},
who showed via the multivariate central limit theorem
that geometry is lost in high dimensions.
Subsequently, Bubeck et al.~\cite{bubeck2016testing} determined that
the phase transition of losing geometry
happens asymptotically at dimension $d \asymp n^3$
in the dense setting (for a fixed $p \in (0, 1)$).
This work also pointed out connections to classical random matrix ensembles,
showing that the Wishart to GOE transition also happens
at $d \asymp n^{3}$
(see also~\cite{jiang2013approximation,bubeck2016entropic,racz2019smooth,RB17,
chetelat2019middle}).
Eldan and Mikulincer~\cite{eldan2020information} further extended the results
to an anisotropic setting of the underlying distribution of points.
In the sparse setting, when $p$ vanishes as a function of $n$,
it is conjectured in \cite{bubeck2016testing} that geometry should be lost
at much lower dimensions.
Progress towards this conjecture, which in particular breaks the $n^3$ barrier,
was made by Brennan, Bresler, and Nagaraj~\cite{brennan2020phase}.
This is an active line of research;
after we finished this work,
we learned about a new preprint by Brennan, Bresler, and
Huang~\cite{brennan2021combinatorial} on understanding the Wishart to GOE
transition when only a subset of entries are revealed.

Soft random geometric graphs arise as natural models in many areas,
including
wireless communication \cite{gupta2000capacity}, social networks
\cite{hoff2002latent}, and biological networks \cite{tee2018phase}.
Penrose~\cite{penrose2016connectivity} studied the connectivity of soft random
geometric graphs from a modern probability-theoretic perspective,
determining the asymptotic probability of connectivity in fixed
dimensions and for a broad class of connection functions.
Dettmann and Georgiou~\cite{dettmann2016random} discussed the same questions
from a statistical physics viewpoint in two and three dimensions,
and provided a comprehensive list of connection functions
widely used in practice.
Connectivity in one-dimensional soft random geometric graphs was considered
in~\cite{wilsher2020connectivity},
where the authors showed that the reason for connectivity is vastly
different from the hard case.
Parthasarathy et al.~\cite{parthasarathy2017quest} studied a model of perturbed
networks, which is similar to the setting under consideration in our work.
A phase transition in soft random geometric graphs with a critical value of
chemical potential was demonstrated in \cite{ostilli2015statistical},
where a related model was also considered.
Our paper can be viewed as a first step towards understanding
the questions described in the previous paragraph
for soft random geometric graphs.

Following up detecting geometry, a natural next question is to recover it;
indeed, a line of research focuses on recovering the underlying latent positions
of soft random geometric graphs.
In \cite{sussman2013consistent},
it is shown that latent positions for random dot product graphs can be estimated
consistently using the eigendecomposition of the adjacency matrix,
when the dimension $d$ is fixed.
Several subsequent works \cite{araya2018relative,valdivia2019latent,
eldan2020community} applied similar approaches to kernels and general connection
functions on spheres satisfying certain eigengap conditions.

\subsection{Open problems}
The most immediate problem that our work leaves open is to understand
the intermediate region not covered by Theorem~\ref{th:det}.
Specifically, the main question is to determine the exact boundary between the
two phases where the limiting total variation distance transitions from $1$
to~$0$.
The existence of an intermediate phase where detection is
information-theoretically possible
while no efficient algorithm exists
is also worth studying.

More broadly, a natural direction of future research is to consider
these questions for other connection functions or underlying latent spaces,
in order to understand how the dimension threshold for
losing geometry depends on them.

\subsection{Outline of the paper}
The rest of the paper is organized as follows.
In Section~\ref{se:pre}, we introduce some notations used throughout the
paper and several standard definitions.
The impossibility of detection is presented afterwards in
Section~\ref{se:impsb},
where the two regimes of Theorem~\ref{th:det_lower} are proved.
Section~\ref{se:posb} consists of the proof of Theorem~\ref{th:det_upper}
using the signed triangle statistic.
Generalizations of signed triangles are also discussed.
Finally, in Section~\ref{se:rdpg}
we show that similar results also hold under a different distributional
assumption, when the underlying latent vectors are i.i.d.\ standard normal,
and we consider the corresponding random dot product graph.

\section{Notations and preliminaries} \label{se:pre}
We use boldface capitals to denote matrices, and their corresponding lower cases
with subscript indices separated by a comma to denote the entries.
For example, a matrix $\A = [a_{i,j}]$ has entry $a_{i,j}$ in its $i$th row and
$j$th column.
We use $\circ$ to denote the Hadamard product,
which is the entrywise product of two matrices.
Boldface lowercase letters represent vectors,
and their corresponding lower cases
with a subscript index denote the entries of the vector.
All vectors are treated as columns.
\begin{definition}[$f$-divergence] \label{de:fdiv}
Let $\cP$ and $\cQ$ be two probability measures on the same measurable space
$(\Omega, \cF)$.
Suppose that $\cP$ is absolutely continuous with respect to $\cQ$.
For a convex function $f$ such that $f(1) = 0$,
the $f$-divergence of $\cP$ and $\cQ$ is defined as
\begin{equation*}
\Df{\cP}{\cQ} \coloneqq \E_\cQ\biggl[f\biggl(\frac{d\cP}{d\cQ}\biggr)\biggr]
= \int_\Omega f\biggl(\frac{d\cP}{d\cQ}\biggr)\,d\cQ,
\end{equation*}
where $\frac{d\cP}{d\cQ}$ is the Radon--Nikodym derivative of $\cP$ with
respect to $\cQ$.
\end{definition}
In Definition~\ref{de:fdiv}, by choosing $f(t) = t\log t$,
we have the \emph{Kullback--Leibler (KL) divergence},
which we simply refer to as the \emph{divergence}.
Throughout the paper, $\log$ stands for the natural logarithm.
\begin{definition}[Kullback--Leibler (KL) divergence]
Let $\cP$ and $\cQ$ be two probability measures on the same measurable space
$(\Omega, \cF)$.
Suppose that $\cP$ is absolutely continuous with respect to $\cQ$.
The (KL) divergence of $\cP$ and $\cQ$ is defined as
\begin{equation*}
\kl{\cP}{\cQ}
\coloneqq \E_\cQ\biggl[\frac{d\cP}{d\cQ}\log\frac{d\cP}{d\cQ}\biggr]
= \int_\Omega \log\frac{d\cP}{d\cQ}\,d\cP.
\end{equation*}
\end{definition}
\begin{definition}[Total variation distance]
Let $\cP$ and $\cQ$ be two probability measures on a measurable space
$(\Omega, \cF)$.
The total variation distance between $\cP$ and $\cQ$ is defined as
\begin{equation*}
\tv{\cP}{\cQ} \coloneqq \sup_{A \in \cF} \abs{\cP(A) - \cQ(A)}.
\end{equation*}
\end{definition}
The total variation distance can also be viewed as an
$f$-divergence with $f(x) = \frac{1}{2}\abs{x - 1}$.
The total variation distance is simply referred to as the \emph{distance}
where no confusion is caused.
From the definition, it is clear that the distance between $\cP$ and $\cQ$ is
symmetric.
That is,
\begin{equation*}
\tv{\cP}{\cQ} = \tv{\cQ}{\cP}.
\end{equation*}

We use several inequalities concerning $f$-divergences in the proofs;
we state these later before they are applied.
The divergence and the distance are connected through Pinsker's inequality.
\begin{proposition}[Pinsker's inequality]\label{pr:pinsker}
For probability measures $\cP$ and $\cQ$, we have that
\begin{equation*}
\tv{\cP}{\cQ} \le \sqrt{\frac{1}{2} \kl{\cP}{\cQ}}.
\end{equation*}
\end{proposition}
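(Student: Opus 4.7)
The plan is to prove Pinsker's inequality by reducing it to the two-point Bernoulli case and then establishing a pointwise scalar inequality. First, by the variational characterization of total variation, there exists an event $A \in \cF$ (take $A = \{d\cP/d\cQ \ge 1\}$) such that $\tv{\cP}{\cQ} = \cP(A) - \cQ(A)$. Setting $p := \cP(A)$ and $q := \cQ(A)$, the log-sum inequality applied to the two-part partition $\{A, A^c\}$ yields
\begin{equation*}
\kl{\cP}{\cQ} \ge p \log\frac{p}{q} + (1-p) \log\frac{1-p}{1-q} =: k(p, q).
\end{equation*}
Equivalently, this is the data-processing inequality for KL divergence applied to the binary randomization $\omega \mapsto \ind{\omega \in A}$.

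With this reduction, it suffices to prove the scalar inequality $k(p,q) \ge 2(p-q)^2$ for all $p, q \in [0,1]$. I would fix $q \in (0,1)$ and define $g(p) := k(p,q) - 2(p-q)^2$ on $p \in (0,1)$. A direct computation gives
\begin{equation*}
g(q) = 0, \qquad g'(q) = 0, \qquad g''(p) = \frac{1}{p(1-p)} - 4 \ge 0,
\end{equation*}
where the last inequality uses $p(1-p) \le 1/4$. Hence $g$ is convex with a critical point at $p=q$ at which it vanishes, forcing $g \ge 0$ throughout $(0,1)$; the edge cases $p,q \in \{0,1\}$ extend by continuity with the convention $0 \log 0 = 0$ (when $q \in \{0,1\}$ with $p \neq q$ both sides of the target bound are infinite).

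Combining the two steps gives
\begin{equation*}
4\tv{\cP}{\cQ}^2 = 4(p-q)^2 \le 2 k(p,q) \le 2\kl{\cP}{\cQ},
\end{equation*}
which rearranges to the claimed bound. There is no genuinely hard step here: the only item deserving a brief justification is the reduction to the binary case, which may be cited as a direct consequence of the log-sum inequality or, equivalently, the data-processing inequality for $f$-divergences. The scalar step is a routine one-variable convexity argument.
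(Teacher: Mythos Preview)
The paper does not prove Pinsker's inequality; it merely states it as a standard preliminary (Proposition~\ref{pr:pinsker}) and later invokes it as a black box. There is therefore nothing in the paper to compare your argument against.

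Your proof is correct and is the classical one: reduce to the two-point case via the data-processing inequality for KL (equivalently, the log-sum inequality applied to the partition $\{A,A^c\}$ with $A=\{d\cP/d\cQ\ge 1\}$), and then verify the scalar bound $p\log\tfrac{p}{q}+(1-p)\log\tfrac{1-p}{1-q}\ge 2(p-q)^2$ by the convexity computation $g''(p)=\tfrac{1}{p(1-p)}-4\ge 0$. One small remark: the reduction step as you wrote it implicitly assumes $\cP\ll\cQ$ so that $d\cP/d\cQ$ exists and $A$ is well defined; when $\cP\not\ll\cQ$ the right-hand side is $+\infty$ by convention and the inequality is vacuous, so this is harmless but worth a sentence.
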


\section{Impossibility of detecting geometry} \label{se:impsb}
In this section we prove the impossibility results stated
in Theorem~\ref{th:det_lower}.
We start our discussion with some weaker bounds
obtained through a simple mixture argument,
before proceeding to the proof of the main impossibility results.

We may view the soft random geometric graph $\cG(n, p, d, q)$ as an
edge-wise mixture between $\cG(n, p)$ and $\cG(n,p,d)$, in the following way.
First, we draw two sample graphs $G_{1} \sim \cG(n, p)$
and $G_{2} \sim \cG(n, p, d)$.
We next construct a graph $G$ using $G_{1}$, $G_{2}$,
and additional coin flips.
Specifically, for every pair of distinct vertices $i$ and $j$,
we flip an independent biased coin which comes up heads with probability $q$.
If the coin flip is heads,
connect $i$ and $j$ with an edge in $G$ if and only if
they are connected with an edge in~$G_{2}$;
otherwise, connect $i$ and $j$ with an edge in $G$ if and only if
they are connected with an edge in~$G_{1}$.
This construction guarantees that $G \sim \cG(n,p,d,q)$.

We can obtain two simple bounds directly from this construction.
Bubeck et al.~\cite[Theorem~1(c)]{bubeck2016testing} showed that
$\cG(n, p)$ and $\cG(n,p,d)$ become indistinguishable when $d \gg n^3$.
Thus, as an edge-wise mixture of these two models,
$\cG(n,p,d,q)$ also cannot be distinguished from $\cG(n,p)$ in this regime.

Meanwhile, when $q \ll 1/n^{2}$, for any pair $(i,j)$
the probability that the connection between $i$ and $j$ is sampled from $G_{2}$
is $o(1/n^2)$.
This implies that the expected number of edges
that are sampled from $G_{2}$ is $o(1)$.
Therefore, by Markov's inequality, the probability that
there exists an edge which is sampled from $G_{2}$ is $o(1)$.
Hence, with probability $1-o(1)$
we have that $G = G_1$ in the construction above.
Therefore, $\cG(n, p)$ and $\cG(n,p,d,q)$ are indistinguishable
when $q \ll 1/n^{2}$.
These two arguments are summarized in the following claim.
\begin{claim} \label{cl:box}
If $n^3/d \to 0$ or $n^2q \to 0$, then
\begin{equation*}
\sup_{p \in [0,1]} \tv{\cG(n,p)}{\cG(n,p,d,q)} \to 0.
\end{equation*}
\end{claim}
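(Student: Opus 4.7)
The plan is to formalize the two arguments sketched in the paragraphs preceding the claim as two separate bounds, one for each regime.

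For the regime $n^3/d \to 0$, I will invoke the data processing inequality for total variation. Define a Markov kernel $N_q$ acting on graphs on $[n]$ as follows: given an input graph $H$, for each unordered pair $\{i, j\}$ independently, with probability $q$ keep the edge indicator of $H$, and with probability $1 - q$ replace it by an independent $\mathrm{Bernoulli}(p)$ draw. A direct computation of conditional edge probabilities given the latent points yields $N_q(\cG(n, p, d)) = \cG(n, p, d, q)$, since the resulting conditional probability of edge $\{i, j\}$ is $q\, s_t(\inner{\x_i}{\x_j}) + (1 - q) p = \phi_q(\inner{\x_i}{\x_j})$. Similarly $N_q(\cG(n, p)) = \cG(n, p)$, because resampling an i.i.d.\ $\mathrm{Bernoulli}(p)$ edge still gives $\mathrm{Bernoulli}(p)$. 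The data processing inequality then gives
\begin{equation*}
\tv{\cG(n, p)}{\cG(n, p, d, q)} \le \tv{\cG(n, p)}{\cG(n, p, d)},
\end{equation*}
and the right-hand side vanishes in this regime by the result of Bubeck et al.~\cite{bubeck2016testing} cited in the introduction.

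For the regime $n^2 q \to 0$, I will use a direct coupling. Sample $Y_{i, j}$ as i.i.d.\ $\mathrm{Bernoulli}(p)$, $B_{i, j}$ as i.i.d.\ $\mathrm{Bernoulli}(q)$, and $\x_1, \ldots, \x_n$ as i.i.d.\ uniform on $\SS^{d - 1}$, all mutually independent. Let $G_1$ be the graph with edge indicators $Y_{i, j}$, so $G_1 \sim \cG(n, p)$, and let $G$ be the graph with edge indicators $B_{i, j}\, s_t(\inner{\x_i}{\x_j}) + (1 - B_{i, j})\, Y_{i, j}$, so $G \sim \cG(n, p, d, q)$ by the same mixture calculation. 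The event $\{G \neq G_1\}$ is contained in $\{\exists\, i < j : B_{i, j} = 1\}$, hence
\begin{equation*}
\tv{\cG(n, p)}{\cG(n, p, d, q)} \le \P(G \neq G_1) \le \binom{n}{2} q = O(n^2 q) \to 0.
\end{equation*}

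Both bounds are uniform in $p$, which yields the supremum statement. The main obstacle is cosmetic: one must confirm that the cited hard-model result holds uniformly in $p \in [0, 1]$. At the endpoints $p \in \{0, 1\}$ both distributions are deterministic, and in the interior the moment-based argument in~\cite{bubeck2016testing} does not depend on $p$ in any essential way, so this causes no real difficulty.
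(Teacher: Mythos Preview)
Your proposal is correct and follows essentially the same approach as the paper. The paper's argument is sketched informally in the paragraphs preceding the claim: for $n^3/d \to 0$ it observes that $\cG(n,p,d,q)$ is an edge-wise mixture of $\cG(n,p)$ and $\cG(n,p,d)$ and invokes the result of Bubeck et al., while for $n^2 q \to 0$ it uses the same coupling and union bound on the event that any resampling coin comes up heads. Your write-up is slightly more explicit---you name the Markov kernel $N_q$ and invoke the data processing inequality by name, whereas the paper leaves the contraction implicit in the phrase ``as an edge-wise mixture''---but the underlying ideas coincide.
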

Our main result in Theorem~\ref{th:det_lower} improves on Claim~\ref{cl:box}
by relaxing both of the conditions and thus proving the impossibility result for
a larger parameter regime.
First, notice that the condition $n^{3}/d \to 0$
does not take $q$ into consideration at all;
we improve this to the condition $n^{3} q^{2} / d \to 0$.
We also improve the condition $n^2 q \to 0$ to $n q \to 0$.

As before, if we set $d = n^\alpha$ and $q = 1/n^{\beta}$
for some $\alpha, \beta > 0$,
then Claim~\ref{cl:box} translates into the following corollary picturing
a region of a phase diagram.
\begin{corollary}
Suppose that $d = n^\alpha$ and $q = 1/n^{\beta}$ for some $\alpha, \beta > 0$.
If $\alpha > 3$ or $\beta > 2$, then
\begin{equation*}
\sup_{p \in [0,1]} \tv{\cG(n,p)}{\cG(n,p,d,q)} \to 0.
\end{equation*}
\end{corollary}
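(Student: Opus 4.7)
The plan is to observe that this corollary is nothing more than a translation of Claim~\ref{cl:box} into the power-law parametrization $d = n^{\alpha}$, $q = 1/n^{\beta}$. Under this parametrization, each of the two disjunctive hypotheses of the claim becomes a strict inequality between the exponents $\alpha$, $\beta$ and the constants $3$, $2$ respectively, so the proof reduces to checking which disjunct one is in.

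Concretely, I would split into two cases according to the hypothesis of the corollary. In the first case, suppose $\alpha > 3$. Substituting $d = n^{\alpha}$, we have $n^{3}/d = n^{3-\alpha}$, and since the exponent $3 - \alpha$ is strictly negative this quantity tends to $0$ as $n \to \infty$. The first disjunct of Claim~\ref{cl:box} therefore applies, yielding $\sup_{p \in [0,1]} \tv{\cG(n,p)}{\cG(n,p,d,q)} \to 0$. In the second case, suppose instead $\beta > 2$. Substituting $q = n^{-\beta}$, we get $n^{2} q = n^{2-\beta}$, which tends to $0$ by the same reasoning. Now the second disjunct of Claim~\ref{cl:box} gives the desired conclusion.

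There is no genuine obstacle: the corollary is a purely notational restatement of Claim~\ref{cl:box}, and the verification is two one-line exponent comparisons. The reason for recording this corollary is presentational, namely to pin down the region of the $(\alpha, \beta)$-phase diagram that is already covered by the simple edge-wise mixture argument preceding the claim; this region is the set $\{\alpha > 3\} \cup \{\beta > 2\}$, which is strictly smaller than the impossibility region promised by Theorem~\ref{th:det_lower} (namely $\{\alpha + 2\beta > 3\} \cup \{\beta > 1\}$ after reparametrization).
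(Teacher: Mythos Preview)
Your proposal is correct and matches the paper's approach exactly: the paper states the corollary immediately after Claim~\ref{cl:box} with the single remark that it ``translates'' the claim into the power-law parametrization, and your two exponent checks are precisely that translation.
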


\subsection{Impossibility of detection under large noise} \label{se:weak}
In this subsection we show that $\cG(n, p)$ and $\cG(n, p, d, q)$ are
asymptotically indistinguishable when $nq \to 0$,
thus proving Theorem~\ref{th:det_lower} under this regime.
This shows that when the noise is large enough (i.e., $q$ is small enough),
detecting geometry becomes impossible, regardless of the dimensionality.

For a graph $G$ with adjacency matrix $\A = [a_{i,j}]$,
the density $\P_{\cG(n, p, d, q)}(G)$ is given in \eqref{eq:P_srgg} with
$k_{i,j} = \phi_q(\inner{\x_i}{\x_j})$.
We can write the density of $\cG(n, p)$ similarly:
\begin{equation}\label{eq:gnp}
\P_{\cG(n, p)}(G) = \prod_{i < j} p^{a_{i,j}}{(1-p)}^{1-a_{i,j}}.
\end{equation}
The divergence of $\cG(n, p)$ and $\cG(n, p, d, q)$ can then be written as
\begin{equation*}
\begin{split}
\kl{\cG(n, p)}{\cG(n, p, d, q)}
&= \E_{\cG(n, p)}
\biggl[-\log \frac{\P_{\cG(n, p, d, q)}(G)}{\P_{\cG(n, p)}(G)}\biggr]\\
&= \E_{\A} \biggl[-\log \E_{\X}\biggl[\prod_{i < j}
{\biggl(\frac{k_{i,j}}{p}\biggr)}^{a_{i,j}}
{\biggl(\frac{1 - k_{i,j}}{1 - p}\biggr)}^{1 - a_{i,j}}\biggr]\biggr],
\end{split}
\end{equation*}
where the $a_{i,j}$'s are independent Bernoulli random variables
with parameter $p$
since the expectation is taken under $G \sim \cG(n,p)$.
Since $-\log$ is convex, by Jensen's inequality we have that
\begin{equation} \label{eq:KL_Jensen}
\begin{split}
\kl{\cG(n, p)}{\cG(n, p, d, q)}
&\le \E_{\A,\X} \biggl[-\log \prod_{i < j}
{\biggl(\frac{k_{i,j}}{p}\biggr)}^{a_{i,j}}
{\biggl(\frac{1 - k_{i,j}}{1 - p}\biggr)}^{1 - a_{i,j}}\biggr]\\
&= \E_{\A,\X} \biggl[- \sum_{i < j} \biggl(a_{i,j} \log
\frac{k_{i,j}}{p} + (1 - a_{i,j}) \log \frac{1 - k_{i,j}}{1 - p}\biggr)\biggr]\\
&= -\sum_{i < j} \biggl(\E_{\A}[a_{i,j}] \E_{\X} \biggl[\log
\frac{k_{i,j}}{p}\biggr] + \E_{\A}[1 - a_{i,j}] \E_{\X}
\biggl[\log \frac{1 - k_{i,j}}{1 - p}\biggr]\biggr),
\end{split}
\end{equation}
where the last line is due to linearity of expectation and independence.

Since $a_{i,j} \sim \mathrm{Bern}(p)$,
we have $\E[a_{i,j}] = p$ and $\E[1 - a_{i,j}] = 1 - p$.
By the definition of the connection function in \eqref{eq:edge}
and~\eqref{eq:link}, we have that
\begin{equation}
k_{i,j} = \phi_q(\inner{\x_i}{\x_j}) = \begin{cases}
(1 - q) p + q &\text{if } \inner{\x_i}{\x_j} \ge t_{p,d},\\
(1 - q) p &\text{otherwise}.
\end{cases}
\end{equation}
Recall that $t_{p,d}$ is chosen such that
$\P(\inner{\x_i}{\x_j} \ge t_{p,d}) = p$.
Hence, the marginal distribution of $k_{i,j}$ satisfies
$k_{i,j} = (1 - q) p + q$ with probability $p$
and $k_{i,j} = (1 - q) p$ with probability $1 - p$.
Therefore, we have that
\begin{equation*}
\E_{\X} \biggl[\log \frac{k_{i,j}}{p}\biggr]
= p \log \biggl(1 + \frac{1 - p}{p} q\biggr) + (1 - p) \log (1 - q)
\end{equation*}
and
\begin{equation*}
\E_{\X} \biggl[\log \frac{1 - k_{i,j}}{1 - p}\biggr]
= p \log (1 - q) + (1 - p) \log \biggl(1 + \frac{p}{1 - p} q\biggr).
\end{equation*}
By the elementary inequality $\log (1 + x) \ge x - x^2$ for $x \ge -1/2$,
we obtain that for $0 \le q \le 1/2$,
\begin{equation}
\E_{\X} \biggl[\log \frac{k_{i,j}}{p}\biggr] \ge -\frac{1-p}{p}q^2
\quad \mathrm{and} \quad
\E_{\X} \biggl[\log \frac{1 - k_{i,j}}{1 - p}\biggr] \ge -\frac{p}{1-p}q^2.
\end{equation}
Inserting the above estimates into \eqref{eq:KL_Jensen},
we conclude that for $0 \le q \le 1/2$,
\begin{equation}\label{eq:kl_up_a1}
\kl{\cG(n, p)}{\cG(n, p, d, q)}
\le \binom{n}{2}q^2
\le \frac{1}{2}n^2q^2.
\end{equation}

To be consistent with the main discussion,
we turn this upper bound on the divergence into an upper bound on the distance.
An application of Pinsker's inequality (Proposition~\ref{pr:pinsker})
combined with \eqref{eq:kl_up_a1} proves the following theorem.
\begin{theorem}\label{th:upper_a1}
For $0 \le q \le 1/2$ we have that
\begin{equation}
\sup_{p \in [0,1]} \tv{\cG(n, p)}{\cG(n, p, d, q)}
\le \frac{1}{2} n q.
\end{equation}
\end{theorem}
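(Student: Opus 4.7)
The plan is essentially immediate given the calculations already carried out in this subsection: combine the KL divergence bound of equation~\eqref{eq:kl_up_a1} with Pinsker's inequality (Proposition~\ref{pr:pinsker}). There is no new idea required beyond verifying that the preceding KL bound is uniform in $p$ and then taking square roots.

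First, I would observe that the chain of inequalities culminating in $\kl{\cG(n,p)}{\cG(n,p,d,q)} \le \tfrac{1}{2} n^{2} q^{2}$ is valid for every $p \in [0,1]$ and every $0 \le q \le 1/2$: no step in~\eqref{eq:KL_Jensen} through~\eqref{eq:kl_up_a1} imposed any restriction on $p$, and the cross terms $\frac{1-p}{p} \cdot p \cdot q^{2}$ and $\frac{p}{1-p} \cdot (1-p) \cdot q^{2}$ produced by the quadratic lower bound on $\log(1+x)$ each contribute $q^{2}$, independently of $p$. Second, applying Pinsker's inequality pointwise in $p$ gives
\begin{equation*}
\tv{\cG(n, p)}{\cG(n, p, d, q)}
\le \sqrt{\tfrac{1}{2}\kl{\cG(n, p)}{\cG(n, p, d, q)}}
\le \sqrt{\tfrac{1}{4} n^{2} q^{2}}
= \tfrac{1}{2} n q.
\end{equation*}
Since the right-hand side does not depend on $p$, taking the supremum over $p \in [0,1]$ on the left yields the stated bound.

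The main obstacle is essentially nonexistent at this point: the real work is contained in the derivation of~\eqref{eq:kl_up_a1}, which required passing the logarithm inside the inner expectation over $\X$ via Jensen's inequality, identifying the two-valued marginal distribution of $k_{i,j}$ (taking value $(1-q)p + q$ with probability $p$ and $(1-q)p$ with probability $1-p$), and invoking the elementary lower bound $\log(1+x) \ge x - x^{2}$ valid for $x \ge -1/2$ (which is why the hypothesis $q \le 1/2$ enters). Once that quadratic KL bound is in hand, the theorem follows from a single application of Pinsker.
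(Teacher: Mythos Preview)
Your proposal is correct and follows exactly the paper's approach: the paper's entire proof is the single sentence ``An application of Pinsker's inequality (Proposition~\ref{pr:pinsker}) combined with~\eqref{eq:kl_up_a1} proves the following theorem.'' One minor imprecision: the two terms you describe do not ``each contribute $q^{2}$'' but rather $(1-p)q^{2}$ and $pq^{2}$, which sum to $q^{2}$; also, the derivation of~\eqref{eq:kl_up_a1} tacitly assumes $p\in(0,1)$ (to avoid dividing by zero in $\frac{1-p}{p}$ and $\frac{p}{1-p}$), but the endpoints $p\in\{0,1\}$ are trivial since both models are then deterministic and identical.
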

The $nq \to 0$ regime of Theorem~\ref{th:det_lower} directly follows from
Theorem~\ref{th:upper_a1}.

\subsection{Impossibility of detecting weak high-dimensional geometry}
\label{se:impo}
In this subsection we show that $\cG(n, p)$ and $\cG(n, p, d, q)$ are
asymptotically indistinguishable when $n^{3} q^{2} / d \to 0$,
proving Theorem~\ref{th:det_lower} under this regime.
This result thus highlights the interplay between noise and dimensionality
in determining when it is possible to detect geometry.

In order to capture this interplay between noise and dimensionality,
we use several inequalities concerning $f$-divergences,
and we start by recalling these.
The distance (divergence) between two random variables is understood as
the distance (divergence) between their corresponding probability measures.
Since we focus our attention on random graphs without self-loops,
the diagonal entries of real symmetric matrices are usually set to zero unless
specified.
For the distance (divergence) between two real symmetric random matrices,
only the lower triangular part is considered.
We also place notations for distributions inside the operators $\E, \Var$
to denote a sample from the corresponding distribution.

\begin{proposition}[Conditioning increases divergence] \label{pr:cond_div}
Let $\cP_X$ and $\cQ_X$ be two probability measures.
Let $Y$ be a random variable on the same space and denote by $\cP_{X \mid Y}$
and $\cQ_{X \mid Y}$ the conditional laws.
Then, the $f$-divergence satisfies
\begin{equation*}
\Df{\cP_X}{\cQ_X} \le \E_Y \Df{\cP_{X \mid Y}}{\cQ_{X \mid Y}}.
\end{equation*}
\end{proposition}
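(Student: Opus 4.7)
The plan is to reduce the proposition to Jensen's inequality applied to the convex function $f$. Write $\cP_{X,Y}$ and $\cQ_{X,Y}$ for the joint laws compatible with the marginals and the conditionals described in the statement, and assume (as seems implicit) a common $Y$-marginal $\mu_Y$ under which the outer expectation is taken. Letting $p$ and $q$ denote joint densities with respect to a common reference measure, disintegration gives $p(x,y) = p_{X\mid Y}(x\mid y)\,r(y)$ and $q(x,y) = q_{X\mid Y}(x\mid y)\,r(y)$, where $r$ is the density of $\mu_Y$. So the joint density ratio equals the conditional density ratio $p_{X\mid Y}(x\mid y)/q_{X\mid Y}(x\mid y)$.

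The core computation is to express the marginal density ratio as an average of conditional density ratios against a probability measure to which Jensen applies. For each $x$ with $q_X(x)>0$, let $\nu_x$ be the probability measure on $Y$-space with density proportional to $q_{X\mid Y}(x\mid \cdot)$ against $\mu_Y$; then
\[
\frac{p_X(x)}{q_X(x)} \;=\; \int \frac{p_{X\mid Y}(x\mid y)}{q_{X\mid Y}(x\mid y)}\, d\nu_x(y).
\]
Convexity of $f$ and Jensen's inequality for $\nu_x$ give the pointwise bound
\[
f\!\left(\frac{p_X(x)}{q_X(x)}\right) \;\le\; \int f\!\left(\frac{p_{X\mid Y}(x\mid y)}{q_{X\mid Y}(x\mid y)}\right) d\nu_x(y).
\]
Multiplying by $q_X(x)$, integrating in $x$, unfolding the definition of $\nu_x$, and swapping the order of integration via Fubini yields $\Df{\cP_X}{\cQ_X}$ on the left and $\E_Y \Df{\cP_{X\mid Y}}{\cQ_{X\mid Y}}$ on the right.

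An equivalent route that I may prefer for transparency is to chain two standard facts. First, the data processing inequality $\Df{\cP_X}{\cQ_X} \le \Df{\cP_{X,Y}}{\cQ_{X,Y}}$, which itself follows from a conditional Jensen applied to $\E_\cQ[d\cP_{X,Y}/d\cQ_{X,Y} \mid X]$ together with the tower property. Second, under the assumption that the $Y$-marginals agree, the disintegration identity $\Df{\cP_{X,Y}}{\cQ_{X,Y}} = \E_Y \Df{\cP_{X\mid Y}}{\cQ_{X\mid Y}}$ follows from Fubini once the joint density ratio is rewritten as the conditional density ratio as above. Composing the inequality with the identity yields the claim.

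The only real obstacle is bookkeeping rather than ideas: picking a common dominating measure, handling the null set where $q_X$ vanishes (on which the absolute continuity assumption forces $p_X$ to vanish as well), and checking that the stated absolute continuity descends to $\cP_{X\mid Y} \ll \cQ_{X\mid Y}$ for $\mu_Y$-almost every $y$. These are standard measure-theoretic technicalities, and all the substantive work is done by the convexity of $f$ via Jensen.
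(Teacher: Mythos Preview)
Your proof is correct. The paper does not actually prove this proposition; it merely states it and cites Polyanskiy--Wu's lecture notes (Theorem~2.2(5) and Remark~4.2) as a standard reference. Your direct Jensen argument---writing the marginal likelihood ratio as a $\nu_x$-average of conditional likelihood ratios and applying convexity of $f$ pointwise before integrating---is exactly the textbook proof, and your alternative route via data processing plus the disintegration identity under a common $Y$-marginal is equally standard. Both are fine; there is nothing to compare against in the paper itself beyond the citation.
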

Proposition~\ref{pr:cond_div} is usually referred to as
``conditioning increases divergence'' in standard texts
(e.g., Theorem~2.2(5) and Remark~4.2 in \cite{polyanskiy2019it}).
Following the convention widely adopted in the information theory community,
we write $\kl{\cP_{X \mid Y}}{\cQ_{X \mid Y} \mid \cP_Y}
\coloneqq \E_Y \kl{\cP_{X \mid Y}}{\cQ_{X \mid Y}}$
and call it the \emph{conditional divergence}.

\begin{proposition}[Data processing inequality] \label{pr:dp}
For two probability distributions $\cP_X$ and $\cQ_X$,
consider the joint distributions $\cP_{X, Y} = \cP_{Y \mid X} \cP_X$ and
$\cQ_{X, Y} = \cP_{Y \mid X} \cQ_X$ with the same conditional law
$\cP_{Y \mid X}$.
Then, the $f$-divergence of the marginal distributions
$\cP_Y \coloneqq \E_{\cP_X}[\cP_{Y \mid X}]$ and
$\cQ_Y \coloneqq \E_{\cQ_X}[\cP_{Y \mid X}]$ satisfies
\begin{equation*}
\Df{\cP_Y}{\cQ_Y} \le \Df{\cP_X}{\cQ_X}.
\end{equation*}
\end{proposition}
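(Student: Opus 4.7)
The plan is to derive the data processing inequality as a direct consequence of the convexity of $f$ via Jensen's inequality, exploiting the fact that the two joint distributions share the same conditional kernel $\cP_{Y \mid X}$. The clean starting point is the observation that under this shared-kernel assumption, the Radon--Nikodym derivative of the joints depends only on the $X$-coordinate: writing the joint laws as $d\cP_{X,Y} = \cP_{Y \mid X}(dy \mid x)\,d\cP_X(x)$ and $d\cQ_{X,Y} = \cP_{Y \mid X}(dy \mid x)\,d\cQ_X(x)$, one has
\[ \frac{d\cP_{X,Y}}{d\cQ_{X,Y}}(x,y) = \frac{d\cP_X}{d\cQ_X}(x). \]

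First, I would express the marginal likelihood ratio $d\cP_Y/d\cQ_Y$ as a conditional expectation by integrating out $X$ under $\cQ$. Testing against any bounded measurable $g$ on the $Y$-space,
\[ \int g(y)\,d\cP_Y(y) = \int g(y)\,\frac{d\cP_X}{d\cQ_X}(x)\,d\cQ_{X,Y}(x,y) = \int g(y)\,\E_{\cQ_{X \mid Y=y}}\biggl[\frac{d\cP_X}{d\cQ_X}\biggr]\,d\cQ_Y(y), \]
so $d\cP_Y/d\cQ_Y(y) = \E_{\cQ_{X \mid Y=y}}[d\cP_X/d\cQ_X]$. Next, I would plug this identity into the definition of the $f$-divergence, apply Jensen's inequality to the convex function $f$ on the inner conditional expectation, and collapse the iterated expectation by the tower property:
\[ \Df{\cP_Y}{\cQ_Y} = \E_{\cQ_Y}\biggl[f\biggl(\E_{\cQ_{X \mid Y}}\biggl[\frac{d\cP_X}{d\cQ_X}\biggr]\biggr)\biggr] \le \E_{\cQ_Y}\E_{\cQ_{X \mid Y}}\biggl[f\biggl(\frac{d\cP_X}{d\cQ_X}\biggr)\biggr] = \Df{\cP_X}{\cQ_X}. \]

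The main obstacle is not conceptual but technical: justifying the conditional Radon--Nikodym identity cleanly in an abstract setting requires the existence and essential uniqueness of a regular conditional distribution $\cQ_{X \mid Y}$ and a disintegration of $\cQ_{X,Y}$ along $Y$, which is standard on Polish spaces and trivializes in the discrete case to an elementary swap of summation. One should also note at the outset that absolute continuity $\cP_Y \ll \cQ_Y$ is inherited automatically from $\cP_X \ll \cQ_X$ together with the shared conditional kernel, so no separate justification for this is needed. A convenient alternative packaging is to first prove the conditioning inequality (Proposition~\ref{pr:cond_div}) applied to $X$ given $Y$ and then combine it with the chain rule for $f$-divergences, but the direct Jensen argument above is the shortest route.
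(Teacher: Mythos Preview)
Your proof is correct and follows exactly the standard Jensen's inequality argument. The paper does not supply its own proof of this proposition; it simply cites the Jensen-based proof in standard references (Polyanskiy--Wu), which is precisely the route you have written out.
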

A simple proof of Proposition~\ref{pr:dp} using Jensen's inequality can be found
in most texts (see, e.g., \cite[Theorem~6.2]{polyanskiy2019it}).
For a measurable function $g \colon E \to F$,
by choosing $\cP_{Y \mid X} (y \mid x) = \ind{y=g(x)}$,
we have the following corollary.
\begin{corollary}[Data processing inequality] \label{co:dp}
Let $X, Y \in E$ be two random variables
and let $g: E \to F$ be a measurable function.
Then, the $f$-divergence of the pushforward measures satisfies
\begin{equation*}
\Df{g(X)}{g(Y)} \le \Df{X}{Y}.
\end{equation*}
\end{corollary}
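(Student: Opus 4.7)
The plan is to derive Corollary~\ref{co:dp} as a direct specialization of Proposition~\ref{pr:dp}, following the hint given right before the statement: specialize the conditional law to the deterministic kernel induced by $g$. To match the notation of Proposition~\ref{pr:dp}, denote the laws of the two random variables in the corollary by $\cP_X$ and $\cQ_X$ on $E$, and define the conditional law $\cP_{Y \mid X}$ on $F$ by $\cP_{Y \mid X}(A \mid x) \coloneqq \ind{g(x) \in A}$ for every measurable $A \subseteq F$. Measurability of $g$ ensures this is a valid Markov kernel from $E$ to $F$, so the setup of Proposition~\ref{pr:dp} applies with no additional assumptions.

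The second step is to observe that, under this choice, the two marginals in Proposition~\ref{pr:dp} coincide with the pushforward measures appearing in the corollary. Unwinding the definition,
\begin{equation*}
\cP_Y(A)
= \E_{\cP_X}\bigl[\cP_{Y \mid X}(A \mid X)\bigr]
= \cP_X\bigl(g^{-1}(A)\bigr),
\end{equation*}
so $\cP_Y$ is the law of $g(X)$; symmetrically, $\cQ_Y$ is the law of $g(Y)$. Substituting these identifications into the conclusion of Proposition~\ref{pr:dp} yields
\begin{equation*}
\Df{g(X)}{g(Y)} = \Df{\cP_Y}{\cQ_Y} \le \Df{\cP_X}{\cQ_X} = \Df{X}{Y},
\end{equation*}
which is exactly the statement of the corollary.

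There is no real obstacle here: the only ingredient beyond Proposition~\ref{pr:dp} is verifying that $x \mapsto \ind{g(x) \in A}$ is measurable for each measurable $A$, which is immediate from the measurability of $g$. No extra integrability or absolute continuity assumption is required beyond what is already implicit in the definition of $\Df{\cdot}{\cdot}$, since a Dirac kernel integrates any measurable function by evaluation at the single point $g(x)$. In short, the corollary is essentially a bookkeeping step once the general data processing inequality is in hand.
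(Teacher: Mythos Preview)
Your proof is correct and follows exactly the approach the paper uses: the paper derives the corollary in one line by specializing Proposition~\ref{pr:dp} to the deterministic kernel $\cP_{Y\mid X}(y\mid x)=\ind{y=g(x)}$, and your argument is simply a more detailed unpacking of that same specialization.
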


With these preliminaries in place, we now turn to our question of interest.
Let $\Z \in \RR^{n \times d}$ be a random matrix with
independent standard normal entries.
Then,
$\W \coloneqq \Z \Z^\top \in \RR^{n \times n}$ has
a Wishart distribution $\cW_n(\I, d)$.
Let $\Y$ contain the off-diagonals of $\W$ scaled
by the square root of the dimension,
that is, $y_{i,j} \coloneqq w_{i,j}/\sqrt{d}$.
By the central limit theorem, $y_{i,j}$ converges to
a standard normal random variable as $d \to \infty$.
Let $\V \coloneqq \frac{1}{d}\diag(\W)$ consist of
the scaled diagonals;
by the law of large numbers,
each nonzero entry $v_{i,i}$ converges to $1$.
For compactness of presentation,
denote by~$\v$ the vectorized diagonal of $\V$,
that is, $v_i \coloneqq w_{i,i}/d$.
Let $\M = [m_{i,j}]$ be a zero-diagonal symmetric random matrix with
off-diagonal entries $m_{i,j}$ following independent standard normal
distributions for $1 \le i < j \le n$.
Let $\M'$ and $\M''$ be two independent copies of~$\M$.

A standard method to create uniform random vectors on the sphere
(which goes back to~\cite{marsaglia1972choosing} and before) is as follows:
If $\z$ is a standard normal vector,
then $\hat{\z} \coloneqq \z / \norm{\z}$ is uniformly distributed
in $\SS^{d-1}$.
Thus, we can create the random vectors $\{ \x_i \}_{i=1}^{n}$
using $\{ \z_i \}_{i=1}^{n}$.
With the random matrices defined above,
the inner product of $\x_i$ and $\x_j$ can be expressed as
\begin{equation*}
\inner{\x_i}{\x_j}
= \inner{\hat{\z}_i}{\hat{\z}_j}
= \frac{\inner{\z_i}{\z_j}}{\norm{\z_i}\norm{\z_j}}
= \frac{w_{i,j}}{\sqrt{w_{i,i} w_{j,j}}}
= \frac{1}{\sqrt{d}}\frac{y_{i,j}}{\sqrt{v_i v_j}}
= \frac{1}{\sqrt{d}}(\V^{-1/2} \Y \V^{-1/2})_{i,j}.
\end{equation*}

Define the step function $s_{t_p}(x) := \ind{x \ge t_{p}}$,
where $t_p$ is determined by $\E[s_{t_p}(\cN(0, 1))] = p$.
That is, if we denote the cumulative distribution function of
the standard normal distribution by~$\Phi$,
then $t_p = \Phi^{-1}(1 - p)$.
In what follows,
when we apply a univariate function to a matrix,
it is applied entrywise,
resulting in a matrix of the same shape.
With this convention, let
\begin{equation*}
\bP \coloneqq s_{t_p}(\M')
\end{equation*}
and
\begin{equation*}
\bQ \coloneqq (1 - q) s_{t_p}(\M)
+ q s_{t_{p,d}}(\X\X^\top) = (1 - q) s_{t_p}(\M)
+ q s_{t_{p,d}\sqrt{d}}(\V^{-1/2} \Y \V^{-1/2}).
\end{equation*}

By the definition of $\cG(n, p)$ and the independence of entries of $\M'$,
\begin{equation} \label{eq:gnp_P}
\P_{\cG(n, p)}(G) = \prod_{i < j} p^{a_{i,j}} (1 - p)^{1 - a_{i,j}}
= \E_{\bP}\biggl[\prod_{i < j} p_{i,j}^{a_{i,j}}
(1 - p_{i,j})^{1 - a_{i,j}}\biggr],
\end{equation}
where we use the standard convention that $0^0 = 1$.
By \eqref{eq:link} and the definition of $m_{i,j}$, we can write
\begin{equation*}
k_{i,j}
= (1 - q) \E[s_{t_p}(m_{i,j})] + q s_{t_{p,d}}(\inner{\x_i}{\x_j})
= \E[q_{i,j} \mid \x_i, \x_j].
\end{equation*}
Then, by \eqref{eq:P_srgg} and the independence of the $m_{i,j}$,
\begin{equation}
\P_{\cG(n, p, d, q)}(G) = \E_{\bQ} \biggl[\prod_{i < j} q_{i,j}^{a_{i,j}}
(1 - q_{i,j})^{1 - a_{i,j}}\biggr].
\end{equation}
Since $\cG(n,p)$ and $\cG(n,p,d,q)$ have the same conditional law given $\bP$
and $\bQ$, respectively,
by Proposition~\ref{pr:dp} we have that
\begin{equation*}
\tv{\cG(n, p)}{\cG(n, p, d, q)} \le \tv{\bP}{\bQ}.
\end{equation*}

Define the zero-diagonal symmetric random matrix $\B = [b_{i,j}]$
with $b_{i,j}, 1 \le i < j \le n$, following
an independent Bernoulli distribution with parameter $q$;
the matrix $\B$ is also independent of everything else defined previously.
We can then rewrite $\bQ$ as
\begin{equation*}
\bQ = \E_{\B} [(\1\1^\top - \B) \circ s_{t_p}(\M)
+ \B \circ s_{t_{p,d}\sqrt{d}}(\V^{-1/2}\Y\V^{-1/2})].
\end{equation*}
For step functions with parameters $t$ and $t'$,
we have the simple relation:
$s_{t'}(x) = s_{t}(x + t - t')$.
Hence, we can further express $\bQ$ as
\begin{equation*}
\begin{split}
\bQ &= \E_{\B} [(\1\1^\top - \B) \circ s_{t_p}(\M)
+ \B \circ s_{t_{p}}(\V^{-1/2}\Y\V^{-1/2}
+ (t_{p} - t_{p,d}\sqrt{d})\1\1^\top)]\\
&= \E_{\B} [s_{t_p}((\1\1^\top - \B) \circ \M
+ \B \circ (\V^{-1/2}\Y\V^{-1/2}
+ (t_{p} - t_{p,d}\sqrt{d})\1\1^\top))].
\end{split}
\end{equation*}
Let $\delta_{p,d} \coloneqq t_p - t_{p,d}\sqrt{d}$
and $ \H \coloneqq (\1\1^\top - \B) \circ \M
+ \B \circ (\V^{-1/2}\Y\V^{-1/2} + \delta_{p,d}\1\1^\top)$.
With this notation we have that $\bQ = \E_{\B}[ s_{t_{p}}(\H)]$.

Applying Proposition~\ref{pr:cond_div} gives
\begin{equation*}
\tv{\bP}{\bQ}
= \tv{s_{t_{p}}(\M')}{\E_{\B}[ s_{t_{p}}(\H)]}
\le \E_{\B}\tv{s_{t_p}(\M')}{s_{t_p}(\H)}.
\end{equation*}
Further, by Corollary~\ref{co:dp} we have that
\begin{equation*}
\E_{\B}\tv{s_{t_p}(\M')}{s_{t_p}(\H)} \le \E_{\B}\tv{\M'}{\H}.
\end{equation*}
Let $\H' \coloneqq (\1\1^\top - \B) \circ \M''
+ \B \circ \V^{-1/2}\M''\V^{-1/2}$.
By the triangle inequality of the distance
(see, e.g., \cite[(4.6)]{levin2017markov}),
\begin{equation*}
\E_{\B}\tv{\M'}{\H} \le \E_{\B} \tv{\M'}{\H'} + \E_{\B} \tv{\H'}{\H}.
\end{equation*}
By Proposition~\ref{pr:cond_div} again,
\begin{equation*}
\E_{\B} \tv{\H'}{\H} \le \E_{\B,\V} \tv{\H'}{\H}.
\end{equation*}

For a fixed value of $\B$,
both $\H$ and $\H'$ consist of entries from two matrices.
For the $(i,j)$th entry, if $b_{i,j} = 0$,
the entries are from $\M''$ and $\M$ respectively;
if $b_{i,j} = 1$, they come from $\V^{-1/2}\Y\V^{-1/2} + \delta_{p,d}\1\1^\top$
and $\V^{-1/2}\M''\V^{-1/2}$ respectively.
In the latter case, when $\V$ is fixed,
we can multiply both entries by $\sqrt{v_{i,i}v_{j,j}}$, and
the distance between the new matrices stays the same by definition.
That is,
if we let $\H'' \coloneqq (\1\1^\top - \B) \circ \M
+ \B \circ (\Y + \delta_{p,d} \V^{1/2}\1\1^\top\V^{1/2})$, then
\begin{equation*}
\E_{\B,\V} \tv{\H'}{\H} = \E_{\B,\V} \tv{\M''}{\H''}.
\end{equation*}

Putting the bounds from above together, we obtain that
\begin{equation} \label{eq:tri_tv}
\tv{\cG(n, p)}{\cG(n, p, d, q)} \le \underbrace{\E_{\B} \tv{\M'}{\H'}}_{E_1}
+ \underbrace{\E_{\B,\V}\tv{\M''}{\H''}}_{E_2}.
\end{equation}
The first term on the right depicts the distance caused by normalization,
while the second one characterizes the level of independence between edges.
We deal with the two expectations $E_1$ and~$E_2$ in~\eqref{eq:tri_tv}
separately in the following two parts.
Subsequently, we bring our estimates together to conclude at
the end of the section.
\subsubsection{Upper bound for the first expectation}
By Pinsker's inequality and Proposition~\ref{pr:cond_div},
\begin{equation*}
E_1
\le \E_{\B}\sqrt{\frac{1}{2} \kl{\M'}{\H'}}
\le \E_{\B}\sqrt{\frac{1}{2} \E_{\V}\kl{\M'}{\H'}}
\le \sqrt{\frac{1}{2} \E_{\B, \V}\kl{\M'}{\H'}},
\end{equation*}
where the last inequality is by Jensen's inequality.

Given $\B$ and $\V$, the entries of $\H'$ are independent. Since the entries of
$\M'$ are also independent, we have that
\begin{equation*}
\kl{\M'}{\H'} = \sum_{i<j} b_{i,j}
\kl{m_{i,j}'}{(v_i v_j)^{-1/2} m_{i,j}''},
\end{equation*}
where we use the facts that $m_{i,j}'$ and $m_{i,j}''$ are identically
distributed and that the divergence of identical distributions is zero.
Therefore,
\begin{equation*}
\begin{split}
\E_{\B, \V}\kl{\M'}{\H'}
&= \sum_{i<j} \E[b_{i,j}]
  \E_{\V}\kl{m_{i,j}'}{(v_i v_j)^{-1/2} m_{i,j}''}\\
&= q \sum_{i<j} \E_{\V}\kl{m_{i,j}'}{(v_i v_j)^{-1/2} m_{i,j}''}.
\end{split}
\end{equation*}
Since $m_{i,j}''$ is a standard normal random variable,
$(v_i v_j)^{-1/2}m_{i,j}''$ is distributed as $\cN(0, (v_i v_j)^{-1})$.

The divergence of two normal distributions has an explicit formula given by
the following proposition, which appears in most standard texts
(see, e.g., \cite[exercise 1.30]{bishop2006pattern}).
\begin{proposition} \label{pr:kl_uni_norm}
For two normal distributions with means $\mu_1, \mu_2$
and variances $\sigma_1^2, \sigma_2^2$,
\begin{equation*}
\kl{\cN(\mu_1,\sigma_1^2)}{\cN(\mu_2,\sigma_2^2)}
= \log \frac{\sigma_2}{\sigma_1}
+ \frac{\sigma_1^2 + (\mu_1 - \mu_2)^2}{2\sigma_2^2} - \frac{1}{2}.
\end{equation*}
\end{proposition}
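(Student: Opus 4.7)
The plan is to prove this by a direct calculation of the expectation of the log-likelihood ratio, which is the standard way to handle KL divergences between parametric densities with explicit formulas. Since both distributions are absolutely continuous with respect to Lebesgue measure, I can work with the ratio of their densities rather than the Radon--Nikodym derivative in the abstract.

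First, I would write down the two Gaussian densities
\begin{equation*}
p_i(x) = \frac{1}{\sqrt{2\pi}\,\sigma_i} \exp\biggl(-\frac{(x-\mu_i)^2}{2\sigma_i^2}\biggr), \qquad i \in \{1,2\},
\end{equation*}
and compute the log-ratio
\begin{equation*}
\log \frac{p_1(x)}{p_2(x)} = \log \frac{\sigma_2}{\sigma_1} - \frac{(x-\mu_1)^2}{2\sigma_1^2} + \frac{(x-\mu_2)^2}{2\sigma_2^2}.
\end{equation*}
Next I would take the expectation of this expression under $X \sim \cN(\mu_1, \sigma_1^2)$, which by definition equals $\kl{\cN(\mu_1,\sigma_1^2)}{\cN(\mu_2,\sigma_2^2)}$.

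The only computations required are $\E[(X-\mu_1)^2] = \sigma_1^2$ (giving $-1/2$ from the middle term) and, for the last term, expanding $(X - \mu_2)^2 = (X - \mu_1)^2 + 2(X-\mu_1)(\mu_1 - \mu_2) + (\mu_1 - \mu_2)^2$ and using $\E[X - \mu_1] = 0$ to obtain $\E[(X-\mu_2)^2] = \sigma_1^2 + (\mu_1 - \mu_2)^2$. Combining these three contributions yields exactly
\begin{equation*}
\log \frac{\sigma_2}{\sigma_1} + \frac{\sigma_1^2 + (\mu_1-\mu_2)^2}{2\sigma_2^2} - \frac{1}{2},
\end{equation*}
as claimed. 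There is no real obstacle here: the statement is a one-dimensional Gaussian moment calculation, and the only mild care needed is the algebraic expansion of $(X-\mu_2)^2$ around $\mu_1$ so that the cross term vanishes in expectation.
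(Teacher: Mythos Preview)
Your calculation is correct and is exactly the standard derivation. The paper does not actually prove this proposition; it simply states it as a well-known fact with a textbook reference, so your argument supplies what the paper omits.
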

Applying Proposition~\ref{pr:kl_uni_norm}, we have that
\begin{equation*}
\kl{m_{i,j}'}{(v_i v_j)^{-1/2} m_{i,j}''}
= \frac{1}{2}(- \log (v_i v_j) + v_i v_j - 1)
= \frac{1}{2}(- \log v_i - \log v_j + v_i v_j - 1).
\end{equation*}
Since the $z_{i,j}$'s are independent standard normal random variables,
$v_i d = \sum_{j=1}^d z_{i,j}^2$ has a $\chi^2(d)$ distribution.
We utilize a lower bound on the expected logarithm of
a chi-square random variable shown by the following proposition.
\begin{proposition} \label{pr:log_chi2}
Suppose that $X$ is a $\chi^2(k)$ random variable.
Then,
\begin{equation*}
\E[\log X] \ge \log k - \frac{2}{k}.
\end{equation*}
\end{proposition}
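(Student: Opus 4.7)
The plan has two main steps: first, compute $\E[\log X]$ exactly in terms of the digamma function $\psi := (\log\Gamma)'$; second, derive a sharp lower bound on $\psi$.

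For the first step, since $X \sim \chi^2(k)$ has density proportional to $x^{k/2-1}e^{-x/2}$ on $(0, \infty)$, a direct computation (substituting $u = x/2$) gives $\E[X^s] = 2^s \Gamma(k/2+s)/\Gamma(k/2)$ for $s$ in a neighborhood of $0$. Differentiating at $s = 0$, justified by dominated convergence, yields the standard identity
\begin{equation*}
\E[\log X] = \log 2 + \psi(k/2).
\end{equation*}

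For the second step, it suffices to prove that $\psi(x) \ge \log x - 1/x$ for all $x > 0$; substituting $x = k/2$ and combining with the identity above gives exactly $\E[\log X] \ge \log k - 2/k$, as desired. To prove this bound on $\psi$, I would set $F(x) := \psi(x) - \log x + 1/x$. The asymptotic expansion $\psi(x) = \log x - 1/(2x) + O(1/x^2)$ as $x \to \infty$ gives $F(x) \to 0$, so it is enough to show $F'(x) \le 0$, i.e., $\psi'(x) \le 1/x + 1/x^2$. Using the series representation $\psi'(x) = \sum_{n=0}^{\infty} 1/(n+x)^2$ and the monotone decrease of $t \mapsto 1/(t+x)^2$ on $(0,\infty)$,
\begin{equation*}
\psi'(x) = \frac{1}{x^2} + \sum_{n=1}^{\infty} \frac{1}{(n+x)^2}
\le \frac{1}{x^2} + \int_{0}^{\infty} \frac{dt}{(t+x)^2}
= \frac{1}{x^2} + \frac{1}{x},
\end{equation*}
which is what we need.

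I do not anticipate a genuine obstacle: the differentiation under the integral sign and the asymptotic expansion of $\psi$ are both standard. An alternative would be to cite the inequality $\psi(x) \ge \log x - 1/x$ directly from a reference on the digamma function, bypassing the short self-contained derivation above.
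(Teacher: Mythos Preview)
Your proposal is correct and follows essentially the same approach as the paper: both reduce to the identity $\E[\log X] = \log 2 + \psi(k/2)$ and the digamma bound $\psi(x) \ge \log x - 1/x$. The paper simply cites both of these from references (the formula from a standard text, the inequality from Alzer), whereas you supply short self-contained derivations; your closing remark about citing the inequality directly is in fact exactly what the paper does.
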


\begin{proof}
We have the following explicit formula for the expected logarithm of $X$
(see, e.g., \cite[(B.30)]{bishop2006pattern}):
\begin{equation*}
\E[\log X] = \psi\biggl(\frac{k}{2}\biggr) - \log \frac{1}{2},
\end{equation*}
where $\psi$ is the digamma function defined by
$\psi(x) \coloneqq \Gamma'(x) / \Gamma(x)$.
The digamma function has the well-known upper and lower bounds
(see \cite[(2.2)]{alzer1997some} and references therein):
\begin{equation} \label{eq:digamma_bounds}
\log x - \frac{1}{x} \le \psi(x) \le \log x - \frac{1}{2x}.
\end{equation}

By the lower bound in \eqref{eq:digamma_bounds},
\begin{equation*}
\E[\log X] \ge \log \frac{k}{2} - \frac{2}{k} - \log \frac{1}{2}
= \log k - \frac{2}{k}.\qedhere
\end{equation*}
\end{proof}

Using $\E[v_i] = \E[v_id]/d = 1$ and
the estimate in Proposition~\ref{pr:log_chi2},
we have that
\begin{equation*}
\begin{split}
\E_{\V}\kl{m_{i,j}'}{(v_i v_j)^{-1/2} m_{i,j}''}
&= \frac{1}{2}(-\E[\log(v_id)]+\log d-\E[\log(v_jd)]+\log d+\E[v_i]\E[v_j]-1)\\
&= \log d - \E[\log(v_id)] \le \frac{2}{d}.
\end{split}
\end{equation*}
Therefore, we conclude that
\begin{equation} \label{eq:upper_first}
E_1
\le \sqrt{\frac{1}{2}q\sum_{i<j}\E_{\V}\kl{m_{i,j}'}{(v_iv_j)^{-1/2} m_{i,j}''}}
\le \sqrt{\binom{n}{2}\frac{q}{d}} \le \sqrt{\frac{n^2q}{2d}}.
\end{equation}

\subsubsection{Upper bound for the second expectation}
We now turn to estimating $E_{2}$ from~\eqref{eq:tri_tv}.
We first bound the divergence of $\M''$ and $\H''$,
assuming that $\B$ and $\V$ are fixed,
and then provide an estimate for the distance between them
through Pinsker's inequality (Proposition~\ref{pr:pinsker}).
The benefit of resorting to the divergence is the chain rule property.
Our strategy resembles that of Bubeck and Ganguly~\cite{bubeck2016entropic}.

We state the chain rule for the divergence as the following proposition.
\begin{proposition}[Chain rule] \label{pr:chain}
For joint distributions $\cP_{X, Y} = \cP_{X \mid Y} \cP_Y$
and $\cQ_{X, Y} = \cQ_{X \mid Y} \cQ_Y$,
the chain rule for the divergence reads
\begin{equation}
\kl{\cP_{X, Y}}{\cQ_{X, Y}}
= \kl{\cP_Y}{\cQ_Y} + \kl{\cP_{X \mid Y}}{\cQ_{X \mid Y} \mid \cP_Y}.
\end{equation}
\end{proposition}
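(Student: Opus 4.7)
The plan is to prove Proposition~\ref{pr:chain} by a direct computation starting from the factorization of the Radon--Nikodym derivative. Since $\cP_{X,Y} \ll \cQ_{X,Y}$, the marginal $\cP_Y$ is absolutely continuous with respect to $\cQ_Y$, and (assuming the standard regularity that regular conditional distributions exist, which is automatic on Polish spaces) the conditional laws satisfy $\cP_{X\mid Y=y} \ll \cQ_{X\mid Y=y}$ for $\cQ_Y$-almost every $y$. The key identity I would then invoke is
\begin{equation*}
\frac{d\cP_{X,Y}}{d\cQ_{X,Y}}(x,y)
= \frac{d\cP_Y}{d\cQ_Y}(y) \cdot \frac{d\cP_{X\mid Y=y}}{d\cQ_{X\mid Y=y}}(x),
\end{equation*}
which is just the disintegration of both joint laws into marginal-times-conditional.

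Taking the logarithm turns this product into a sum, and linearity of expectation (under $\cP_{X,Y}$, which is the measure implicit in the definition of $\kl{\cP_{X,Y}}{\cQ_{X,Y}}$) splits the divergence into two pieces. For the first piece, the integrand $\log \frac{d\cP_Y}{d\cQ_Y}(y)$ depends only on $y$, so integrating out $x$ under $\cP_{X\mid Y=y}$ is trivial and the remaining expectation over $Y\sim\cP_Y$ is exactly $\kl{\cP_Y}{\cQ_Y}$. For the second piece, integrating $\log \frac{d\cP_{X\mid Y=y}}{d\cQ_{X\mid Y=y}}(x)$ over $x$ under $\cP_{X\mid Y=y}$ gives $\kl{\cP_{X\mid Y=y}}{\cQ_{X\mid Y=y}}$ by definition, and then averaging over $y \sim \cP_Y$ yields the conditional divergence $\kl{\cP_{X\mid Y}}{\cQ_{X\mid Y} \mid \cP_Y}$ per the convention adopted just before Proposition~\ref{pr:dp}.

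There is no real obstacle here beyond the measure-theoretic bookkeeping; the only place where one must be careful is in asserting the pointwise factorization of the Radon--Nikodym derivative, which implicitly relies on the existence of regular conditional distributions. In every application made later in the paper the random variables take values in Euclidean spaces and the densities are explicit, so this is never an issue and the identity follows immediately.
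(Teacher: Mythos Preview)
Your argument is correct and is the standard derivation of the chain rule for KL divergence via the factorization of the Radon--Nikodym derivative. Note, however, that the paper does not actually give a proof of Proposition~\ref{pr:chain}: it is stated as a well-known fact (in the same spirit as Propositions~\ref{pr:pinsker}, \ref{pr:cond_div}, and~\ref{pr:dp}) and immediately used, so there is nothing to compare against beyond confirming that your proof is valid, which it is.
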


For an $n \times n$ matrix $\A = [a_{i,j}]$,
denote its $k$th order leading principal submatrix by $\A_k$
and let $\bm{a}_k = (a_{k,1}, \ldots, a_{k,k-1})$
be the vector of the first $k-1$ entries in the $k$th row.
We also use $\Z_k$ to denote the matrix composed of the first $k$ rows of
$\Z \in \RR^{n \times d}$
and $\z_k$ to denote the $k$th row of $\Z$.

Until the end of this section, we assume that $\B$ and $\V$ are fixed.
Applying the chain rule to the divergence of $\H_{k+1}''$ and
$\M_{k+1}''$,
we obtain
\begin{equation*}
\kl{\H_{k+1}''}{\M_{k+1}''}
= \kl{\H_k''}{\M_k''} + \E_{\H_k''}\kl{\h_{k+1}'' \mid \H_k''}
{\m_{k+1}'' \mid \M_k'' = \H_k''}.
\end{equation*}
Further, since $\m_{k+1}''$ is independent of $\M_k''$,
\begin{equation*}
\E_{\H_k''}\kl{\h_{k+1}'' \mid \H_k''}
{\m_{k+1}'' \mid \M_k'' = \H_k''}
= \E_{\H_k''} \kl{\h_{k+1}'' \mid \H_k''}{\m_{k+1}''}.
\end{equation*}
By Proposition~\ref{pr:cond_div},
\begin{equation*}
\E_{\H_k''} \kl{\h_{k+1}'' \mid \H_k''}{\m_{k+1}''}
\le \E_{\H_k'', \Z_k} \kl{\h_{k+1}'' \mid \H_{k}'', \Z_k}{\m_{k+1}''}
= \E_{\Z_k} \kl{\h_{k+1}'' \mid \Z_k}{\m_{k+1}''}.
\end{equation*}
The equality holds since $\h_{k+1}''$ only depends on $\Z_k$
and is independent of other randomness in $\H_k''$.
Since $\z_{k+1}$ is a standard normal random vector, conditioning on $\Z_k$,
$\Z_k \z_{k+1} / \sqrt{d}$ is distributed as $\cN(\0, \Z_k \Z_k^\top / d)$.
By definition, $\m_{k+1}''$ has a $\cN(\0, \I_k)$ distribution.
Let $\D_k \coloneqq \diag(\b_{k+1})$ be the diagonal matrix whose entries are
the elements of $\b_{k+1}$.
Since
\begin{equation*}
\begin{split}
\h_{k+1}'' &= (\1 - \b_{k+1}) \circ \m_{k+1} + \b_{k+1} \circ
(\y_{k+1} + \delta_{p,d} \sqrt{v_{k+1}} \V_k^{1/2}\1)\\
&= (\1 - \b_{k+1}) \circ \m_{k+1} + \b_{k+1} \circ
\biggl(\frac{\Z_k \z_{k+1}}{\sqrt{d}}
+ \delta_{p,d}\sqrt{v_{k+1}} \V_k^{1/2} \1\biggr),
\end{split}
\end{equation*}
the distribution of $\h_{k+1}''$,
given $\Z_k$, $\D_k$, and $\V_k$,
is $\cN(\bmu_k, \bSigma_k)$ with
\begin{equation*}
\bmu_k = \delta_{p,d} \sqrt{v_{k+1}} \D_k \V_k^{1/2} \1
\end{equation*}
and
\begin{equation*}
\bSigma_k
= \D_k \biggl(\frac{\Z_k {\Z_k}^\top}{d}\biggr) {\D_k}^\top
+ (\I_k - \D_k){(\I_k - \D_k)}^\top
= \D_k \biggl(\frac{\Z_k {\Z_k}^\top}{d}\biggr) {\D_k} + \I_k - \D_k.
\end{equation*}

As a general form of Proposition~\ref{pr:kl_uni_norm},
we have an explicit formula for the divergence of two
$d$-dimensional normal distributions
(see, e.g., \cite[Exercise~15.13(b)]{wainwright2019high}),
stated as follows.
\begin{proposition} \label{pr:kl_norm}
For two $d$-dimensional multivariate normal distributions with means
$\bmu_1, \bmu_2$ and covariance matrices $\bSigma_1, \bSigma_2$, we have that
\begin{equation*}
\kl{\cN(\bmu_1, \bSigma_1)}{\cN(\bmu_2, \bSigma_2)}
= \frac{1}{2} \biggl({(\bmu_1 - \bmu_2)}^\top \bSigma_2^{-1} (\bmu_1 - \bmu_2)
+ \log \frac{\det{(\bSigma_2)}}{\det{(\bSigma_1)}}
+ \Tr(\bSigma_2^{-1} \bSigma_1) - d\biggr).
\end{equation*}
\end{proposition}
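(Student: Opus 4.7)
The plan is to derive the formula directly from the definition of KL divergence applied to the two multivariate Gaussian densities, using the trace trick to evaluate the resulting Gaussian expectations. Let $p_i$ denote the density of $\cN(\bmu_i, \bSigma_i)$ for $i = 1, 2$, and recall that
\begin{equation*}
\log p_i(\x) = -\frac{d}{2}\log(2\pi) - \frac{1}{2}\log\det(\bSigma_i)
- \frac{1}{2}(\x - \bmu_i)^\top \bSigma_i^{-1}(\x - \bmu_i).
\end{equation*}
The first step is to write $\kl{\cN(\bmu_1, \bSigma_1)}{\cN(\bmu_2, \bSigma_2)} = \E_{\X \sim \cN(\bmu_1, \bSigma_1)}[\log p_1(\X) - \log p_2(\X)]$ and observe that the $\log(2\pi)$ terms cancel and the log-determinant terms pull out, leaving $\frac{1}{2}\log(\det(\bSigma_2)/\det(\bSigma_1))$ plus the difference of two quadratic-form expectations.

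The main calculation is evaluating the two expectations. For the first, using the cyclic property of the trace and $\E_{\X}[(\X - \bmu_1)(\X - \bmu_1)^\top] = \bSigma_1$, I would write
\begin{equation*}
\E_{\X}[(\X - \bmu_1)^\top \bSigma_1^{-1}(\X - \bmu_1)]
= \Tr\bigl(\bSigma_1^{-1} \E_{\X}[(\X - \bmu_1)(\X - \bmu_1)^\top]\bigr) = \Tr(\I_d) = d.
\end{equation*}
For the second, I decompose $\X - \bmu_2 = (\X - \bmu_1) + (\bmu_1 - \bmu_2)$ and expand the quadratic form. The cross term has mean zero since $\E[\X - \bmu_1] = \0$; the pure squared-mean term is deterministic and yields the Mahalanobis piece $(\bmu_1 - \bmu_2)^\top \bSigma_2^{-1}(\bmu_1 - \bmu_2)$; and the remaining centered quadratic form, by the same trace trick, equals $\Tr(\bSigma_2^{-1} \bSigma_1)$.

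Assembling these pieces into the expression for the KL divergence,
\begin{equation*}
\kl{\cN(\bmu_1, \bSigma_1)}{\cN(\bmu_2, \bSigma_2)}
= \frac{1}{2}\log\frac{\det(\bSigma_2)}{\det(\bSigma_1)}
- \frac{1}{2}\cdot d
+ \frac{1}{2}\Tr(\bSigma_2^{-1}\bSigma_1)
+ \frac{1}{2}(\bmu_1 - \bmu_2)^\top \bSigma_2^{-1}(\bmu_1 - \bmu_2),
\end{equation*}
which rearranges to the claimed identity. There is no real obstacle here beyond bookkeeping; the only subtlety worth flagging is that the formula requires $\bSigma_1$ and $\bSigma_2$ to be positive definite so that both densities exist and $\bSigma_2^{-1}$ is well defined, which is part of the standing hypothesis that we are comparing two genuine Gaussian laws on $\RR^d$.
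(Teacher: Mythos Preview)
Your derivation is correct and is exactly the standard computation one would expect. The paper does not actually prove this proposition; it simply states it as a well-known identity with a reference to \cite[Exercise~15.13(b)]{wainwright2019high}, so there is nothing to compare against.
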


Since $\m_{k+1}'' \sim \cN(\0, \I_k)$, applying Proposition~\ref{pr:kl_norm},
we have that
\begin{equation*}
\begin{split}
\E_{\V,\Z_k} \kl{\h_{k+1}'' \mid \Z_k}{\m_{k+1}''}
&= \frac{1}{2}\E_{\V,\Z_k}[\delta_{p, d}^2 v_{k+1} \1^\top \D_k^2 \V_k \1
- \log\det{(\bSigma_k)} + \Tr(\bSigma_k) - k]\\
&= \frac{1}{2}\E_{\Z_k,\V_k,v_{k+1}}
[\delta_{p, d}^2 v_{k+1} \Tr(\D_k \V_k)
- \log\det{(\bSigma_k)} + \Tr(\bSigma_k) - k].
\end{split}
\end{equation*}
Since $v_k d$ has a $\chi^2(d)$ distribution,
$\E[v_k] = \E[v_kd]/d = 1$.
Further, since $\V$ and $\D$ are independent,
by linearity of expectation we have that
\begin{equation*}
\E_{\V,\Z_k} \kl{\h_{k+1}'' \mid \Z_k}{\m_{k+1}''}
= \frac{1}{2} (\delta_{p, d}^2 \Tr(\D_k)
+ \E_{\Z_k}[- \log \det(\bSigma_k)]
+ \E_{\Z_k} \Tr(\bSigma_k) - k).
\end{equation*}
Additionally,
\begin{equation*}
\E_{\Z_k} \Tr(\bSigma_k)
= \Tr \biggl(\D_k \E_{\Z_k}\biggl[\frac{\Z_k \Z_k^\top}{d}\biggr]
\D_k \biggr) + k - \Tr(\D_k)
= \Tr(\D_k) + k - \Tr(\D_k) = k.
\end{equation*}
Therefore, we obtain that
\begin{equation} \label{eq:two_terms}
\E_{\V,\Z_k} \kl{\h_{k+1}'' \mid \Z_k}{\m_{k+1}''}
= \frac{1}{2} (\delta_{p, d}^2 \Tr(\D_k) + \E_{\Z_k}[-\log\det{(\bSigma_k)}]).
\end{equation}
Next, we derive upper bounds for the two terms in the above display.

An upper bound on $\abs{\delta_{p,d}} = \abs{t_{p,d} \sqrt{d} - t_p}$
is shown in \cite{devroye2011high}, which is stated as the following lemma.
\begin{lemma}[{\cite[Lemma~1]{devroye2011high}}]
\label{le:tpd_de}
Assume $0 < p \le 1/2$ and $d \ge \max \{(2/p)^2, 27\}$.
Then
\begin{equation*}
\abs{t_{p,d} \sqrt{d} - t_p} \le U_{p,d},
\end{equation*}
where
\begin{equation*}
U_{p,d} = \kappa_p \sqrt{\log d / d} + \kappa_p' / \sqrt{d}
\end{equation*}
with $\kappa_p = 2\sqrt{2} \Phi^{-1}(1 - p)$ and
$\kappa_p' = 2\sqrt{2\pi} \exp((\Phi^{-1}(1 - p/2))^2/2)$.
\end{lemma}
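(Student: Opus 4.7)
The strategy is to reduce the quantile comparison to an average (over a $\chi^2$ variable) of Gaussian tail probabilities. Using the standard Gaussian representation of uniform points on the sphere and rotational invariance, one has the distributional identity
\begin{equation*}
\sqrt{d}\,\inner{\x_i}{\x_j} \stackrel{d}{=} U := \frac{\sqrt{d}\,Y}{\sqrt{Y^2 + S}},
\end{equation*}
with $Y \sim \cN(0,1)$ and $S \sim \chi^2(d-1)$ independent. Elementary algebra shows that for $0 < t < \sqrt{d}$,
\begin{equation*}
\P(U \ge t) = \P\!\left(Y \ge t\sqrt{S/(d-t^2)}\right) = \E_S\!\left[1 - \Phi\!\left(t\sqrt{S/(d-t^2)}\right)\right],
\end{equation*}
so the task is to show that this last expression differs from $1-\Phi(t)$ by a controllable amount, uniformly for $t$ near $t_p$.

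The second step is to control the random factor $\sqrt{S/(d-t^2)}$. By the Laurent--Massart concentration bound for $\chi^2(d-1)$, choosing the deviation parameter $u = (1/2)\log d$ gives
\begin{equation*}
\P\bigl(|S - (d-1)| > C\sqrt{d\log d}\bigr) \le 2/\sqrt{d}
\end{equation*}
for a universal constant $C$. On the complementary high-probability event $A$, and for $t$ in any bounded set, $t\sqrt{S/(d-t^2)} = t\,(1 + \Delta)$ with $|\Delta| \le C'\sqrt{\log d/d}$. A Taylor expansion of $1 - \Phi$ around $t$ then yields, uniformly for $t$ on bounded intervals,
\begin{equation*}
\bigl|\P(U \ge t) - (1 - \Phi(t))\bigr| \le \varphi(t)\,t\,C'\sqrt{\log d/d} + 2/\sqrt{d},
\end{equation*}
where $\varphi = \Phi'$ is the standard Gaussian density.

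Finally, this Kolmogorov-type bound is converted into a quantile bound. The assumption $d \ge (2/p)^2$ (together with a short bootstrapping argument) ensures that both $t_{p,d}\sqrt{d}$ and $t_p$ lie in $[0, t_{p/2}]$, on which $\varphi$ is bounded below by $\varphi(t_{p/2})$. Evaluating the previous display at $t = t_{p,d}\sqrt{d}$, using $\P(U \ge t_{p,d}\sqrt{d}) = p = 1 - \Phi(t_p)$, and applying the mean value theorem to $\Phi$ then gives
\begin{equation*}
|t_{p,d}\sqrt{d} - t_p|\,\varphi(t_{p/2}) \le \varphi(t_p)\,t_p\,C'\sqrt{\log d/d} + 2/\sqrt{d}.
\end{equation*}
Since $\varphi(t_{p/2})^{-1} = \sqrt{2\pi}\exp(t_{p/2}^2/2)$ and $\varphi(t_p) \le 1/\sqrt{2\pi}$, this produces exactly the announced form $U_{p,d} = \kappa_p \sqrt{\log d/d} + \kappa_p'/\sqrt{d}$.

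The main obstacle is matching the precise numerical constants $\kappa_p = 2\sqrt{2}\,\Phi^{-1}(1-p)$ and $\kappa_p' = 2\sqrt{2\pi}\exp(\Phi^{-1}(1-p/2)^2/2)$ announced in the lemma. The qualitative ingredients---the Gaussian representation, the Laurent--Massart concentration inequality, the Taylor expansion of $\Phi$, and two mean value theorems---are all routine, but extracting the specific ``$2\sqrt{2}$'' prefactor forces one to use the sharp Laurent--Massart tail with a carefully chosen deviation parameter, and the choice of $p/2$ as the anchor level is precisely what makes the inverse Gaussian density lower bound line up with the announced $\kappa_p'$.
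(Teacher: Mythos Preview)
The paper does not prove this lemma: it is quoted verbatim from \cite[Lemma~1]{devroye2011high} and then immediately superseded by the paper's own Lemma~\ref{le:delta_pd}, which achieves a sharper $C_p/d$ bound. So there is no ``paper's proof'' to compare against directly. That said, your concentration-inequality approach is exactly what the paper identifies as the mechanism behind the cited result---it remarks that ``the $\log d$ factor above is an artifact due to the use of concentration inequalities''---so your outline is faithful to the spirit of the original Devroye--Gy\"orgy--Lugosi--Udina argument.

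It is worth contrasting your route with the one the paper takes for the improved Lemma~\ref{le:delta_pd}. Rather than condition on $S$ and invoke Laurent--Massart, the paper observes that $z_1^2/\sum_{i=1}^d z_i^2$ is exactly $\mathrm{Beta}(1/2,(d-1)/2)$, writes the tail probability as an explicit integral, and compares it pointwise to the $\mathrm{Gamma}(1/2,1/2)$ tail via the elementary inequality $(1-z/d)^{d/2} \le e^{-z/2}$ together with Wendel's bounds on gamma-function ratios. This deterministic comparison removes both the $\log d$ factor and one power of $d^{-1/2}$, at the cost of somewhat more delicate bookkeeping with special functions. Your approach is conceptually cleaner and more robust (it would work for non-Gaussian coordinates), but it cannot beat the $\sqrt{\log d/d}$ rate because the concentration step is genuinely lossy here.

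As a proof of the cited lemma itself, your sketch is structurally sound; the only honest gap is the one you flag yourself, namely nailing down the exact constants $2\sqrt{2}$ and $2\sqrt{2\pi}$. Getting those requires choosing the deviation parameter in the $\chi^2$ tail bound more carefully than ``$u = (1/2)\log d$'' and tracking the resulting prefactors through the two mean-value-theorem steps, which is tedious but not conceptually difficult.
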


However, the $\log d$ factor above is an artifact
due to the use of concentration inequalities;
if we were to apply the lemma directly,
it would show up in our final bounds.
To address this, we present an improved result,
which not only removes the $\log d$ factor in the upper bound
but also tightens the upper bound
such that it decays as $1/d$ instead of $1/\sqrt{d}$.
\begin{lemma} \label{le:delta_pd}
Assume $0 < p \le 1/2$.
There exists a constant $C_p$ such that
\begin{equation*}
\abs{t_{p,d}\sqrt{d} - t_p} \le \frac{C_p}{d}.
\end{equation*}
\end{lemma}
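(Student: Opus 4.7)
The plan is to bypass the concentration-based route of Lemma~\ref{le:tpd_de} and instead work directly with the explicit density of the inner product, comparing it pointwise to the standard normal density. By rotational invariance, $X := \inner{\x_1}{\x_2}$ has density $c_d(1-x^2)^{(d-3)/2}$ on $[-1,1]$ with $c_d = \Gamma(d/2)/(\sqrt{\pi}\,\Gamma((d-1)/2))$, so $Y := \sqrt{d}\, X$ has density
\begin{equation*}
f_d(y) = \frac{c_d}{\sqrt{d}}\biggl(1 - \frac{y^2}{d}\biggr)^{(d-3)/2}, \qquad \abs{y} \le \sqrt{d}.
\end{equation*}
Writing $s_d := t_{p,d}\sqrt{d}$, the definitions say $\int_{s_d}^{\sqrt{d}} f_d(y)\,dy = p = \int_{t_p}^\infty \phi(y)\,dy$, where $\phi$ is the standard normal density. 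The goal is thus to show that $\int_{t_p}^{\sqrt{d}}(f_d - \phi) = O(1/d)$ and translate this into a bound on $|s_d - t_p|$.

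I would first collect two basic estimates. From the classical asymptotic $\Gamma(d/2)/\Gamma((d-1)/2) = \sqrt{d/2}\,(1 + O(1/d))$, one obtains $c_d/\sqrt{d} = (2\pi)^{-1/2}(1 + O(1/d))$. Second, for any fixed $M > 0$ and all $\abs{y} \le M$, a Taylor expansion of $\log(1-y^2/d)$ gives
\begin{equation*}
\frac{d-3}{2}\log\!\biggl(1 - \frac{y^2}{d}\biggr) = -\frac{y^2}{2} + O_M\!\biggl(\frac{1}{d}\biggr),
\end{equation*}
so $f_d(y) = \phi(y)(1 + O_M(1/d))$ uniformly on $[-M, M]$. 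For the tails, the inequality $\log(1-u) \le -u$ yields the pointwise envelope $f_d(y) \le e^{3/2}(c_d/\sqrt{d})e^{-y^2/2} \le C\phi(y)$ uniformly on $[-\sqrt{d},\sqrt{d}]$.

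With these two ingredients in hand, I would split the integral at a large enough fixed $M = M(p)$ chosen so that $\int_M^\infty \phi(y)\,dy \le 1/d$ (for $d$ large). The bulk part $\int_{t_p}^M \abs{f_d - \phi}\,dy$ is $O(1/d)$ by the uniform approximation, while the tail contribution $\int_M^{\sqrt{d}} f_d + \int_M^\infty \phi$ is controlled by the envelope and is also $O(1/d)$. Combined with the trivial bound $\int_{\sqrt{d}}^\infty \phi(y)\,dy \le e^{-d/2} = o(1/d)$, this gives
\begin{equation*}
\biggl|\int_{t_p}^{\sqrt{d}} f_d(y)\,dy - \int_{t_p}^{\infty} \phi(y)\,dy\biggr| \le \frac{C_p}{d}.
\end{equation*}
Subtracting the equality $\int_{s_d}^{\sqrt{d}} f_d = \int_{t_p}^\infty \phi$ yields $\bigl|\int_{s_d}^{t_p} f_d(y)\,dy\bigr| \le C_p/d$. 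Lemma~\ref{le:tpd_de} already gives $s_d \to t_p$, hence $s_d \in [t_p - 1, t_p + 1]$ for $d$ large, and on this interval the bulk approximation guarantees $f_d(y) \ge \phi(t_p + 1)/2$, a strictly positive constant depending only on $p$. The mean value theorem then gives $\abs{s_d - t_p} \le C_p/d$, completing the proof.

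The main technical hurdle is ensuring that the $O(1/d)$ control is truly uniform once the Stirling normalization and the interior Taylor expansion are combined, and correctly choosing $M$ so that both the approximation error on $[t_p, M]$ and the Gaussian tail beyond $M$ are simultaneously $O(1/d)$; the pointwise envelope $f_d \le C\phi$ is what makes this separation of bulk and tail possible without losing the $1/d$ rate to a logarithmic factor.
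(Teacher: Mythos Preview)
Your overall strategy—comparing the density $f_d$ of $\sqrt{d}\,\langle\x_1,\x_2\rangle$ to the standard normal density and then inverting via a lower bound on $f_d$ near $t_p$—is sound and can be made to work. But as written there is a genuine gap in the bulk/tail split. You propose to choose ``a large enough fixed $M = M(p)$'' with $\int_M^\infty \phi(y)\,dy \le 1/d$ for large $d$. This is impossible: if $M$ depends only on $p$, then $\int_M^\infty \phi$ is a fixed positive number, and it cannot be bounded by $1/d$ as $d \to \infty$. Consequently the tail term $\int_M^{\sqrt d} f_d + \int_M^\infty \phi$ is $O(1)$, not $O(1/d)$, and the argument as stated delivers only an $O(1)$ bound.

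There are two natural repairs. One is to let $M$ grow with $d$, e.g.\ $M = d^{1/4}$; then the Gaussian tail beyond $M$ is $e^{-\sqrt{d}/2} = o(1/d)$, and on $[t_p,M]$ the remainder $r(y,d) = \tfrac{d-3}{2}\log(1-y^2/d)+y^2/2$ satisfies $|r(y,d)| \le C(y^2+y^4)/d$ uniformly, which stays bounded and yields $|f_d-\phi| \le C'\phi(y)(1+y^4)/d$; integrating this against $\phi$ gives the desired $O(1/d)$. The other repair is to bypass the split entirely and prove the pointwise bound $|f_d(y)-\phi(y)| \le C\phi(y)(1+y^4)/d$ on, say, $|y|\le d^{1/4}$, handle $|y|>d^{1/4}$ by your envelope $f_d \le C\phi$ plus the super-exponential Gaussian tail, and integrate. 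Either way you need the error's $y$-dependence, not just uniformity on a compact set.

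For comparison, the paper avoids this issue altogether by working with one-sided inequalities: it uses $(1-z/d)^{d/2} \le e^{-z/2}$ globally and Wendel's bound on $\Gamma(\tfrac{d+2}{2})/\Gamma(\tfrac{d+3}{2})$ to show that the Beta tail probability and the Gamma (equivalently Gaussian) tail probability differ by $O(1/d)$, then converts this to a quantile bound via convexity of $\Phi^{-1}$. This sidesteps any bulk/tail decomposition and is somewhat shorter, though less transparent about why the rate is $1/d$ rather than $1/\sqrt{d}$. Your density-comparison route, once the split is fixed, gives a more direct Edgeworth-type picture of the approximation.
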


\begin{proof}
By rotational invariance,
fixing $\x_1 = \e_1$ and
letting $\x_2 = \hat{\z} \coloneqq \z/\norm{\z}$ with $\z \sim \cN(\0, \I_d)$,
we have that
\begin{equation*}
\P(\inner{\x_1}{\x_2} \ge t_{p,d})
= \P\biggl(\frac{z_1}{\norm{\z}} \ge t_{p,d}\biggr) = p.
\end{equation*}
Then, by symmetry of the distribution, we have
\begin{equation*}
\P\biggl(\frac{z_1}{\norm{\z}} \le -t_{p,d}\biggr) = p.
\end{equation*}
Therefore,
\begin{equation*}
\P\biggl(\frac{z_1^2}{\sum_{i=1}^d z_i^2} \ge t_{p,d}^2\biggr) = 2p.
\end{equation*}

Since $z_1^2 \sim \chi^2(1)$ and $\sum_{i=2}^d z_i^2 \sim \chi^2(d-1)$ are
independent,
$z_1^2/\sum_{i=1}^d z_i^2$ has a
$\mathrm{Beta}(\frac{1}{2},\frac{d-1}{2})$
distribution.
For ease of presentation,
we switch from dimension $d$ to considering dimension $d+3$.
By the probability density function of the beta distribution we have that
\begin{equation*}
2p
=
\P\biggl(\frac{z_1^2}{\sum_{i=1}^{d+3} z_i^2} \ge t_{p,d+3}^2\biggr)
=\frac{\Gamma\bigl(\frac{d+3}{2}\bigr)}
     {\Gamma\bigl(\frac{1}{2}\bigr)\Gamma\bigl(\frac{d+2}{2}\bigr)}
\int_{t_{p,d+3}^2}^{1} x^{-1/2}(1-x)^{d/2}\,dx.
\end{equation*}
The change of variables $x = z/d$, and some rearranging, yields
\begin{equation}\label{eq:beta_identity}
\frac{\Gamma\bigl(\frac{d+2}{2}\bigr)\sqrt{d}}
     {\sqrt{2}\Gamma\bigl(\frac{d+3}{2}\bigr)}2p
= \frac{1}{\sqrt{2}\Gamma\bigl(\frac{1}{2}\bigr)}
\int_{t_{p,d+3}^2 d}^{d} z^{-1/2}\biggl(1-\frac{z}{d}\biggr)^{d/2}\,dz.
\end{equation}
Wendel's double inequality
(see \cite[equation (7)]{wendel1948note}) states that for $0 < s < 1$,
\begin{equation*}
\biggl(\frac{z}{z+s}\biggr)^{1-s} \le \frac{\Gamma(z+s)}{z^s\Gamma(z)} \le 1.
\end{equation*}
Then, by setting $s=1/2$ and $z = d/2$, we have that
\begin{equation} \label{eq:wendel}
\frac{1}{\sqrt{d/2}}
\le \frac{\Gamma\bigl(\frac{d}{2}\bigr)}{\Gamma\bigl(\frac{d+1}{2}\bigr)}
\le \frac{\sqrt{2(d+1)}}{d}.
\end{equation}
Thus,
\begin{equation*}
\frac{\Gamma\bigl(\frac{d+2}{2}\bigr)\sqrt{d}}
     {\sqrt{2}\Gamma\bigl(\frac{d+3}{2}\bigr)}
\ge  \sqrt{\frac{d}{d+2}} \ge 1 - \frac{2}{d+2},
\end{equation*}
where the last inequality is due to the fact that
$(1-x)^{1/2} \ge 1 - x$ for $0 \le x \le 1$.

Since
\begin{equation} \label{eq:log_upper}
\log \biggl(1-\frac{z}{d}\biggr)^{d/2}
= \frac{d}{2} \log \biggl(1-\frac{z}{d}\biggr)
\le \frac{d}{2}\biggl(-\frac{z}{d}\biggr)
= -\frac{z}{2},
\end{equation}
we have
\begin{equation*}
\begin{split}
\frac{1}{\sqrt{2}\Gamma\bigl(\frac{1}{2}\bigr)}
\int_{t_{p,d+3}^2 d}^{d} z^{-1/2}\biggl(1-\frac{z}{d}\biggr)^{d/2}\,dz
&\le \frac{1}{\sqrt{2}\Gamma\bigl(\frac{1}{2}\bigr)}
\int_{t_{p,d+3}^2 d}^{d} z^{-1/2} e^{-z/2}\,dz\\
&\le \frac{1}{\sqrt{2}\Gamma\bigl(\frac{1}{2}\bigr)}
\int_{t_{p,d+3}^2 d}^{+\infty} z^{-1/2} e^{-z/2}\,dz
= 1 - F\biggl(t_{p,d+3}^2 d; \frac{1}{2}, \frac{1}{2}\biggr),
\end{split}
\end{equation*}
where $F(x;a,b)$ is the cumulative distribution function of
the gamma distribution $\mathrm{Gamma}(a, b)$.
Therefore, putting these inequalities back into~\eqref{eq:beta_identity}, we
obtain that
\begin{equation*}
2\biggl(1 - \frac{2}{d+2}\biggr)p
\le 1 - F\biggl(d t_{p,d+3}^2; \frac{1}{2}, \frac{1}{2}\biggr).
\end{equation*}
Since $\mathrm{Gamma}(\frac{1}{2}, \frac{1}{2})$ is also the distribution of
a squared standard normal random variable,
\begin{equation*}
\biggl(1 - \frac{2}{d+2}\biggr)p \le 1 - \Phi(t_{p,d+3}\sqrt{d}).
\end{equation*}
By the monotonicity of the cumulative distribution function,
we have that
\begin{equation*}
t_{p,d+3}\sqrt{d} \le \Phi^{-1}\biggl(1 - p + \frac{2p}{d+2}\biggr).
\end{equation*}

Since $\Phi^{-1}(x)$ is convex for $1/2 \le x < 1$,
we have that for $0 < y < 1 - x$,
\begin{equation*}
\Phi^{-1}(x+y) \le \Phi^{-1}(x) + y(\Phi^{-1})'(x+y).
\end{equation*}
Let $\varphi$ be the probability density function of
the standard normal distribution.
Then,
\begin{equation*}
(\Phi^{-1})'(x+y) = \frac{1}{\varphi(\Phi^{-1}(x+y))}
= \sqrt{2\pi}\exp\biggl(\frac{1}{2}(\Phi^{-1}(x+y))^2\biggr).
\end{equation*}
Additionally, for $d \ge 2$,
\begin{equation*}
\Phi^{-1}\biggl(1 - p + \frac{2p}{d+2}\biggr)
\le \Phi^{-1}\biggl(1 - \frac{p}{2}\biggr) = t_{p/2}.
\end{equation*}
Therefore, for $d \ge 2$ we have that
\begin{equation*}
t_{p,d+3} \sqrt{d}
\le t_p + \frac{2p}{d+2}\sqrt{2\pi}\exp\biggl(\frac{1}{2}t_{p/2}^2\biggr).
\end{equation*}
Then,
\begin{equation*}
t_{p,d+3} \sqrt{d+3}
\le \sqrt{1 + \frac{3}{d}} \biggl(t_p
    +\frac{2\sqrt{2\pi}}{d+2}\exp\biggl(\frac{1}{2}t_{p/2}^2\biggr)\biggr)
\le \biggl(1 + \frac{3}{2d}\biggr)\biggl(t_p
    +\frac{2\sqrt{2\pi}}{d+2}\exp\biggl(\frac{1}{2}t_{p/2}^2\biggr)\biggr).
\end{equation*}
By assuming $d \ge 6$,
\begin{equation*}
t_{p,d}\sqrt{d}
\le \biggl(1 + \frac{3}{2(d-3)}\biggr)\biggl(t_p
    +\frac{2\sqrt{2\pi}}{d-1} \exp\biggl(\frac{1}{2}t_{p/2}^2\biggr)\biggr)
\le \biggl(1 + \frac{3}{d}\biggr)\biggl(t_p
    +\frac{4\sqrt{2\pi}}{d} \exp\biggl(\frac{1}{2}t_{p/2}^2\biggr)\biggr)
\le t_p + \frac{C_p}{d},
\end{equation*}
where $C_p = 3(t_p+2\sqrt{2\pi}\exp(\frac{1}{2}t_{p/2}^2))$.

Similarly, we also have
\begin{equation*}
\P\biggl(\frac{z_1^2}{\sum_{i=1}^{d+3} z_i^2} \le t_{p,d+3}^2\biggr) = 1-2p,
\end{equation*}
which gives
\begin{equation*}
\frac{\Gamma\bigl(\frac{d+2}{2}\bigr)\sqrt{d}}
     {\sqrt{2}\Gamma\bigl(\frac{d+3}{2}\bigr)}(1-2p)
= \frac{1}{\sqrt{2}\Gamma\bigl(\frac{1}{2}\bigr)}
\int_0^{d t_{p,d+3}^2} x^{-1/2}\biggl(1-\frac{x}{d}\biggr)^{d/2}\,dx.
\end{equation*}
Employing \eqref{eq:wendel} and \eqref{eq:log_upper} again, we have
\begin{equation*}
\biggl(1 - \frac{2}{d+2}\biggr)(1-2p)
\le F\biggl(d t_{p,d+3}^2; \frac{1}{2}, \frac{1}{2}\biggr).
\end{equation*}
Then,
\begin{equation*}
\Phi(t_{p,d+3}\sqrt{d})
\ge 1 - \frac{1}{2}\biggl(1 - \biggl(1 - \frac{2}{d+2}\biggr)(1-2p)\biggr)
= 1 - p - \frac{1-2p}{d+2}.
\end{equation*}
By convexity of $\Phi^{-1}(x)$ in $(1/2, 1)$,
for $1/2 < x + y <  1$,
\begin{equation*}
\Phi^{-1}(x + y) \ge
\Phi^{-1}(x) + y\sqrt{2\pi}\exp
\biggl(\frac{1}{2}(\Phi^{-1}(x))^2\biggr).
\end{equation*}
Therefore, we have that
\begin{equation*}
t_{p,d+3}\sqrt{d+3} \ge t_{p,d+3}\sqrt{d}
\ge t_p - \frac{(1-2p)}{d+2}\sqrt{2\pi}\exp\biggl(\frac{1}{2}t_p^2\biggr).
\end{equation*}
Hence, by assuming $d \ge 6$,
\begin{equation*}
t_{p,d}\sqrt{d} \ge t_p - \frac{C_p}{d},
\end{equation*}
where $C_p = 2(1-2p)\sqrt{2\pi}\exp(\frac{1}{2}t_p^2)$.
\end{proof}
\begin{remark}
From the proof of Lemma~\ref{le:delta_pd}, we see that the lemma actually
specifies a convergence rate for the quantile function of a scaled beta
distribution to that of a gamma distribution.
More general claims and a Berry--Ess\'een type result can be derived with the
same techniques.
\end{remark}

\begin{corollary} \label{co:delta_pd}
For $p \in (0, 1)$, there exists a constant $C_p$ such that
\begin{equation*}
\delta_{p,d}^2 \le \frac{C_p}{d^2}.
\end{equation*}
\end{corollary}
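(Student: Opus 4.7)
The plan is to deduce this corollary essentially immediately from Lemma~\ref{le:delta_pd} by squaring, with a brief symmetry argument to extend from $p \le 1/2$ to all $p \in (0,1)$.

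First, for $0 < p \le 1/2$ and $d$ large enough (say $d \ge 6$, as in the proof of Lemma~\ref{le:delta_pd}), the lemma gives $\abs{\delta_{p,d}} = \abs{t_{p,d}\sqrt{d} - t_p} \le C_p/d$, so squaring yields $\delta_{p,d}^2 \le C_p^2/d^2$, which is of the desired form.

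Second, for $1/2 < p < 1$, I would use the symmetry of both the uniform distribution on $\mathbb{S}^{d-1}$ and the standard normal distribution about $0$. Since $\inner{\x_i}{\x_j}$ is symmetric about $0$, the threshold defined by $\P(\inner{\x_i}{\x_j} \ge t_{p,d}) = p$ equals $-t_{1-p,d}$; likewise $t_p = -t_{1-p}$. Hence $\delta_{p,d} = -\delta_{1-p,d}$, and applying the $p \le 1/2$ case to $1-p$ gives $\delta_{p,d}^2 \le C_{1-p}^2/d^2$.

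Third, for the finitely many remaining small values of $d$, the quantity $\delta_{p,d}^2$ is a fixed finite number (for each fixed $p$), and can be absorbed into the constant by enlarging $C_p$ so that $\delta_{p,d}^2 \le C_p/d^2$ holds uniformly in $d \ge 1$. There is no real obstacle here; the content of the corollary is entirely carried by Lemma~\ref{le:delta_pd}, and the only small care needed is the symmetry step, which exploits the fact that the model is invariant under negating all inner products.
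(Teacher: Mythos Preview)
Your proposal is correct and follows essentially the same approach as the paper: apply Lemma~\ref{le:delta_pd} directly for $0 < p \le 1/2$, then use the symmetry $t_{p,d} = -t_{1-p,d}$ and $t_p = -t_{1-p}$ to reduce the case $1/2 < p < 1$ to the first one. Your treatment is in fact slightly cleaner, since you observe directly that $\delta_{p,d} = -\delta_{1-p,d}$ and you explicitly address absorbing the finitely many small values of $d$ into the constant, which the paper leaves implicit.
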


\begin{proof}
Applying Lemma~\ref{le:delta_pd} for $0 < p \le 1/2$,
we have that there exists a constant $C_p'$ such that
\begin{equation*}
\delta_{p,d}^2 \le \frac{C_p'}{d^2}.
\end{equation*}
As before, by fixing $\x_1 = \e_1$ and
letting $\x_2 = \hat{\z} \coloneqq \z/\norm{\z}$ with $\z \sim \cN(\0, \I_d)$,
we have
\begin{equation*}
\P(\inner{\x_1}{\x_2} \ge t_{p,d})
= \P\biggl(\frac{z_1}{\norm{\z}} \ge t_{p,d}\biggr) = p.
\end{equation*}
Since $z_1/\norm{\z}$ has a symmetric distribution,
we have $t_{p,d} \ge 0$ for $0 < p \le 1/2$
and $t_{p,d} \le 0$ for $1/2 \le p <1$.
When $1/2 \le p < 1$,
\begin{equation*}
\P\biggl(\frac{z_1}{\norm{\z}} \ge -t_{p,d}\biggr)
= \P\biggl(-\frac{z_1}{\norm{\z}} \le t_{p,d}\biggr)
= \P\biggl(\frac{z_1}{\norm{\z}} \le t_{p,d}\biggr)
= 1 - p.
\end{equation*}
Applying Lemma~\ref{le:delta_pd} again, we obtain that there exists
a constant $C_p''$
such that
\begin{equation*}
(-t_{p,d}\sqrt{d} - \Phi^{-1}(p))^2
= (t_{p,d}\sqrt{d} + \Phi^{-1}(p))^2
= (t_{p,d}\sqrt{d} - \Phi^{-1}(1 - p))^2
\le \frac{C_p''}{d^2}.
\end{equation*}
By taking $C_p = C_p' + C_p''$, the claim directly follows.
\end{proof}

We now return to bounding the two terms in~\eqref{eq:two_terms},
starting with the first one.
Let $\ell \coloneqq \Tr(\D_k)$ be the number of nonzero entries in $\b_{k+1}$.
Then, $\ell$ is a function of $\B$, is independent of everything else,
and has a binomial distribution.
Corollary~\ref{co:delta_pd} gives
\begin{equation} \label{eq:EKL_1}
\delta_{p,d}^2 \Tr(\D_k) \le \frac{C_p\ell}{d^2}.
\end{equation}

Next, we turn to the upper bound for the second term in \eqref{eq:two_terms}.
Let $\bPi$ be a permutation matrix such that
$\b_{k+1}\bPi = (1, \dotsc, 1, 0, \dotsc, 0)$ becomes a vector
with its first $\ell$
entries equal to $1$ and the remaining $k-\ell$ entries equal to $0$.
Then,
\begin{equation*}
\bPi^\top \bSigma_k \bPi = \begin{pmatrix}
\S & \O\\
\O & \I_{k-\ell}
\end{pmatrix},
\end{equation*}
where $\O$ is the all-zero matrix and $\S$ is distributed the same as
$\Z_\ell \Z_\ell^\top/d$.
Recall that $\Z_\ell \in \RR^{\ell \times d}$ is the matrix of the first $\ell$
rows of $\Z$,
which has independent standard normal entries.
Since $\bPi$ is a permutation matrix,
its determinant is either $1$ or $-1$.
Therefore, we have that
\begin{equation} \label{eq:exp_logdet}
\E_{\Z_k}[- \log \det(\bSigma_k)]
= \E_{\Z_k}[- \log \det(\bPi^\top\bSigma_k\bPi)]
= \E_{\Z_k}[- \log \det(\S)]
= \E_{\Z_\ell}\biggl[-\log \det
\biggl(\frac{\Z_\ell{\Z_\ell}^\top}{d}\biggr) \biggr].
\end{equation}
So our main focus is bounding \eqref{eq:exp_logdet} from above.

The study of covariance matrices has attracted broad interests
in probability and statistics communities.
As an example, Cai, Liang, and Zhou~\cite{cai2015law} showed a central limit
theorem (CLT) for the log-determinant of such matrices.
An upper bound of the expected negative log-determinant is given
in \cite{bubeck2016entropic} under a general log-concave measure assumption,
serving as the major step towards an entropic CLT.
We first state their result as the following lemma.
\begin{lemma}[{\cite[Lemma~2]{bubeck2016entropic}}] \label{le:logdet_bm}
Let $\Z$ be an $n \times d$ random matrix with i.i.d.\ entries
from a log-concave probability measure $\mu$ with zero mean and unit variance.
There exists an absolute constant $C > 0$ such that for $d \ge C n^2$,
\begin{equation*}
\E \biggl[-\log\det\biggl(\frac{\Z \Z^\top}{d}\biggr)\biggr]
\le C \biggl(\sqrt{\frac{n}{d}} + \frac{n^2}{d}\biggr).
\end{equation*}
\end{lemma}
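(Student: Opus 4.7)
The plan is to reduce $-\log\det(\Z\Z^\top/d)$ to a sum of one-dimensional contributions via a Gram--Schmidt identity, and then to control each summand using concentration of the Euclidean norm for isotropic log-concave measures. Setting $\tilde{\z}_i \coloneqq \z_i - P_{i-1} \z_i$, where $P_{i-1}$ denotes orthogonal projection onto the span of $\z_1,\dots,\z_{i-1}$, one has the volume identity $\det(\Z\Z^\top) = \prod_{i=1}^n \|\tilde{\z}_i\|^2$, and therefore, splitting each term around its conditional mean $d_i \coloneqq d - i + 1$,
\begin{equation*}
-\log\det(\Z\Z^\top/d) = \sum_{i=1}^n \log\!\bigl(d/d_i\bigr) + \sum_{i=1}^n \log\!\bigl(d_i/\|\tilde{\z}_i\|^2\bigr).
\end{equation*}
Conditionally on $\z_1, \dots, \z_{i-1}$, the vector $\tilde{\z}_i$ is the image of $\z_i$ under a deterministic orthogonal projection onto a $d_i$-dimensional subspace; since marginals of log-concave measures are log-concave, $\tilde{\z}_i$ is isotropic log-concave on that subspace, and in particular $\E[\|\tilde{\z}_i\|^2 \mid \z_1,\dots,\z_{i-1}] = d_i$.

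The first sum is deterministic, and under the hypothesis $d \ge Cn^2$ one has $n \le d/2$, so the elementary estimate $-\log(1-x) \le x + x^2$ on $[0, 1/2]$ yields
\begin{equation*}
\sum_{i=1}^n \log(d/d_i) = -\sum_{i=1}^n \log\!\bigl(1 - (i-1)/d\bigr) = O(n^2/d),
\end{equation*}
which accounts for the $n^2/d$ contribution in the claim.

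The main obstacle is the random part $\sum_i \E[\log(d_i/\|\tilde{\z}_i\|^2)]$, since Jensen's inequality only bounds it from below by $0$ and is therefore useless for an upper bound in the required direction. The strategy is to invoke the strong concentration of the Euclidean norm for isotropic log-concave vectors in high dimension---Paouris's large deviation inequality, or an equivalent thin-shell estimate---which guarantees that $\|\tilde{\z}_i\|^2/d_i$ is tightly concentrated around $1$. On the good event $\{ |\|\tilde{\z}_i\|^2/d_i - 1| \le 1/2 \}$, one applies the quadratic expansion $\log(1+u) \ge u - Cu^2$ together with a second-moment estimate for $\|\tilde{\z}_i\|^2/d_i - 1$; on the complementary bad event, one combines Paouris's exponential tail bound with a crude polynomial-in-$d$ lower bound on $\log\|\tilde{\z}_i\|^2$ to make the contribution negligible. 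Summing the resulting per-index bounds, and using that $d_i \asymp d$ uniformly for $i \le n$ under the hypothesis $d \ge Cn^2$, produces the $\sqrt{n/d}$ contribution and closes the argument. The crucial analytic input is the Paouris / thin-shell inequality, which is precisely what enables the estimate to hold for a general isotropic log-concave law rather than only the Gaussian case (where an exact computation via chi-square distributions would suffice).
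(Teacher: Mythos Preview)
The paper does not prove this lemma; it is quoted verbatim as \cite[Lemma~2]{bubeck2016entropic} and invoked only as a black box. In fact, the paper's argument bypasses it in favor of the sharper Gaussian-specific Lemma~\ref{le:logdet}, whose proof (via the exact Wishart log-determinant formula and digamma bounds) is what actually appears in the text. So there is no in-paper proof to compare your proposal against.

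That said, your sketch is on the right track for how the cited result is obtained in \cite{bubeck2016entropic}. The Gram--Schmidt identity $\det(\Z\Z^\top) = \prod_i \|\tilde{\z}_i\|^2$, together with the observation that conditionally on $\z_1,\dots,\z_{i-1}$ the vector $\tilde{\z}_i$ is an isotropic log-concave vector in a $(d-i+1)$-dimensional subspace (linear images of log-concave measures are log-concave, by Pr\'ekopa--Leindler), is exactly the decomposition used there, and Paouris-type concentration of $\|\tilde{\z}_i\|^2/d_i$ around $1$ is the correct analytic input. Your treatment of the deterministic first sum and the good/bad-event split for the random part are standard and correct. The one place your outline is vague is the final accounting: you assert that summing the per-index bounds ``produces the $\sqrt{n/d}$ contribution,'' but do not say which moment or tail estimate delivers precisely that rate. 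That step is where the real work in the general log-concave case sits, and would need to be made explicit to constitute a complete proof.
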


A direct application of Lemma~\ref{le:logdet_bm} results in
an upper bound that is loose for our analysis.
It is possible to leverage the normal distribution assumption
to obtain an improved estimate,
which we implement in Lemma~\ref{le:logdet} below.
Applying the lemma results in a better upper bound on the distance.

\begin{lemma} \label{le:logdet}
Consider an $n \times d$ matrix $\Z$ with independent standard normal entries.
For $d \ge 2n$,
\begin{equation*}
\E\biggl[-\log\det\biggl(\frac{\Z \Z^\top}{d}\biggr)\biggr]
\le \frac{4n}{d} + \frac{n^2}{d}.
\end{equation*}
\end{lemma}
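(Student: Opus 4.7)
The plan is to exploit the Bartlett decomposition of Wishart matrices, which gives an exact product representation of $\det(\Z\Z^\top)$ in terms of independent chi-square random variables. Specifically, since $\Z\Z^\top \sim \cW_n(\I,d)$ with $d \ge n$, one has the distributional identity
\[
\det(\Z\Z^\top) \stackrel{d}{=} \prod_{i=1}^n \chi_i^2,
\]
where the $\chi_i^2$ are independent with $\chi_i^2 \sim \chi^2(d-i+1)$. This collapses a matrix-valued problem into a sum of $n$ scalar log-moments.

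Taking logs and using the closed form $\E[\log \chi^2(k)] = \psi(k/2) + \log 2$ (already invoked in the proof of Proposition~\ref{pr:log_chi2}), I would write
\[
\E\biggl[-\log\det\biggl(\frac{\Z\Z^\top}{d}\biggr)\biggr]
= n\log d - \sum_{i=1}^n \bigl(\psi((d-i+1)/2) + \log 2\bigr).
\]
I would then apply the digamma lower bound $\psi(x) \ge \log x - 1/x$ from~\eqref{eq:digamma_bounds} with $x = (d-i+1)/2$, obtaining
\[
\E\biggl[-\log\det\biggl(\frac{\Z\Z^\top}{d}\biggr)\biggr]
\le \sum_{i=1}^n \log\frac{d}{d-i+1} + \sum_{i=1}^n \frac{2}{d-i+1}.
\]

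For the second sum, the assumption $d \ge 2n$ gives $d - i + 1 \ge d/2$, so it is bounded by $4n/d$, matching the first term in the target bound. For the first sum, I would rewrite each summand as $-\log(1 - (i-1)/d)$ and use the elementary inequality $-\log(1-x) \le x + x^2$ valid for $x \in [0,1/2]$ (applicable since $(i-1)/d \le (n-1)/d \le 1/2$). This yields
\[
\sum_{i=1}^n \log\frac{d}{d-i+1}
\le \sum_{i=1}^n \frac{i-1}{d} + \sum_{i=1}^n \frac{(i-1)^2}{d^2}
\le \frac{n^2}{2d} + \frac{n^3}{3d^2}
\le \frac{2n^2}{3d},
\]
where the last step again uses $d \ge 2n$. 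Summing the two contributions gives the desired $4n/d + n^2/d$ bound.

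The whole argument is essentially routine once the Bartlett identity is used; the only mild subtlety is ensuring that each of the two elementary inequalities (the digamma lower bound and $-\log(1-x) \le x+x^2$) is sharp enough to produce the advertised constants. No probabilistic concentration is needed, which is precisely why this improves on the log-concave bound of Lemma~\ref{le:logdet_bm}: the exact chi-square factorization lets us replace a $\sqrt{n/d}$ fluctuation term by the deterministic $4n/d$ coming from the digamma defect.
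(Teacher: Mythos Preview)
Your proof is correct and follows essentially the same approach as the paper: both start from the exact formula
\[
\E\biggl[-\log\det\biggl(\frac{\Z\Z^\top}{d}\biggr)\biggr]
= n\log\frac{d}{2} - \sum_{i=1}^n \psi\biggl(\frac{d-i+1}{2}\biggr)
\]
(the paper cites the Wishart log-determinant formula directly; you obtain it via the Bartlett decomposition, which is equivalent), apply the same digamma lower bound $\psi(x)\ge\log x-1/x$, and then bound the two resulting sums elementarily. The only differences are cosmetic: the paper handles $\sum 2/(d-i+1)$ by a telescoping argument and $\sum\log(d-i+1)$ by an integral comparison, whereas you use the cruder pointwise bound $d-i+1\ge d/2$ for the first sum and the inequality $-\log(1-x)\le x+x^2$ on $[0,1/2]$ for the second; both routes land on the same constants.
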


\begin{remark}
Compared to Lemma~\ref{le:logdet_bm}, the improvement thanks to
Lemma~\ref{le:logdet} is twofold.
First,
the upper bound removes the $\sqrt{n/d}$ term, which would be
the leading term in our analysis,
replacing it with an $n/d$ term.
Second, the inequality holds for $d \ge 2n$ rather than $d \ge Cn^2$.
We shall see how this improvement is reflected in the upper bound
on the distance in the final remarks.
\end{remark}

\begin{proof}[Proof of Lemma~\ref{le:logdet}]
For a random matrix $\W$ following a Wishart distribution $\cW_n(\bSigma, d)$,
the expectation of its log-determinant has an explicit formula
(see, e.g., \cite[(B.81)]{bishop2006pattern}):
\begin{equation} \label{eq:Wish}
\E[\log\det(\W)] = \sum_{i=1}^{n} \psi\biggl(\frac{d - i + 1}{2}\biggr)
+ n \log 2 + \log \det (\bSigma),
\end{equation}
where $\psi$ is the digamma function. 
Applying \eqref{eq:Wish} to $\Z\Z^\top$, we obtain
\begin{equation*}
\E\biggl[-\log\det\biggl(\frac{\Z\Z^\top}{d}\biggr)\biggr]
= - \E[\log\det(\Z\Z^\top)] + n \log d
= \sum_{i=1}^{n} - \psi\biggl(\frac{d - i + 1}{2}\biggr) + n \log \frac{d}{2}.
\end{equation*}
By the lower bound on the digamma function in \eqref{eq:digamma_bounds},
\begin{equation} \label{eq:logdet_lower}
\begin{split}
\E\biggl[- \log \det \biggl(\frac{\Z \Z^\top}{d}\biggr)\biggr]
&\le \sum_{i=1}^{n} \biggl(\frac{2}{d - i + 1}
- \log \frac{d - i + 1}{2}\biggr) + n \log \frac{d}{2}\\
&= \underbrace{\sum_{i=1}^{n}\frac{2}{d - i + 1}}_{S_1}
- \underbrace{\sum_{i=1}^{n} \log (d - i + 1)}_{S_2} + n \log d.
\end{split}
\end{equation}
The rest of the proof is devoted to bounding the two sums $S_1$ and $S_2$ from
 above separately.

By the elementary inequality $x \le - \log (1 - x)$,
$S_1$ can be bounded from above by
\begin{equation*}
S_1
\le 2\sum_{i=1}^{n} - \log\biggl(1-\frac{1}{d-i+1}\biggr)
= -2\log\biggl(\prod_{i=1}^{n}\frac{d-i}{d-i+1}\biggr)
= -2\log\biggl(1-\frac{n}{d}\biggr).
\end{equation*}
Further, by $- \log (1 - x) \le 2 x$ for $0 \le x \le 1/2$,
we have that for $d \ge 2n$,
\begin{equation} \label{eq:sum1}
S_1 \le -2\log\biggl(1-\frac{n}{d}\biggr) \le \frac{4n}{d}.
\end{equation}

For $S_2$,
we show a lower bound by constructing a continuous integral.
Since $\log(d-i+1) \ge \log(d-x)$ for all $x \in [i-1,i)$,
\begin{equation} \label{eq:sum2}
S_2 \ge \int_0^n \log(d-x)\,dx
= (x\log x-x) \Bigr\rvert_{d - n}^{d}
= -n + d\log d - (d - n) \log(d-n).
\end{equation}
Bringing the inequalities \eqref{eq:sum1} and \eqref{eq:sum2}
into \eqref{eq:logdet_lower},
we conclude that for $d \ge 2n$,
\begin{equation*}
\begin{split}
\E\biggl[- \log \det \biggl(\frac{\Z \Z^\top}{d}\biggr)\biggr]
&\le \frac{4n}{d} + n - d \log d + (d - n) \log (d - n) + n \log d\\
&= \frac{4n}{d} + n + (d - n) \log \biggl(1 - \frac{n}{d}\biggr)
\le \frac{4n}{d} + n + (d - n) \biggl(-\frac{n}{d}\biggr)
= \frac{4n}{d} + \frac{n^2}{d}.\qedhere
\end{split}
\end{equation*}
\end{proof}

As a consequence of Lemma~\ref{le:logdet},
there is an absolute constant $C > 0$ (we can take $C=5$),
such that for $d \ge 2n \ge 2k \ge 2\ell$,
\begin{equation} \label{eq:logdet_c}
\E\biggl[- \log \det \biggl(\frac{\Z_\ell \Z_\ell^\top}{d}\biggr)\biggr]
\le \frac{C\ell^2}{d}.
\end{equation}

Plugging the estimates in \eqref{eq:EKL_1} and \eqref{eq:logdet_c}
into \eqref{eq:two_terms},
we get that for $d \ge 2n$,
\begin{equation} \label{eq:EXKL_upper}
\E_{\V,\H_k''} \kl{\h_{k+1}'' \mid \H_k''}{\m_{k+1}''}
\le \E_{\V,\Z_k} \kl{\h_{k+1}'' \mid \Z_k}{\m_{k+1}''}
\le C_p\frac{\ell}{d^2} + C\frac{\ell^2}{d}
\end{equation}
for constants $C, C_p$.

Since $b_{i,j}$ has an independent Bernoulli distribution,
$\ell = \sum_{j=1}^{k} b_{k+1,j}$ follows a binomial distribution
$\mathrm{Bin}(k, q)$.
Hence, we have $\E[\ell] = kq$,
and $\E[\ell^2]$ can be bounded from above by
\begin{equation*}
\E[\ell^2] = \Var[\ell] + \E[\ell]^2 = kq(1-q) + (kq)^2 \le kq + k^2q^2.
\end{equation*}
Therefore, by taking the expectation over $\B$ in \eqref{eq:EXKL_upper},
we obtain that for $d \ge 2n$,
\begin{equation*}
\begin{split}
\E_{\B,\V,\H_k''} \kl{\h_{k+1}'' \mid \H_k''}{\m_{k+1}''}
\le C_p\frac{\E[\ell]}{d^2} + C\frac{\E[\ell^2]}{d}
\le C_p\frac{kq}{d^2} + C\biggl(\frac{kq}{d}+\frac{k^2q^2}{d}\biggr).
\end{split}
\end{equation*}

An iterative application of the chain rule yields
\begin{equation*}
\begin{split}
\E_{\B,\V} \kl{\H''}{\M''}
&= \E_{\B,\V} \biggl[\sum_{k=0}^{n-1} \E_{\H_k''}
\kl{\h_{k+1}'' \mid \H_k''}{\m_{k+1}''}\biggr]\\
&= \sum_{k=0}^{n-1}
\E_{\B,\V,\H_k''} \kl{\h_{k+1}'' \mid \H_k''}{\m_{k+1}''}.
\end{split}
\end{equation*}
Therefore, for $d \ge 2n$,
\begin{equation}\label{eq:final_KL_bound}
\E_{\B,\V} \kl{\H''}{\M''}
\le \sum_{k=0}^{n-1} \biggl(C_p\frac{kq}{d^2}
+ C\biggl(\frac{kq}{d}+\frac{k^2q^2}{d}\biggr)\biggr)
\le C_p \frac{n^2q}{d^2}
+ C \biggl(\frac{n^2q}{d} + \frac{n^3q^2}{d}\biggr),
\end{equation}
for some $C, C_p < \infty$.

By Pinsker's inequality (Proposition~\ref{pr:pinsker})
and Jensen's inequality,
we have that
\begin{equation*}
E_2 \coloneqq \E_{\B,\V} \tv{\M''}{\H''}
\le \E_{\B,\V} \sqrt{\frac{1}{2}\kl{\H''}{\M''}}
\le \sqrt{\frac{1}{2}\E_{\B,\V} \kl{\H''}{\M''}}.
\end{equation*}
Hence, using~\eqref{eq:final_KL_bound}
we conclude that there exist $C, C_p < \infty$ such that for
$d \ge 2n$,
\begin{equation} \label{eq:upper_second}
E_2 \le C_p\sqrt{\frac{n^2q}{d^2}}
+ C \biggl(\sqrt{\frac{n^2q}{d}} + \sqrt{\frac{n^3q^2}{d}}\biggr).
\end{equation}

\subsubsection{Concluding the proof}
Plugging the estimates in \eqref{eq:upper_first} and \eqref{eq:upper_second}
into \eqref{eq:tri_tv},
we have proven that there exist constants $C, C_p$ such that
for $d \ge 2n$,
\begin{equation} \label{eq:tv_upper}
\tv{\cG(n,p)}{\cG(n,p,d,q)} \le C_p\sqrt{\frac{n^2q}{d^2}}
+ C \biggl(\sqrt{\frac{n^2q}{d}} + \sqrt{\frac{n^3q^2}{d}}\biggr).
\end{equation}
We now explain why the $n^3q^2/d \to 0$ regime of Theorem~\ref{th:det_lower}
follows.
First, note that $n^{3} q^{2} / d = (nq)^{2} n/d$.
Thus, if we were to have $d < 2n$,
then $n^3q^2/d \to 0$ implies that $nq \to 0$,
and under this assumption we have already shown in Section~\ref{se:weak} that
the conclusion of Theorem~\ref{th:det_lower} holds.
So we may assume that $d \geq 2n$,
in which case the bound in~\eqref{eq:tv_upper} holds.
Then, $n^3q^2/d \to 0$ implies that the last term in~\eqref{eq:tv_upper}
goes to $0$.
For the second term, note that
$n^{2}q/d = (n^{4}q^{2}/d^{2})^{1/2}
\le (n^{3}q^{2}/d)^{1/2}$,
where we used that $d \geq n$, and so this term also vanishes.
This implies that the first term vanishes as well.

\begin{remark}
Using Lemma~\ref{le:logdet_bm} in the place of Lemma~\ref{le:logdet}
and following the same derivations,
we would similarly obtain that
for constants $C', C_p'$, when $d \ge C' n^2$,
\begin{equation}
\tv{\cG(n, p)}{\cG(n, p, d, q)}
\le C_p' \sqrt{\frac{n^2q}{d^2}}
+ C'\biggl(\sqrt[\leftroot{-2}\uproot{6}4]{\frac{n^3q}{d}}
+ \sqrt{\frac{n^3q^2}{d}}\biggr).
\end{equation}
In this case, the dominating term becomes $\sqrt[4]{n^3q/d}$,
resulting in a worse bound than~\eqref{eq:tv_upper},
and hence the conclusion follows only in a smaller
parameter regime.
\end{remark}

\begin{remark}
Utilizing Lemma~\ref{le:logdet},
an upper bound of the total variation distance between Wishart and GOE
is readily available.
Applying chain rule directly to the divergence between $\Y$ and $\M$,
we conclude that for an absolute constant $C > 0$,
\begin{equation}
\tv{\Y}{\M} \le C\sqrt{\frac{n^3}{d}}.
\end{equation}
This result removes the first term and log factors of Theorem~2
in \cite{bubeck2016entropic} in this special case, and coincides with
the exact formula given by R\'acz and Richey~\cite{racz2019smooth}
up to a multiplicative constant.
\end{remark}

\section{Detecting geometry using signed triangles and other statistics}
\label{se:posb}
In this section, we explore
when detecting geometry in $\cG(n, p, d, q)$ is possible
and how to detect it.
In particular, we demonstrate that the signed triangle statistic,
proposed by Bubeck et al.~\cite{bubeck2016testing},
can be used to detect latent geometric structure
whenever $n^{3} q^{6} / d \to \infty$,
thus proving Theorem~\ref{th:det_upper}.
We also provide evidence suggesting that this boundary cannot be improved
with the family of signed cliques and of signed cycles,
which are natural extensions of signed triangles.

Consider a simple graph $G=(V,E)$, where $V$ is the set of vertices and
$E \subset \binom{V}{2}$ is the set of edges.
For a set $S$, we use $\binom{S}{k}$ to denote the collection of all subsets of
$S$ with cardinality $k$.
Let $\A = [a_{i,j}]$ be the adjacency matrix of $G$ and
write $a_e \coloneqq a_{i,j}$
for any edge $e = \{i,j\} \in \binom{V}{2}$.
Let $H=(S,F)$ be another graph with $S \subset V$ and
$F \subset \binom{S}{2}$.
Define $I_{H \mid G}$ to be the indicator of $H$ being a subgraph of $G$.
When the graph $G$ is clear from the context, we simply write $I_H$ instead of
$I_{H \mid G}$.
Then,
\begin{equation}
I_H = \ind{F \subset E} = \prod_{e \in F} a_e.
\end{equation}
Further, for a constant $p \in [0,1]$, let
\begin{equation}
\lambda_H \coloneqq \prod_{e \in F} (a_e - p)
\end{equation}
be the signed indicator of the subgraph $H$.

We first state a lemma that connects the expected signed indicator in
$\cG(n,p,d,q)$ to that in $\cG(n,p,d)$.
\begin{lemma} \label{le:gnpdq_gnpd}
Let $H = (S, F)$ be a fixed graph.
The signed indicator satisfies
\begin{equation*}
\E_{\cG(n,p,d,q)}[\lambda_H] = q^{\abs{F}}\E_{\cG(n,p,d)}[\lambda_H].
\end{equation*}
\end{lemma}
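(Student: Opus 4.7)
The plan is to exploit the mixture representation of $\cG(n,p,d,q)$ described right after Claim~\ref{cl:box}. That construction gives, for each edge $e$, the decomposition
\begin{equation*}
a_e = B_e\, a_e^{(2)} + (1 - B_e)\, a_e^{(1)},
\end{equation*}
where $\{B_e\}_{e}$ are i.i.d.\ $\mathrm{Bern}(q)$, the $a_e^{(1)}$ come from an independent $\cG(n,p)$ sample, and the $a_e^{(2)}$ come from an independent $\cG(n,p,d)$ sample. Consequently $a_e - p = B_e(a_e^{(2)} - p) + (1 - B_e)(a_e^{(1)} - p)$, and these three families of random variables ($B$-flips, $\cG(n,p)$-edges, $\cG(n,p,d)$-edges) are mutually independent.

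Next I would substitute this into $\lambda_H = \prod_{e \in F}(a_e - p)$ and expand the product:
\begin{equation*}
\lambda_H
= \sum_{F' \subset F}
\Bigl(\prod_{e \in F'} B_e (a_e^{(2)} - p)\Bigr)
\Bigl(\prod_{e \in F \setminus F'} (1 - B_e)(a_e^{(1)} - p)\Bigr).
\end{equation*}
Taking expectations and invoking independence, each summand factors as a product of expectations over the $B_e$'s, over the $\cG(n,p,d)$-sample, and over the $\cG(n,p)$-sample. The key observation is that the $a_e^{(1)}$ are independent across edges with $\E[a_e^{(1)} - p] = 0$, so for any $F' \subsetneq F$ we get $\E[\prod_{e \in F \setminus F'}(a_e^{(1)} - p)] = 0$. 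Thus only the term $F' = F$ survives, yielding
\begin{equation*}
\E_{\cG(n,p,d,q)}[\lambda_H]
= \E\Bigl[\prod_{e \in F} B_e\Bigr]
  \cdot \E\Bigl[\prod_{e \in F}(a_e^{(2)} - p)\Bigr]
= q^{|F|} \, \E_{\cG(n,p,d)}[\lambda_H],
\end{equation*}
where the first factor uses independence of the $B_e$'s.

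There is no real obstacle here; the argument is essentially a bookkeeping exercise once the mixture construction is in hand. The only point worth stressing is that the $a_e^{(2)}$'s are \emph{not} independent across edges (they come from a common random geometric graph), so one cannot split $\E[\prod_{e \in F'}(a_e^{(2)} - p)]$ further — but we don't need to, since that expectation is exactly what becomes $\E_{\cG(n,p,d)}[\lambda_H]$ for the surviving term $F' = F$. The independence that we do use is only the mutual independence between the three layers ($B$'s, geometric sample, Erdős–Rényi sample) and the edge-wise independence within the Erdős–Rényi layer, both of which are built into the construction.
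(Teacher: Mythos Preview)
Your proof is correct, but it takes a different route from the paper's. The paper conditions on the latent positions $\X$: since edges are conditionally independent given $\X$, one has
\[
\E_{\cG(n,p,d,q)}[\lambda_H]
= \E_{\X}\Bigl[\prod_{e\in F}\E[a_e - p \mid \X]\Bigr],
\]
and from $k_{i,j} = (1-q)p + q\,s_{t_{p,d}}(\inner{\x_i}{\x_j})$ each conditional factor equals $q\bigl(s_{t_{p,d}}(\inner{\x_i}{\x_j}) - p\bigr)$, so $q^{\abs{F}}$ pulls out immediately and the remaining expectation is precisely $\E_{\cG(n,p,d)}[\lambda_H]$. No subset expansion is needed. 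Your argument instead realizes the coupling explicitly, expands the product over all $F'\subset F$, and kills the cross terms via $\E[a_e^{(1)} - p] = 0$. Both arguments are elementary; the paper's is a bit more direct (one line of algebra per edge rather than a $2^{\abs{F}}$-term expansion), while yours makes the role of the Bernoulli mixture more visible and would generalize just as easily if the ``noise'' layer were replaced by any centered independent-edge model.
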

\begin{proof}
By conditioning on $\X$,
\begin{equation*}
\E_{\cG(n,p,d,q)}[\lambda_H]
= \E_{\X}\biggl[\E\biggl[\prod_{e \in F} (a_e - p) \biggm\vert \X\biggr]\biggr]
= \E_{\X}\biggl[\prod_{e \in F} \E[a_e - p \mid \X]\biggr],
\end{equation*}
where the last equality is by conditional independence of edges.

Given $\x_i$ and $\x_j$,
$a_{i,j}$ is a Bernoulli random variable with parameter
\begin{equation*}
k_{i,j} = (1-q)p + qs_{t_{p,d}}(\inner{\x_i}{\x_j}).
\end{equation*}
Hence,
\begin{equation} \label{eq:E_aij_sij}
\E[a_{i,j}-p \mid \X] = (1-p)k_{i,j} + (-p)(1-k_{i,j}) = k_{i,j} - p
= q(s_{t_{p,d}}(\inner{\x_i}{\x_j}) - p).
\end{equation}
Therefore,
\begin{equation} \label{eq:gnpd_qxsp}
\E_{\cG(n,p,d,q)}[\lambda_H]
= \E_{\X}\biggl[\prod_{\{i,j\} \in F}
  q(s_{t_{p,d}}(\inner{\x_i}{\x_j}) - p)\biggr]
= q^{\abs{F}}\E_{\X}\biggl[\prod_{\{i,j\} \in F}
             (s_{t_{p,d}}(\inner{\x_i}{\x_j}) - p)\biggr].
\end{equation}

On the other hand, for a hard random geometric graph $\cG(n,p,d)$,
\begin{equation*}
\E_{\cG(n,p,d)}[\lambda_H]
= \E\biggl[\prod_{e \in F} (a_e - p)\biggr]
= \E_{\X}\biggl[\prod_{e \in F}
  (s_{t_{p,d}}(\inner{\x_i}{\x_j}) - p)\biggr].\qedhere
\end{equation*}
\end{proof}

We first consider the graph $H = (S,F)$ being a complete graph
on three vertices, namely a \emph{triangle}.
Since in this case the subgraph $H$ is fully determined by its vertex set $S$,
we denote $T_S \coloneqq I_H$ to emphasize the dependency.
Given the adjacency matrix $\A$ of $G$,
$T_S$ can be expressed as
\begin{equation}
T_S = \prod_{\{i, j\} \subset S} a_{i,j}.
\end{equation}
Then, the total number triangles in $G$,
denoted by $T_3(G)$,
can be written as
\begin{equation}
T_3(G) \coloneqq \sum_{S \in \binom{V}{3}} T_S.
\end{equation}
The signed triangle and its count in $G$,
following the proposal by Bubeck et al.~\cite{bubeck2016testing},
are defined as
\begin{equation}
\tau_{\{i,j,k\}} \coloneqq \prod_{e \subset \{i,j,k\}} (a_e - p)
\quad \text{and} \quad
\tau_3(G) \coloneqq \sum_{\{i,j,k\} \subset V} \tau_{\{i,j,k\}}.
\end{equation}
For a sample random graph $G$ with edge probability $p$,
$\tau_3(G)$ is called the \emph{signed triangle statistic}.

To simplify our presentation, for random graphs with edge probability $p$,
we let
\begin{equation*}
\bra_{i,j} \coloneqq a_{i,j} - \E[a_{i,j}] = a_{i,j} - p
\quad
\text{and}
\quad
\brs_{i,j} \coloneqq s_{t_{p,d}}(\inner{\x_i}{\x_j}) - p.
\end{equation*}

For $\cG(n, p)$,
by the analyses in \cite[Section~3.1]{bubeck2016testing},
we have that
\begin{equation}
\E[\tau_3(\cG(n,p))] = 0
\quad \text{and} \quad
\Var[\tau_3(\cG(n,p))\bigr] = \binom{n}{3} p^3 (1 - p)^3.
\end{equation}

We analyze the expectation and variance of the signed triangle statistic
in $\cG(n, p, d, q)$ in the following two subsections.
For expository purposes, we start with the special case when $p = 1/2$,
and then proceed to the general case when $p$ is fixed in $(0, 1)$.

\subsection{The case \texorpdfstring{$p = \frac{1}{2}$}{p = 1/2}}
In this case, due to the symmetry of the distribution,
the threshold satisfies $t_{1/2,d} = 0$,
which no longer depends on the dimension.
As a result, the vectors that have an inner product greater than or equal to
the threshold with a fixed vector lie in a half space instead of a cone,
thus allowing a projection argument.
Utilizing the explicit distribution function in the projected space,
we are able to obtain asymptotically tight bounds for both the expectation
and the variance of the signed triangle statistic.
Some results can also be derived as corollaries from
more general statements in \cite{eldan2020information}.
However, we include our much simplified proofs for completeness
and as preparations for further claims.

Recall that the threshold $t_{p,d}$ is determined by
$\E[s_{t_{p,d}}(\inner{\x_i}{\x_j})]
= \P(\inner{\x_i}{\x_j} \ge t_{p,d}) = p$.
When $p$ is set to $1/2$, by symmetry of the distribution,
we have $t_{p,d} = 0$.
Then, the connection probability becomes
\begin{equation}
k_{i,j} = \frac{1}{2}(1 - q) + q s_0(\inner{\x_i}{\x_j}).
\end{equation}

Let $\x_1, \x_2, \x_3$ be independent random vectors uniformly in $\SS^{d-1}$.
Consider the event
\begin{equation} \label{eq:E_Delta}
E^\Delta \coloneqq \{\inner{\x_1}{\x_2} \ge 0, \inner{\x_2}{\x_3} \ge 0,
\inner{\x_3}{\x_1} \ge 0\}.
\end{equation}
We first give asymptotically tight bounds for $\P(E^\Delta)$ via a geometric
argument.

\subsubsection{Estimating the expectation}
Before starting our main discussion,
we present a proposition which gives an explicit probability density function
for the angle between two uniform random vectors in $\SS^{d-1}$.
Note that the probability density expressed by $\sin$
and gamma functions was also
derived in \cite{hammersley1950distribution,bubeck2016testing}
using different approaches.

For two vectors $\x, \y \in \RR^d$,
let $\theta(\x,\y) \in [0, \pi]$ stand for the angle between them.
Then,
\begin{equation*}
\theta(\x,\y) = \arccos\frac{\inner{\x}{\y}}{\norm{\x}\norm{\y}}.
\end{equation*}
If we further assume that $\x, \y \in \SS^{d-1}$, then
\begin{equation*}
\theta(\x,\y) = \arccos\inner{\x}{\y}.
\end{equation*}

\begin{proposition} \label{pr:angle_density}
The angle between two uniformly random vectors in $\SS^{d-1}$ has the
probability density function
\begin{equation*}
h(\theta) = \frac{1}{\zeta} \sin^{d-2}\theta, \quad \theta \in [0, \pi],
\end{equation*}
where
\begin{equation*}
\zeta \coloneqq \int_0^\pi \sin^{d-2}\theta
= \frac{\sqrt{\pi}\Gamma\bigl(\frac{d-1}{2}\bigr)}
  {\Gamma\bigl(\frac{d}{2}\bigr)}
\end{equation*}
is the normalization factor.
\end{proposition}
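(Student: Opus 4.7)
The plan is to reduce to a one-dimensional marginal by rotational invariance and then carry out a standard change of variables. First I would fix one of the two vectors to be the north pole $\e_1 \in \SS^{d-1}$; this is justified because the joint distribution of $(\x,\y)$ is invariant under the diagonal action of $O(d)$, and the angle $\theta(\x,\y)$ depends only on the inner product $\inner{\x}{\y}$, which is in turn invariant under such rotations. Thus the distribution of $\theta(\x,\y)$ coincides with that of $\arccos\inner{\e_1}{\y}$ where $\y$ is uniform on $\SS^{d-1}$, so it suffices to compute the distribution of the first coordinate (equivalently, the colatitude) of a uniform point on the sphere.

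Next I would parametrize $\SS^{d-1}$ by $(\theta,\omega) \in [0,\pi] \times \SS^{d-2}$ via $\y = (\cos\theta,\,\sin\theta\,\omega)$. A direct Jacobian computation (or the standard recursion for spherical surface measure) shows that the uniform surface measure factors as
\begin{equation*}
d\sigma(\y) = \frac{1}{\mathrm{vol}(\SS^{d-1})}\sin^{d-2}\theta\,d\theta\,d\sigma_{d-2}(\omega),
\end{equation*}
where $d\sigma_{d-2}$ is the (normalized) uniform measure on $\SS^{d-2}$. Marginalizing out $\omega$ gives that the density of $\theta$ is proportional to $\sin^{d-2}\theta$ on $[0,\pi]$, which is exactly the claimed functional form.

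It only remains to identify the normalization constant $\zeta = \int_0^{\pi}\sin^{d-2}\theta\,d\theta$. By symmetry about $\pi/2$ and the substitution $u = \sin^2\theta$, this integral is $2\int_0^{\pi/2}\sin^{d-2}\theta\,d\theta = B\bigl(\tfrac{d-1}{2},\tfrac{1}{2}\bigr)$, which by the standard beta–gamma identity equals $\sqrt{\pi}\,\Gamma\bigl(\tfrac{d-1}{2}\bigr)/\Gamma\bigl(\tfrac{d}{2}\bigr)$. Combining the two displays yields the stated density.

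There is no real obstacle here: each step is elementary and the only thing to be a little careful about is the Jacobian for spherical coordinates, which one can either verify by induction on $d$ or simply cite from any standard reference on integration on spheres. The same density can alternatively be derived by observing that if $\z \sim \cN(\0,\I_d)$ then $\y = \z/\norm{\z}$ is uniform on $\SS^{d-1}$, computing that the first coordinate $y_1$ has density proportional to $(1-y_1^2)^{(d-3)/2}$ on $[-1,1]$, and substituting $y_1 = \cos\theta$; this provides a convenient cross-check.
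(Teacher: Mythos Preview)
Your proposal is correct. The route you take is genuinely different from the paper's, though you anticipate the paper's method in your final paragraph as a ``cross-check.'' The paper's proof proceeds exactly via the Gaussian representation you mention at the end: it writes $\y = \z/\norm{\z}$ with $\z \sim \cN(\0,\I_d)$, fixes the other vector at $\e_1$, observes that $z_1^2/\norm{\z}^2 \sim \mathrm{Beta}\bigl(\tfrac{1}{2},\tfrac{d-1}{2}\bigr)$, and then differentiates the resulting CDF (treating $\theta \in [0,\pi/2]$ and $\theta \in [\pi/2,\pi]$ separately by symmetry). Your primary argument instead goes through the spherical-coordinate factorization of surface measure, which is arguably more direct and avoids the case split, at the cost of invoking the Jacobian for spherical measure as a known fact rather than deriving it. The paper's route has the advantage of reusing the Gaussian-to-sphere and Beta-distribution machinery that appears repeatedly elsewhere in the paper (e.g., in Lemma~\ref{le:delta_pd} and Proposition~\ref{pr:ang_den_phi}), so it fits the surrounding toolkit; your route is self-contained and slightly shorter. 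Either is entirely adequate for this elementary proposition.
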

\begin{proof}
Let $\z \sim \cN(\0, \I_d)$.
Then, $\hat{\z} \coloneqq \z/\norm{\z}$ is a uniform random point in $\SS^{d-1}$
(see \cite{muller1956some}, also \cite{muller1959note,marsaglia1972choosing}).
By rotation invariance on the sphere, we can fix one vector to be $\e_1$,
the first vector of the standard basis in $\RR^d$.
Then, the cumulative distribution function of the angle satisfies
\begin{equation*}
F(\theta) = \P(\arccos\inner{\e_1}{\hat{\z}} \le \theta)
= \P\biggl(\frac{z_1}{\norm{\z}} \ge \cos\theta\biggr).
\end{equation*}
For $\theta\in[0,\pi/2]$,
\begin{equation*}
\P\biggl(\frac{z_1}{\norm{\z}} \ge \cos\theta\biggr)
= \frac{1}{2}\P\biggl(\frac{z_1^2}{\sum_{i=1}^d z_i^2} \ge \cos^2\theta\biggr).
\end{equation*}
Since the $z_i$'s are standard normal random variables,
$z_1^2 \sim \chi^2(1)$ and $\sum_{i=2}^d z_i^2 \sim \chi^2(d-1)$
are independent.
Therefore, $z_1^2/\sum_{i=1}^d z_i^2$ is distributed as
$\mathrm{Beta}(\frac{1}{2},\frac{d-1}{2})$.
By the definition of the beta distribution,
\begin{equation*}
\P\biggl(\frac{z_1^2}{\sum_{i=1}^d z_i^2} \ge \cos^2\theta\biggr)
= \frac{1}{\rB\bigl(\frac{1}{2},\frac{d-1}{2}\bigr)}\int_{\cos^2\theta}^1
  x^{-1/2}(1-x)^{d/2-3/2}\,dx,
\end{equation*}
where $\rB(x,y)=\Gamma(x)\Gamma(y)/\Gamma(x+y)$ is the beta function.
Taking the derivative with respect to $\theta$, we have that
\begin{equation*}
h(\theta) = \frac{1}{2\rB\bigl(\frac{1}{2},\frac{d-1}{2}\bigr)}
(-(\cos\theta)^{-1}(1-\cos^2\theta)^{d/2-3/2})(-2\cos\theta\sin\theta)
= \frac{\Gamma\bigl(\frac{d}{2}\bigr)}
  {\sqrt{\pi}\Gamma\bigl(\frac{d-1}{2}\bigr)}\sin^{d-2}\theta.
\end{equation*}

For $\theta \in [\pi/2,\pi]$,
\begin{equation*}
\P\biggl(\frac{z_1}{\norm{\z}} \ge \cos\theta\biggr)
= \P\biggl(-\frac{z_1}{\norm{\z}} \le \cos(\pi-\theta)\biggr)
= \P\biggl(\frac{z_1}{\norm{\z}} \le \cos(\pi-\theta)\biggr),
\end{equation*}
where the last equality is by symmetry of the distribution.
Hence,
\begin{equation*}
h(\theta) = \frac{\Gamma\bigl(\frac{d}{2}\bigr)}
  {\sqrt{\pi}\Gamma\bigl(\frac{d-1}{2}\bigr)}\sin^{d-2}(\pi-\theta)
= \frac{\Gamma\bigl(\frac{d}{2}\bigr)}
  {\sqrt{\pi}\Gamma\bigl(\frac{d-1}{2}\bigr)}\sin^{d-2}\theta.\qedhere
\end{equation*}
\end{proof}

\begin{lemma} \label{le:tri_p=1/2}
For $E^\Delta$ defined in \eqref{eq:E_Delta}, we have
\begin{equation*}
\frac{1}{2\pi\sqrt{2\pi}} \cdot \frac{1}{\sqrt{d}}
\le \P(E^\Delta) - \frac{1}{8}
\le \frac{1}{4\sqrt{\pi}} \cdot \frac{1}{\sqrt{d}}.
\end{equation*}
\end{lemma}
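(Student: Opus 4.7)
The plan is to reduce $\P(E^\Delta)$ to a one-dimensional integral against the angle density from Proposition~\ref{pr:angle_density}, then to identify and bound the $\frac{1}{8}$-shifted remainder directly. First, by rotational invariance of the uniform distribution on $\SS^{d-1}$, I condition on $\x_2$ and $\x_3$. The event $\{\inner{\x_1}{\x_2}\ge 0,\,\inner{\x_1}{\x_3}\ge 0\}$ is the intersection of two half-spaces whose normals make angle $\theta \coloneqq \theta(\x_2,\x_3)$, and a reduction to the two-dimensional span of $\x_2$ and $\x_3$ (where rotational invariance forces the projected direction to be uniform on $\SS^{1}$) shows that this wedge has spherical measure $(\pi-\theta)/(2\pi)$. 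Combining with the equivalence $\inner{\x_2}{\x_3}\ge 0 \Longleftrightarrow \theta\le\pi/2$ and Proposition~\ref{pr:angle_density} yields
\begin{equation*}
\P(E^\Delta) = \int_0^{\pi/2}\frac{\pi-\theta}{2\pi}\cdot\frac{\sin^{d-2}\theta}{\zeta}\,d\theta.
\end{equation*}

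Next, since $\sin^{d-2}\theta$ is symmetric about $\pi/2$, one has $\int_0^{\pi/2}\sin^{d-2}\theta\,d\theta = \zeta/2$, so the constant term contributes $\frac{1}{4}$ to $\P(E^\Delta)$. A rearrangement followed by the change of variables $\phi = \pi/2 - \theta$ gives the clean identity
\begin{equation*}
\P(E^\Delta) - \frac{1}{8} = \frac{1}{2\pi\zeta}\int_0^{\pi/2}\phi\cos^{d-2}\phi\,d\phi,
\end{equation*}
which separates the problem into bounding a concentrated integral and bounding $\zeta$.

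It then suffices to bound the integral and $\zeta$ from both sides. For the integral, the inequality $\cos\phi \ge 1-\phi^2/2$ on $[0,\sqrt{2}]\subset[0,\pi/2]$ together with the substitution $u=\phi^2/2$ gives $\int_0^{\pi/2}\phi\cos^{d-2}\phi\,d\phi \ge 1/(d-1)$, while the substitution $u=\sin^2\phi$ combined with $\arcsin(\sqrt{u})\le \pi\sqrt{u}/2$ gives $\int_0^{\pi/2}\phi\cos^{d-2}\phi\,d\phi \le \pi/(2(d-1))$. For $\zeta = \sqrt{\pi}\,\Gamma(\tfrac{d-1}{2})/\Gamma(\tfrac{d}{2})$, applying Wendel's double inequality~\eqref{eq:wendel} (already invoked in the proof of Lemma~\ref{le:delta_pd}) yields $\sqrt{2\pi/(d-1)}\le\zeta\le\sqrt{2\pi d}/(d-1)$. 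Plugging these estimates into the identity above, the lower bound combines $\int \ge 1/(d-1)$ with $\zeta \le \sqrt{2\pi d}/(d-1)$ to give exactly $\frac{1}{2\pi\sqrt{2\pi d}}$, and the upper bound combines $\int \le \pi/(2(d-1))$ with $\zeta \ge \sqrt{2\pi/(d-1)}$ to give $\frac{1}{4\sqrt{2\pi(d-1)}}$, which is at most $\frac{1}{4\sqrt{\pi d}}$ whenever $d\ge 2$.

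The main conceptual step is the dihedral wedge formula $(\pi-\theta)/(2\pi)$; all other steps are routine bookkeeping with Gaussian-type integral estimates on $[0,\pi/2]$ and Wendel-type gamma-ratio bounds already deployed elsewhere in the paper. A minor caveat is that the inequality $\cos\phi \ge 1-\phi^2/2$ requires restricting the integration range to $[0,\sqrt{2}]$, but since $\sqrt{2} < \pi/2$ and the integrand is nonnegative on $[0,\pi/2]$ this is harmless, and the substitution $u=\phi^2/2$ makes the resulting integral collapse to a single beta-function evaluation.
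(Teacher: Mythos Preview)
Your proof is correct and follows essentially the same approach as the paper: both reduce $\P(E^\Delta)$ to the identity $\P(E^\Delta)-\tfrac18=\tfrac{1}{2\pi\zeta}\int_0^{\pi/2}\phi\cos^{d-2}\phi\,d\phi$ via the dihedral-wedge formula and Proposition~\ref{pr:angle_density}, then bound the integral and the gamma ratio with Wendel's inequality~\eqref{eq:wendel}. The only cosmetic differences are that the paper uses the simpler $\sin\phi\le\phi$ for the lower bound on the integral (rather than your $\cos\phi\ge 1-\phi^2/2$; both give exactly $1/(d-1)$) and absorbs the factor $1/(d-1)$ into the gamma ratio before applying Wendel, whereas you bound $\zeta$ and the integral separately.
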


\begin{proof}
We fix the plane determined by $\x_1$ and $\x_2$,
and then project $\x_3$ onto this plane.
Since no direction in this plane is unique,
the projected direction of $\x_3$ is uniform on the circle centered at the
origin.
Conditioning on $\x_1$ and $\x_2$,
$\{\inner{\x_1}{\x_3} \ge 0, \inner{\x_2}{\x_3} \ge 0\}$ happens if and only if
the projection of $\x_3$ falls into the intersection of two half spaces with
normal vectors $\x_1$ and $\x_2$.
Therefore,
\begin{equation*}
\P(\inner{\x_1}{\x_3} \ge 0, \inner{\x_2}{\x_3} \ge 0
\mid \x_1,\x_2)
= \frac{\pi - \theta(\x_1,\x_2)}{2\pi}.
\end{equation*}
Hence, the joint probability
\begin{equation*}
\begin{split}
\P(E^\Delta)
&= \E[\P(\inner{\x_1}{\x_3} \ge 0, \inner{\x_2}{\x_3} \ge 0 \mid \x_1, \x_2)
\mid \inner{\x_1}{\x_2} \ge 0]\\
&= \E\biggl[\frac{\pi-\theta(\x_1,\x_2)}{2\pi}
  \bb1\biggl\{\theta(\x_1,\x_2) \le \frac{\pi}{2}\biggr\}\biggr].
\end{split}
\end{equation*}
The density of $\theta(\x_1,\x_2)$ is given by
Proposition~\ref{pr:angle_density}.
Then,
\begin{equation*}
\begin{split}
 \P(E^\Delta)
&= \int_0^{\pi/2} \frac{\pi-\theta}{2\pi} \cdot \frac{1}{\zeta}\sin^{d-2}\theta
\,d\theta
= \int_0^{\pi/2} \frac{\pi/2+\theta}{2\pi} \cdot \frac{1}{\zeta}\cos^{d-2}\theta
\,d\theta\\
&= \frac{1}{2\zeta} \int_0^{\pi/2} \biggl(\frac{1}{2}+\frac{\theta}{\pi}\biggr)
\cos^{d-2}\theta\,d\theta
= \frac{1}{8} + \frac{1}{2\pi\zeta}
  \int_0^{\pi/2} \theta\cos^{d-2}\theta\,d\theta.
\end{split}
\end{equation*}

The elementary bounds
$2\theta/\pi \le \sin \theta \le \theta$,
which hold for $\theta \in [0, \pi/2]$,
give
\begin{equation} \label{eq:theta_sin}
\sin\theta \le \theta \le \frac{\pi}{2} \sin\theta.
\end{equation}
Thus, multiplying by $\cos^{d-2} \theta$ and taking the integral, we have
\begin{equation*}
\int_0^{\pi/2} \sin\theta \cos^{d-2}\theta\,d\theta
\le \int_0^{\pi/2} \theta \cos^{d-2}\theta\,d\theta
\le \frac{\pi}{2} \int_0^{\pi/2} \sin\theta \cos^{d-2}\theta\,d\theta.
\end{equation*}
A simple calculation gives
\begin{equation*}
\int_0^{\pi/2} \sin\theta \cos^{d-2}\theta\,d\theta
= - \int_0^{\pi/2} \cos^{d-2}\theta\,d\cos\theta
= \int_0^1 t^{d-2}\,dt = \frac{t^{d-1}}{d-1} \biggr\rvert_0^1
= \frac{1}{d-1}.
\end{equation*}
The above display together with the definition of $\zeta$ yields
\begin{equation}
\frac{1}{\zeta} \int_0^{\pi/2} \sin\theta \cos^{d-2}\theta\,d\theta
= \frac{\Gamma\bigl(\frac{d}{2}\bigr)}
{\sqrt{\pi}(d-1)\Gamma\bigl(\frac{d-1}{2}\bigr)}
= \frac{\Gamma\bigl(\frac{d}{2}\bigr)}
{2\sqrt{\pi}\Gamma\bigl(\frac{d+1}{2}\bigr)}.
\end{equation}
By \eqref{eq:wendel},
\begin{equation}
\frac{\sqrt{2}}{\sqrt{d}} = \frac{1}{\sqrt{d/2}}
\le \frac{\Gamma\bigl(\frac{d}{2}\bigr)}{\Gamma\bigl(\frac{d+1}{2}\bigr)}
\le \sqrt{\frac{d+1}{d}} \cdot \frac{1}{\sqrt{d/2}}
\le \frac{2}{\sqrt{d}}.
\end{equation}
Putting all the above together, we obtain
\begin{equation*}
\frac{1}{8} + \frac{1}{2\pi\sqrt{2\pi}} \cdot \frac{1}{\sqrt{d}}
\le \P(E^\Delta)
\le \frac{1}{8} + \frac{1}{4\sqrt{\pi}} \cdot \frac{1}{\sqrt{d}}.\qedhere
\end{equation*}
\end{proof}

With Lemma~\ref{le:tri_p=1/2},
we are able to estimate the expectation of the signed triangle statistic
in $\cG(n, 1/2, d, q)$.

\begin{lemma} \label{le:ET_p/2}
There exist absolute constants
$C, C' > 0$ such that
\begin{equation*}
\frac{Cn^3 q^3}{\sqrt{d}}
\le \E[\tau_3(\cG(n, 1/2, d, q))]
\le \frac{C'n^3 q^3}{\sqrt{d}}.
\end{equation*}
\end{lemma}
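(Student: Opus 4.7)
The plan is to reduce $\E[\tau_3(\cG(n,1/2,d,q))]$ to the probability $\P(E^\Delta)$ that has already been pinned down in Lemma~\ref{le:tri_p=1/2}. By exchangeability of the vertices, $\E[\tau_3(\cG(n,1/2,d,q))] = \binom{n}{3}\E_{\cG(n,1/2,d,q)}[\tau_{\{1,2,3\}}]$, and then Lemma~\ref{le:gnpdq_gnpd} applied with $H$ the triangle on $\{1,2,3\}$ factors out a $q^3$, leaving $\binom{n}{3}q^3\,\E_{\cG(n,1/2,d)}[\tau_{\{1,2,3\}}]$. So the task reduces to computing the triangle expectation in the hard random geometric graph at $p=1/2$.

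For the hard-model triangle expectation, I would exploit that $p=1/2$ forces $t_{p,d}=0$, so each factor $a_{ij} - \frac{1}{2}$ equals $\pm \frac{1}{2}$ and $\tau_{\{1,2,3\}} = \frac{1}{8}Y_{12}Y_{13}Y_{23}$, where $Y_{ij} \coloneqq \operatorname{sign}\inner{\x_i}{\x_j} \in \{\pm 1\}$ (the measure-zero event of equality can be ignored). The crucial input is an antipodal symmetry argument: because the uniform law on $\SS^{d-1}$ is invariant under $\x_i \mapsto -\x_i$, the flip group $(\mathbb{Z}/2)^3$ acts in a measure-preserving way on $(\x_1,\x_2,\x_3)$, and each flip $\x_i \mapsto -\x_i$ negates exactly the two $Y$-variables that involve $i$. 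The induced action on $\{\pm 1\}^3$ has kernel the diagonal (all-flip) element, so its image has order four and partitions $\{\pm 1\}^3$ into two orbits of size four, distinguished by the product $Y_{12}Y_{13}Y_{23}$. Since $(+,+,+)$ lies in the $+1$-orbit with probability $\P(E^\Delta)$, each of the four $+1$ sign patterns has probability $\P(E^\Delta)$, and each of the four $-1$ patterns has probability $\frac{1}{4} - \P(E^\Delta)$ by complementation. This yields $\E[Y_{12}Y_{13}Y_{23}] = 8\P(E^\Delta) - 1$ and hence
\begin{equation*}
\E_{\cG(n,1/2,d)}[\tau_{\{1,2,3\}}] = \P(E^\Delta) - \tfrac{1}{8}.
\end{equation*}

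Combining the two identities above gives $\E[\tau_3(\cG(n,1/2,d,q))] = \binom{n}{3}q^3\bigl(\P(E^\Delta) - \tfrac{1}{8}\bigr)$, and substituting the two-sided bounds from Lemma~\ref{le:tri_p=1/2} together with $\binom{n}{3} = \Theta(n^3)$ delivers the claimed asymptotics with explicit absolute constants $C, C'$. The real work has already been done in Lemma~\ref{le:tri_p=1/2} (a careful integral estimate using the exact angle density on $\SS^{d-1}$); the only mildly subtle remaining step is the antipodal-symmetry identification of the triangle expectation with $\P(E^\Delta) - 1/8$, which sidesteps the need to compute any further integral directly.
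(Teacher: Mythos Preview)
Your proof is correct and follows the same overall architecture as the paper: reduce to a single triangle via exchangeability, pull out the $q^3$ factor using Lemma~\ref{le:gnpdq_gnpd}, identify $\E_{\cG(n,1/2,d)}[\tau_{\{1,2,3\}}]=\P(E^\Delta)-\tfrac{1}{8}$, and then invoke Lemma~\ref{le:tri_p=1/2}. The only difference is in how you obtain that key identity. The paper expands the product $(s_0-\tfrac12)(s_0-\tfrac12)(s_0-\tfrac12)$ and uses rotation invariance (fix $\x_1=\e_1$) to compute the cherry probability $\E[s_0(\inner{\x_1}{\x_2})s_0(\inner{\x_1}{\x_3})]=\tfrac14$, after which the eight-term expansion collapses. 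Your antipodal $(\mathbb{Z}/2)^3$ orbit argument is a neat alternative: it bypasses the cherry computation entirely and reads off the identity directly from the orbit structure. Both arguments are short; yours is arguably more conceptual and makes it transparent why only $\P(E^\Delta)$ survives, while the paper's expansion is more hands-on and generalizes more directly to $p\ne\tfrac12$ (where antipodal symmetry is no longer available since $t_{p,d}\ne 0$).
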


\begin{proof}
By \eqref{eq:gnpd_qxsp},
the expectation of the signed triangle $\tau_{\{1,2,3\}}$ satisfies
\begin{equation*} \label{eq:E_tau123_gnpdq}
\E_{\cG(n, 1/2, d, q)}[\tau_{\{1,2,3\}}]
= q^3 \E\biggl[\biggl(s_0(\inner{\x_1}{\x_2}) - \frac{1}{2}\biggr)
\biggl(s_0(\inner{\x_2}{\x_3}) - \frac{1}{2}\biggr)
\biggl(s_0(\inner{\x_3}{\x_1}) - \frac{1}{2}\biggr)\biggr].
\end{equation*}

By rotation invariance on the sphere,
we may fix the direction of $\x_1$ to be $\e_1$.
Then,
\begin{equation*}
\begin{split}
\E[s_0(\inner{\x_1}{\x_2})s_0(\inner{\x_1}{\x_3})]
&= \P(\inner{\x_1}{\x_2} \ge 0, \inner{\x_1}{\x_3} \ge 0)
= \P(\inner{\e_1}{\x_2} \ge 0, \inner{\e_1}{\x_3} \ge 0)\\
&= \P(\inner{\e_1}{\x_2} \ge 0) \P(\inner{\e_1}{\x_3} \ge 0)
= \frac{1}{2} \cdot \frac{1}{2} = \frac{1}{4}.
\end{split}
\end{equation*}
Thus, by expanding the product in \eqref{eq:E_tau123_gnpdq}
and using the linearity of expectation,
we obtain
\begin{equation*}
\E_{\cG(n, 1/2, d, q)}[\tau_{\{1,2,3\}}]
= q^3 \biggl(\P(E^\Delta) - \frac{1}{8}\biggr).
\end{equation*}
Inserting Lemma~\ref{le:tri_p=1/2} yields
\begin{equation} \label{eq:Eq_tau123}
\frac{1}{2\pi\sqrt{2\pi}} \cdot \frac{q^3}{\sqrt{d}}
\le \E_{\cG(n, 1/2, d, q)}[\tau_{\{1,2,3\}}]
\le \frac{1}{4\sqrt{\pi}} \cdot \frac{q^3}{\sqrt{d}}.
\end{equation}
Since all signed triangle indicators are identically distributed,
\begin{equation*}
\E_{\cG(n, 1/2, d, q)}[\tau_3(G)]
= \sum_{\{i, j, k\} \subset V}
\E_{\cG(n, 1/2, d, q)}[\tau_{\{i, j, k\}}]
= \binom{n}{3} \E_{\cG(n, 1/2, d, q)}[\tau_{\{1,2,3\}}].
\end{equation*}
The claim directly follows.
\end{proof}

\subsubsection{Estimating the variance}
The variance of $\tau_3(G)$ for $G \sim \cG(n,1/2,d,q)$ satisfies
\begin{equation*}
\Var[\tau_3(\cG(n,1/2,d,q))]
= \E[\tau_3(G)^2] - \E[\tau_3(G)]^2
= \E \biggl[\biggl(\sum_{\{i,j,k\} \in V} \tau_{\{i,j,k\}}\biggr)^2\biggr]
- \biggl(\sum_{\{i,j,k\} \in V} \E[\tau_{\{i,j,k\}}]\biggr)^2.
\end{equation*}
Expanding the squares of sums and by linearity of expectation,
we can decompose the variance into one summation of variances
and three summations of covariances;
they are grouped by the number of shared vertices.
Since the variances or the covariances are identically distributed
within each group,
we can rewrite them as
\begin{equation} \label{eq:Var_tau3}
\begin{split}
\Var[\tau_3(\cG(n,1/2,d,q))]
&= \binom{n}{3} V_{\{1,2,3\},\{1,2,3\}}
+ \binom{n}{4}\binom{4}{2} V_{\{1,2,3\},\{1,2,4\}}\\
&\phantom{{}={}} + \binom{n}{5}\binom{5}{3}\frac{3}{2} V_{\{1,2,3\},\{1,4,5\}}
+ \binom{n}{6}\binom{6}{3}\frac{1}{2} V_{\{1,2,3\},\{4,5,6\}},
\end{split}
\end{equation}
where
\begin{equation*}
V_{\{i,j,k\},\{i',j',k'\}}
\coloneqq \E[\tau_{\{i,j,k\}}\tau_{\{i',j',k'\}}]
  -\E[\tau_{\{i,j,k\}}]\E[\tau_{\{i',j',k'\}}]
\end{equation*}
and the coefficients arise from simple combinatorial computations.
We bound the variance and covariances in the following parts respectively.

Two signed triangles that do not share any vertices are independent,
which implies that
\begin{equation*}
\E[\tau_{\{1,2,3\}}\tau_{\{4,5,6\}}]
= \E[\tau_{\{1,2,3\}}] \E[\tau_{\{4,5,6\}}].
\end{equation*}
Thus, we have that
\begin{equation*}
V_{\{1,2,3\},\{4,5,6\}} = 0.
\end{equation*}

For two signed triangles sharing a single vertex,
by rotation invariance on the sphere,
if we fix the direction of the shared vertex to be $\e_1$,
they are also independent,
which gives
\begin{equation*}
\begin{split}
\E[\tau_{\{1,2,3\}} \tau_{\{1,4,5\}}]
&= \E[\E[\tau_{\{1,2,3\}} \tau_{\{1,4,5\}}\mid \x_1]]
= \E[\E[\tau_{\{1,2,3\}} \mid \x_1]
\E[\tau_{\{1,4,5\}} \mid \x_1]]\\
&= \E[\E[\tau_{\{1,2,3\}}] \E[\tau_{\{1,4,5\}}]]
=\E[\tau_{\{1,2,3\}}] \E[\tau_{\{1,4,5\}}].
\end{split}
\end{equation*}
Therefore,
\begin{equation*}
V_{\{1,2,3\},\{1,4,5\}} = 0.
\end{equation*}

For two signed triangles on exactly the same vertices,
\begin{equation*}
\E[(\tau_{\{1,2,3\}})^2]
= \E[\bra_{1,2}^2 \bra_{2,3}^2 \bra_{3,1}^2]
= \E_{\X}[\E[\bra_{1,2}^2 \mid \X] \E[\bra_{2,3}^2 \mid \X]
\E[\bra_{3,1}^2 \mid \X]].
\end{equation*}
When $p = 1/2$, $\bra_{1,2}$ is either $1/2$ or $-1/2$.
Hence, $\bra_{1,2}^2 = 1/4$ regardless of $\X$
and other randomness of $a_{1,2}$.
Therefore,
\begin{equation*}
\E[(\tau_{\{1,2,3\}})^2] = \frac{1}{64},
\end{equation*}
which combined with \eqref{eq:Eq_tau123} gives that for $d \ge 8/\pi$,
\begin{equation*}
\frac{1}{128}
\le \frac{1}{64} - \frac{1}{16\pi}\cdot\frac{q^6}{d}
\le V_{\{1,2,3\},\{1,2,3\}}
\le \frac{1}{64} - \frac{1}{8\pi^3}\cdot\frac{q^6}{d}
\le \frac{1}{64}.
\end{equation*}
That is, $V_{\{1,2,3\},\{1,2,4\}}$ is bounded between absolute constants.

As a last step, for a pair of triangles sharing exactly two vertices,
the following lemma provides asymptotically tight bounds for the expectation of
their product.
\begin{lemma} \label{le:tau1/2_var}
The expectation of two signed triangles sharing two vertices
in $\cG(n,p,d,q)$ satisfies
\begin{equation*}
\frac{1}{16\pi^2} \cdot \frac{q^4}{d}
\le \E_{\cG(n, 1/2, d, q)}[\tau_{\{1,2,3\}} \tau_{\{1,2,4\}}]
\le \frac{1}{64} \cdot \frac{q^4}{d}.
\end{equation*}
\end{lemma}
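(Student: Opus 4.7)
The plan is to peel the shared edge $\{1,2\}$ off the product, condition on $\X$ to reduce to the hard model, and then reduce further by conditioning on $\x_1,\x_2$ so that the two ``outer'' vertices $3$ and $4$ contribute a square. Concretely, expand
\begin{equation*}
\tau_{\{1,2,3\}}\tau_{\{1,2,4\}} = \bra_{1,2}^{2}\,\bra_{2,3}\bra_{3,1}\bra_{2,4}\bra_{4,1},
\end{equation*}
and note that for $p=1/2$ one has $\bra_{i,j}^{2}=1/4$ deterministically, so $\bra_{1,2}^{2}$ just pulls out as $1/4$. Conditioning on $\X$, the four remaining edges are distinct and hence conditionally independent, and by~\eqref{eq:E_aij_sij} each satisfies $\E[\bra_{i,j}\mid\X]=q\brs_{i,j}$; pulling out the factor $q^{4}$ gives
\begin{equation*}
\E_{\cG(n,1/2,d,q)}[\tau_{\{1,2,3\}}\tau_{\{1,2,4\}}]
= \frac{q^{4}}{4}\,\E_{\X}\bigl[\brs_{2,3}\brs_{3,1}\brs_{2,4}\brs_{4,1}\bigr].
\end{equation*}

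Next I would condition on $(\x_1,\x_2)$. Since $\x_3$ and $\x_4$ are i.i.d.\ and independent of $(\x_1,\x_2)$, the inner expectation factors as the square of $\E[\brs_{2,3}\brs_{3,1}\mid\x_1,\x_2]$, reducing the problem to evaluating a single quantity depending only on $\theta\coloneqq\theta(\x_1,\x_2)$. Using the projection argument from the proof of Lemma~\ref{le:tri_p=1/2}, the event $\{\inner{\x_2}{\x_3}\ge0,\inner{\x_1}{\x_3}\ge0\}$ conditional on $(\x_1,\x_2)$ has probability $(\pi-\theta)/(2\pi)$, and each marginal $\P(\inner{\x_i}{\x_3}\ge0)=1/2$, so expanding $\brs=s_0-1/2$ yields
\begin{equation*}
\E[\brs_{2,3}\brs_{3,1}\mid\x_1,\x_2] \;=\; \frac{\pi-\theta}{2\pi}-\frac{1}{4} \;=\; \frac{\pi/2-\theta}{2\pi}.
\end{equation*}
Therefore
\begin{equation*}
\E_{\cG(n,1/2,d,q)}[\tau_{\{1,2,3\}}\tau_{\{1,2,4\}}]
= \frac{q^{4}}{16\pi^{2}}\,\E\bigl[(\pi/2-\theta)^{2}\bigr],
\end{equation*}
with $\theta$ distributed according to $h(\theta)=\sin^{d-2}\theta/\zeta$ from Proposition~\ref{pr:angle_density}.

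To finish, I would bound $\E[(\pi/2-\theta)^{2}]$ from above and below. Substituting $\phi=\pi/2-\theta$ turns the density into $\cos^{d-2}\phi/\zeta$ on $[-\pi/2,\pi/2]$, so the integral is symmetric in $\phi$ and equals $(2/\zeta)\int_{0}^{\pi/2}\phi^{2}\cos^{d-2}\phi\,d\phi$. The elementary sandwich $\sin\phi\le\phi\le(\pi/2)\sin\phi$ from~\eqref{eq:theta_sin} reduces everything to $\int_{0}^{\pi/2}\sin^{2}\phi\cos^{d-2}\phi\,d\phi$, which by one integration by parts (writing $\sin^{2}\phi=1-\cos^{2}\phi$ and using the Wallis recursion $\int\cos^{d}=\tfrac{d-1}{d}\int\cos^{d-2}$) equals $\tfrac{1}{d}\int_{0}^{\pi/2}\cos^{d-2}\phi\,d\phi=\zeta/(2d)$. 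This yields
\begin{equation*}
\frac{1}{d}\;\le\;\E[(\pi/2-\theta)^{2}]\;\le\;\frac{\pi^{2}}{4d},
\end{equation*}
and plugging in gives exactly the claimed bounds $q^{4}/(16\pi^{2}d)$ and $q^{4}/(64 d)$.

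The main obstacle is really only the bookkeeping at the reduction step: one has to notice that for $p=1/2$ the shared edge contributes the deterministic factor $1/4$ (so its randomness disappears entirely), and that after integrating out the edge Bernoullis, the independence of $\x_3$ and $\x_4$ given $(\x_1,\x_2)$ produces a square that automatically yields the $1/d$ scaling via the variance of $(\pi/2-\theta)$ on the sphere. The integral estimate itself is standard Wallis-type computation, matching the constants produced in Lemma~\ref{le:tri_p=1/2}.
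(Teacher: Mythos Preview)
Your proof is correct and follows essentially the same approach as the paper: peel off $\bra_{1,2}^{2}=1/4$, condition on $\X$ to extract $q^{4}$, condition on $(\x_1,\x_2)$ to get a square, identify the conditional expectation as $(\pi/2-\theta)/(2\pi)$ via the projection argument, and bound $\E[(\pi/2-\theta)^{2}]$ using the sandwich~\eqref{eq:theta_sin}. The only cosmetic difference is that the paper evaluates the ratio $\int\cos^{d}/\int\cos^{d-2}$ via gamma-function identities whereas you invoke the Wallis recursion directly; both give $1/d$ and the same final constants.
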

\begin{proof}
By the definition of signed triangles and conditional independence of edges
given $\X$,
\begin{equation*}
\begin{split}
\E[\tau_{\{1,2,3\}}\tau_{\{1,2,4\}}]
&= \E[\bra_{1,2}^2 \bra_{2,3} \bra_{3,1} \bra_{2,4} \bra_{4,1}]
= \E_{\X}[\E[\bra_{1,2}^2 \bra_{2,3} \bra_{3,1} \bra_{2,4} \bra_{4,1}\mid \X]]\\
&= \E_{\X}[\E[\bra_{1,2}^2 \mid \X]
\E[\bra_{2,3} \mid \X] \E[\bra_{3,1} \mid \X]
\E[\bra_{2,4} \mid \X] \E[\bra_{4,1} \mid \X]]\\
&= \frac{1}{4}\E_{\X}[\E[\bra_{2,3} \mid \X] \E[\bra_{3,1} \mid \X]
   \E[\bra_{2,4} \mid \X] \E[\bra_{4,1} \mid \X]].
\end{split}
\end{equation*}
Further by \eqref{eq:E_aij_sij} and rotation invariance, we have
\begin{equation} \label{eq:E_tau123*tau124}
\begin{split}
\E[\tau_{\{1,2,3\}}\tau_{\{1,2,4\}}]
&= \frac{q^4}{4}\E_{\X}[\brs_{2,3}\brs_{3,1}\brs_{2,4}\brs_{4,1}]
 = \frac{q^4}{4}\E_{\x_1, \x_2}[
   \E_{\x_3,\x_4}[\brs_{2,3}\brs_{3,1}\brs_{2,4}\brs_{4,1} \mid \x_1,\x_2]]\\
&= \frac{q^4}{4}\E_{\x_1, \x_2}[
   \E_{\x_3}[\brs_{2,3}\brs_{3,1} \mid \x_1,\x_2]
   \E_{\x_4}[\brs_{2,4}\brs_{4,1} \mid \x_1,\x_2]]\\
&= \frac{q^4}{4}\E_{\x_1, \x_2}[
   \E_{\x_3}[\brs_{2,3}\brs_{3,1} \mid \x_1,\x_2]^2].
\end{split}
\end{equation}
The last equality holds
since $\x_3$ and $\x_4$ are identically distributed.
Recall that $\theta(\x_1, \x_2)$ denotes the angle between $\x_1$ and $\x_2$.
The conditional expectation can be written as
\begin{equation*}
\begin{split}
\E_{\x_3}[\brs_{2,3}\brs_{3,1} \mid \x_1,\x_2]
&= \E_{\x_3}\biggl[\biggl(s_0(\inner{\x_2}{\x_3}) - \frac{1}{2}\biggr)
   \biggl(s_0(\inner{\x_3}{\x_1}) - \frac{1}{2}\biggr)\biggr]\\
&= \P(\inner{\x_2}{\x_3} \ge 0, \inner{\x_3}{\x_1} \ge 0
  \mid \x_1, \x_2) - \frac{1}{4}\\
&= \frac{\pi - \theta(\x_1, \x_2)}{2\pi} - \frac{1}{4}
= \frac{\pi/2 - \theta(\x_1, \x_2)}{2\pi}.
\end{split}
\end{equation*}
Therefore, we have
\begin{equation} \label{eq:E_tau123*tau124_eta}
\E[\tau_{\{1,2,3\}}\tau_{\{1,2,4\}}]
= \frac{q^4}{4} \E\biggl[
\biggl(\frac{\pi/2 - \theta(\x_1, \x_2)}{2\pi}\biggr)^2\biggr].
\end{equation}

Using Proposition~\ref{pr:angle_density}, we can write
\begin{equation*}
\begin{split}
\E\biggl[\biggl(\frac{\pi/2-\theta(\x_1,\x_2)}{2\pi}\biggr)^2\biggr]
&= \int_0^\pi \biggl(\frac{\pi/2 - \theta}{2\pi}\biggr)^2 h(\theta)\,d\theta
= \frac{1}{\zeta}\int_0^{\pi} \frac{(\pi/2-\theta)^2}{4\pi^2}\sin^{d-2}\theta
  \,d\theta\\
&= \frac{1}{\zeta} \int_{-\pi/2}^{\pi/2} \frac{\theta^2}{4\pi^2}
   \cos^{d-2}\theta\,d\theta
= \frac{1}{2\pi^2\zeta} \int_{0}^{\pi/2} \theta^2 \cos^{d-2} \theta\,d\theta,
\end{split}
\end{equation*}
where the last line follows from a change of variables
and the function inside the integral being even.
Applying \eqref{eq:theta_sin}, we have
\begin{equation*}
\frac{1}{2\pi^2\zeta}\int_{0}^{\pi/2} \sin^2\theta \cos^{d-2} \theta\,d\theta
\le \E\biggl[\biggl(\frac{\pi/2-\theta(\x_1,\x_2)}{2\pi}\biggr)^2\biggr]
\le \frac{1}{8\zeta} \int_{0}^{\pi/2} \sin^2\theta \cos^{d-2} \theta\,d\theta.
\end{equation*}
By the definition of $\zeta$,
\begin{equation} \label{eq:int_zeta}
\int_0^{\pi/2} \cos^{d-2}\theta
= \int_0^{\pi/2} \sin^{d-2}\theta
= \frac{\zeta}{2}
= \frac{\sqrt{\pi}\Gamma\bigl(\frac{d-1}{2}\bigr)}
  {2\Gamma\bigl(\frac{d}{2}\bigr)}.
\end{equation}
Since
\begin{equation*}
\int_{0}^{\pi/2} \sin^2\theta\cos^{d-2}\theta\,d\theta
= \int_{0}^{\pi/2} (1 - \cos^2\theta) \cos^{d-2} \theta\,d\theta
= \int_{0}^{\pi/2} \cos^{d-2} \theta\,d\theta
- \int_{0}^{\pi/2} \cos^{d} \theta\,d\theta,
\end{equation*}
by \eqref{eq:int_zeta} and $\Gamma(z+1) = z\Gamma(z)$,
\begin{equation*}
\frac{2}{\zeta}\int_{0}^{\pi/2} \sin^2\theta \cos^{d-2} \theta\,d\theta
= 1 -
\frac{\int_{0}^{\pi/2} \cos^{d} \theta\,d\theta}
{\int_{0}^{\pi/2} \cos^{d-2} \theta\,d\theta}
= 1 - \frac{\Gamma\bigl(\frac{d+1}{2}\bigr)}{\Gamma\bigl(\frac{d+2}{2}\bigr)}
\cdot \frac{\Gamma\bigl(\frac{d}{2}\bigr)}{\Gamma\bigl(\frac{d-1}{2}\bigr)}
= 1 - \frac{d-1}{d} = \frac{1}{d}.
\end{equation*}
Putting them together, we obtain
\begin{equation} \label{eq:eta_bounds}
\frac{1}{4\pi^2}\cdot\frac{1}{d}
\le \E\biggl[\biggl(\frac{\pi/2-\theta(\x_1,\x_2)}{2\pi}\biggr)^2\biggr]
\le \frac{1}{16}\cdot\frac{1}{d}.
\end{equation}

The claim directly follows from combining \eqref{eq:E_tau123*tau124_eta} and
\eqref{eq:eta_bounds}.
\end{proof}
Lemma~\ref{le:tau1/2_var} together with \eqref{eq:Eq_tau123} shows that
when $q \le 1/2$, for absolute constants $C, C' > 0$,
\begin{equation*}
\frac{Cq^4}{d}
\le V_{\{1,2,3\},\{1,2,4\}}
\le \E_{\cG(n, 1/2, d, q)}[\tau_{\{1,2,3\}}\tau_{\{1,2,4\}}]
\le \frac{C'q^4}{d}.
\end{equation*}

Putting the estimates together, we conclude with the following lemma.
\begin{lemma} \label{le:VT_p/2}
There exist absolute constants $C, C' > 0$ such that
for $d \ge 8/\pi$ and $q \le 1/2$,
\begin{equation*}
C\biggl(n^3 + \frac{n^4 q^4}{d}\biggr)
\le \Var[\tau_3(\cG(n, 1/2, d, q))]
\le C'\biggl(n^3 + \frac{n^4 q^4}{d}\biggr).
\end{equation*}
\end{lemma}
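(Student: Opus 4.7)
The plan is to assemble the pieces already prepared in this subsection. Equation~\eqref{eq:Var_tau3} decomposes $\Var[\tau_3(\cG(n,1/2,d,q))]$ into four terms indexed by the number of vertices shared between two triangles. Three of these four terms have already been controlled in the preceding discussion, so the proof of Lemma~\ref{le:VT_p/2} reduces to collecting estimates and checking that the arithmetic produces the claimed $n^3 + n^4 q^4/d$ order.

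First I would recall that $V_{\{1,2,3\},\{4,5,6\}}=0$ by independence of disjoint signed triangles, and $V_{\{1,2,3\},\{1,4,5\}}=0$ by the rotation-invariance argument that fixes $\x_1=\e_1$ and then uses conditional independence given $\x_1$. These two facts were established earlier in the section, so the summations over pairs sharing zero or one vertex in \eqref{eq:Var_tau3} contribute nothing.

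Next I would handle the ``diagonal'' term. Since $p=1/2$, each centered edge satisfies $\bra_{i,j}^{2}=1/4$ deterministically, so $\E[(\tau_{\{1,2,3\}})^{2}] = 1/64$; combined with the upper bound on $\E[\tau_{\{1,2,3\}}]$ from \eqref{eq:Eq_tau123}, this yields
\begin{equation*}
\frac{1}{128} \;\le\; V_{\{1,2,3\},\{1,2,3\}} \;\le\; \frac{1}{64}
\end{equation*}
for $d \ge 8/\pi$, as was noted in the discussion preceding the lemma. For the ``two shared vertices'' term, Lemma~\ref{le:tau1/2_var} gives
\begin{equation*}
\frac{1}{16\pi^{2}} \cdot \frac{q^{4}}{d}
\;\le\; \E[\tau_{\{1,2,3\}}\tau_{\{1,2,4\}}]
\;\le\; \frac{1}{64}\cdot\frac{q^{4}}{d},
\end{equation*}
while $\E[\tau_{\{1,2,3\}}]\E[\tau_{\{1,2,4\}}]$ is of order $q^{6}/d$ by \eqref{eq:Eq_tau123}, which is negligible compared to $q^{4}/d$ when $q \le 1/2$. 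Hence $V_{\{1,2,3\},\{1,2,4\}} \asymp q^{4}/d$ with absolute constants.

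Plugging these bounds back into \eqref{eq:Var_tau3}, only the first two summations survive, and both the upper and lower bounds take the form
\begin{equation*}
\Var[\tau_3(\cG(n,1/2,d,q))] \;\asymp\; \binom{n}{3}\cdot 1 \;+\; \binom{n}{4}\binom{4}{2}\cdot \frac{q^{4}}{d},
\end{equation*}
which is $\Theta\!\left(n^{3} + n^{4}q^{4}/d\right)$, as claimed. There is no real obstacle here; the only mildly delicate point is verifying that the negative contribution $-\E[\tau_{\{1,2,3\}}]\E[\tau_{\{1,2,4\}}] = -\Theta(q^{6}/d)$ appearing in $V_{\{1,2,3\},\{1,2,4\}}$ is dominated by the $\Theta(q^{4}/d)$ main term when $q \le 1/2$, so that the lower bound still holds with an absolute constant.
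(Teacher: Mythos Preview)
Your proposal is correct and follows essentially the same approach as the paper: the paper likewise assembles the four terms from~\eqref{eq:Var_tau3}, uses the already-established vanishing of $V_{\{1,2,3\},\{4,5,6\}}$ and $V_{\{1,2,3\},\{1,4,5\}}$, the constant bounds on $V_{\{1,2,3\},\{1,2,3\}}$ for $d\ge 8/\pi$, and the $\Theta(q^4/d)$ bounds on $V_{\{1,2,3\},\{1,2,4\}}$ (using $q\le 1/2$ to absorb the $q^6/d$ correction), then simply states the lemma as the conclusion.
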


\subsubsection{Concluding the proof for
\texorpdfstring{$p = \frac{1}{2}$}{p = 1/2}}
\label{sec:signed_triangles_conclusion}
Combining the estimates in Lemma~\ref{le:ET_p/2} and Lemma~\ref{le:VT_p/2},
we establish that for absolute constants $C > 0$ and $C' < \infty$,
\begin{equation*}
\E[\tau_3(\cG(n, 1/2))] = 0, \quad
\E[\tau_3(\cG(n, 1/2, d, q))] \ge \frac{Cn^3q^3}{\sqrt{d}}
\end{equation*}
and
\begin{equation*}
V_m \coloneqq \max \{\Var[\tau_3(\cG(n, 1/2))],
\Var[\tau_3(\cG(n, 1/2, d, q))]\}
\le C'\biggl(n^3 + \frac{n^4 q^4}{d}\biggr).
\end{equation*}
Let $\Delta \coloneqq \E[\tau_3(\cG(n, 1/2, d, q))]$.
Chebyshev's inequality implies that for an absolute constant $C$,
\begin{equation*}
\P\biggl(\tau_3(\cG(n, 1/2, d, q)) \le \frac{1}{2} \Delta\biggr)
\le \frac{4V_m}{\Delta^2} \le
\frac{C}{2}\biggl(\frac{d}{n^3q^6} + \frac{1}{n^2q^2}\biggr)
\end{equation*}
and
\begin{equation*}
\P\biggl(\tau_3(\cG(n, 1/2)) \ge \frac{1}{2} \Delta\biggr)
\le \frac{4V_m}{\Delta^2} \le
\frac{C}{2}\biggl(\frac{d}{n^3q^6} + \frac{1}{n^2q^2}\biggr).
\end{equation*}
Therefore, we conclude that
\begin{equation}
\begin{split}
\tv{\cG(n, 1/2)}{\cG(n, 1/2, d, q)}
&\ge \P\biggl(\tau_3(\cG(n, 1/2, d, q)) \ge \frac{1}{2} \Delta\biggr)
- \P\biggl(\tau_3(\cG(n, 1/2)) \ge \frac{1}{2} \Delta\biggr)\\
&\ge 1 - C\biggl(\frac{d}{n^3q^6} + \frac{1}{n^2q^2}\biggr).
\end{split}
\end{equation}

Theorem~\ref{th:det_upper} in the case when $p = 1/2$ directly follows.
Note that when $n^3q^6/d \to \infty$,
we also have that
$n^2q^2 = (n^3q^3)^{2/3} \ge (n^3q^3 \cdot q^3/d)^{2/3}
\to \infty$,
since $q^3/d \le 1$.

\subsection{Fixed \texorpdfstring{$p \in (0, 1)$}{0 < p < 1}}
We perform similar analyses as seen previously,
but now for $p \in (0, 1)$.
Our strategy is again to estimate the expectation and variance of
signed triangle statistic in $\cG(n, p, d, q)$,
and then apply Chebyshev's inequality.
Various estimates in the previous work \cite{bubeck2016testing}
largely simplify our calculations.

Recall that the threshold $t_{p, d}$ of $\cG(n, p, d, q)$ is determined by
$\E[s_{t_{p,d}}(\inner{\x_i}{\x_j})] = \P(\inner{\x_i}{\x_j} \ge t_{p,d}) = p$.
Different from when $p=1/2$, for $p \in (0, 1)$,
$t_{p, d}$ is now a function of both $p$ and $d$.
The connection probability can be written as
\begin{equation*}
k_{i,j} = (1 - q)p + q s_{t_{p,d}}(\inner{\x_i}{\x_j}).
\end{equation*}
\subsubsection{Estimating the expectation}
Consider the events
\begin{equation} \label{eq:cherry}
E^\Lambda \coloneqq \{\inner{\x_1}{\x_2} \ge t_{p,d},
\inner{\x_1}{\x_3} \ge t_{p,d}\}
\end{equation}
and
\begin{equation} \label{eq:delta}
E^\Delta \coloneqq \{\inner{\x_1}{\x_2} \ge t_{p,d},
\inner{\x_2}{\x_3} \ge t_{p,d},
\inner{\x_3}{\x_1} \ge t_{p,d}\}.
\end{equation}
By rotation invariance on the sphere, we can fix $\x_1 = \e_1$.
Then,
\begin{equation*}
\begin{split}
\P(E^\Lambda)
&= \P(\inner{\x_1}{\x_2} \ge t_{p,d}, \inner{\x_1}{\x_3} \ge t_{p,d})
= \P(\inner{\e_1}{\x_2} \ge t_{p,d}, \inner{\e_1}{\x_3} \ge t_{p,d})\\
&= \P(\inner{\e_1}{\x_2} \ge t_{p,d}) \P(\inner{\e_1}{\x_3} \ge t_{p,d})
= p^2.
\end{split}
\end{equation*}
The following technical lemma from \cite{bubeck2016testing} provides a lower
bound on the probability of $E^\Delta$.
\begin{lemma}[{\cite[Lemma~1]{bubeck2016testing}}]
\label{le:P_D}
For a fixed $p \in (0, 1)$,
there exists a $C_p > 0$ such that for all $d \ge 1/C_p$,
\begin{equation*}
\P(E^\Delta) \ge p^3\biggl(1 + \frac{C_p}{\sqrt{d}}\biggr).
\end{equation*}
\end{lemma}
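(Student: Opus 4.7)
}
The plan is to exploit rotational invariance and reduce the problem to a one-dimensional computation, then extract the $1/\sqrt{d}$ correction via a careful Taylor expansion. First, I would fix $\x_1 = \e_1$ without loss of generality, and parametrize
$\x_2 = (T_2, \sqrt{1-T_2^2}\,\y_2)$ and $\x_3 = (T_3, \sqrt{1-T_3^2}\,\y_3)$,
where $T_i := \inner{\e_1}{\x_i}$ and $\y_i$ are independent uniform on $\SS^{d-2}$, independent of the $T_i$. In these coordinates
\[
\inner{\x_2}{\x_3} = T_2 T_3 + \sqrt{(1-T_2^2)(1-T_3^2)}\,\inner{\y_2}{\y_3},
\]
so conditionally on $T_2, T_3$, the event $\{\inner{\x_2}{\x_3} \ge t_{p,d}\}$ becomes
$\{\inner{\y_2}{\y_3} \ge \tau(T_2,T_3)\}$ with
$\tau(T_2,T_3) := (t_{p,d} - T_2 T_3)/\sqrt{(1-T_2^2)(1-T_3^2)}$.
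Writing
\[
\P(E^\Delta) = \E\bigl[\bb1\{T_2 \ge t_{p,d}\}\bb1\{T_3 \ge t_{p,d}\}\,
\P(\inner{\y_2}{\y_3} \ge \tau(T_2,T_3) \mid T_2,T_3)\bigr],
\]
I would need to show that the inner conditional probability exceeds $p$ by a term of order $1/\sqrt{d}$ in expectation over $T_2,T_3$ conditioned on both exceeding $t_{p,d}$.

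The key quantitative inputs are: (i) by Lemma~\ref{le:delta_pd}, $t_{p,d} = t_p/\sqrt{d} + O(1/d)$, so $t_{p,d} = \Theta_p(1/\sqrt{d})$; (ii) by Proposition~\ref{pr:angle_density} applied on $\SS^{d-2}$, the density $g_{d-1}$ of $\inner{\y_2}{\y_3}$ evaluated near $t_{p,d}$ satisfies $g_{d-1}(t_{p,d}) = \Theta_p(\sqrt{d})$ (this follows from Stirling/Wendel via \eqref{eq:wendel} together with $(1-x^2)^{(d-3)/2}|_{x=t_{p,d}} \to e^{-t_p^2/2}$); and (iii) the conditional distribution of $T_2$ given $T_2 \ge t_{p,d}$ satisfies $\E[T_2 \mid T_2 \ge t_{p,d}] = \Theta_p(1/\sqrt{d})$, so $\E[T_2 T_3 \mid T_2,T_3 \ge t_{p,d}] = \Theta_p(1/d)$ (by independence of $T_2$ and $T_3$). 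A first-order expansion at $t_{p,d}$ gives
\[
\P(\inner{\y_2}{\y_3} \ge \tau(T_2,T_3))
= \P(\inner{\y_2}{\y_3} \ge t_{p,d}) + g_{d-1}(t_{p,d})\bigl(t_{p,d} - \tau(T_2,T_3)\bigr) + \text{remainder},
\]
and the leading baseline $\P(\inner{\y_2}{\y_3} \ge t_{p,d})$ differs from $p$ by only $O_p(1/d)$ (by comparing the $\SS^{d-2}$- and $\SS^{d-1}$-quantile formulas, essentially a repeat of the argument in the proof of Lemma~\ref{le:delta_pd}).

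Combining everything, conditional on $T_2,T_3 \ge t_{p,d}$,
\[
\E\bigl[t_{p,d} - \tau(T_2,T_3)\bigr] = \E[T_2 T_3] + O_p(1/d) = \Theta_p(1/d),
\]
so multiplying by $g_{d-1}(t_{p,d}) = \Theta_p(\sqrt{d})$ yields a positive boost of order $1/\sqrt{d}$ over $p$. Since $\P(T_2 \ge t_{p,d}, T_3 \ge t_{p,d}) = p^2$, this gives $\P(E^\Delta) \ge p^3(1 + C_p/\sqrt{d})$ for $d$ sufficiently large.

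\paragraph{Main obstacle.}
The delicate point is controlling the remainder in the Taylor expansion uniformly in the conditioning event and verifying that the $1/\sqrt{d}$ contribution survives with a strictly positive constant after all approximations. In particular, one needs to argue that the error from replacing $\tau(T_2,T_3)$ by $t_{p,d} - T_2 T_3$ (coming from the $\sqrt{(1-T_2^2)(1-T_3^2)}$ factor), the error from quadratic terms in the expansion of $x \mapsto \P(\inner{\y_2}{\y_3} \ge x)$, and the $O(1/d)$ discrepancy between the $(d-1)$- and $d$-dimensional quantiles, are all of order $o(1/\sqrt{d})$. The cleanest way to handle this uniformly is probably to truncate to the event $\{T_2, T_3 \le C/\sqrt{d}\}$ for some large constant $C$, on which all these errors are controlled, and to show that the complementary event contributes negligibly using standard tail bounds on $\inner{\e_1}{\x_2}$.
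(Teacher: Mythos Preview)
The paper does not give its own proof of this lemma; it is quoted verbatim as \cite[Lemma~1]{bubeck2016testing} and used as a black box. So there is nothing in the present paper to compare your proposal against.

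That said, your plan is essentially the same strategy as the original proof in \cite{bubeck2016testing}: fix $\x_1$, decompose $\x_2,\x_3$ into their first coordinate and an orthogonal $(d-1)$-sphere direction, and compute the conditional probability of the third edge as a perturbation of $p$. The original argument is phrased slightly differently (it works with the density of the inner product rather than an explicit Taylor expansion of the survival function), but the content is the same: the positive bias comes from $\E[T_2 T_3 \mid T_2,T_3 \ge t_{p,d}] = \Theta_p(1/d)$ multiplied by a density of order $\sqrt{d}$. Your identification of the main obstacle---uniform control of the remainder and the $(d-1)$ vs.\ $d$ quantile discrepancy---is accurate, and the truncation to $\{T_2,T_3 \le C/\sqrt{d}\}$ you suggest is a workable way to close it. One small point to be careful with: for $p > 1/2$ the threshold $t_{p,d}$ is negative, so $T_2,T_3$ need not be positive on the conditioning event; you should check that $\E[T_2 \mid T_2 \ge t_{p,d}]$ is still $\Theta_p(1/\sqrt{d})$ and positive (it is, by symmetry of the marginal of $T_2$), so that $\E[T_2 T_3 \mid \cdot]$ remains strictly positive of order $1/d$.
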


By \eqref{eq:gnpd_qxsp},
the expectation of a signed triangle can be written as
\begin{equation*}
\begin{split}
\E_{\cG(n,p,d,q)}[\tau_{\{1,2,3\}}]
&= q^3 \E_{\X}[(s_{t_{p,d}}(\inner{\x_1}{\x_2}) - p)
             (s_{t_{p,d}}(\inner{\x_2}{\x_3}) - p)
             (s_{t_{p,d}}(\inner{\x_3}{\x_1}) - p)]\\
&= q^3 (\P(E^\Delta) - 3p\P(E^\Lambda) + 2p^3)
= q^3 (\P(E^\Delta) - p^3).
\end{split}
\end{equation*}
Using Lemma~\ref{le:P_D}, we have that
\begin{equation*}
\E_{\cG(n,p,d,q)}[\tau_{\{1,2,3\}}] \ge \frac{C_p q^3}{\sqrt{d}}.
\end{equation*}
Therefore, we conclude that there exists a $C_p > 0$ depending only on
$p$ such that for $d \ge 1/C_p$,
\begin{equation} \label{eq:Ep_tau3}
\E_{\cG(n,p,d,q)}[\tau_{3}(G)] \ge \frac{C_p n^3 q^3}{\sqrt{d}}.
\end{equation}

\subsubsection{Estimating the variance}
As in the case when $p = 1/2$,
we bound the variance of the signed triangle count in $\cG(n, p, d, q)$
by estimating the variance and covariances in \eqref{eq:Var_tau3} individually.

Similarly,
the signed triangles with vertex sets $\{1,2,3\}$ and $\{4,5,6\}$ are
still independent.
Hence,
\begin{equation*}
V_{\{1,2,3\},\{4,5,6\}} = 0.
\end{equation*}

For two triangles sharing a single vertex,
by rotation invariance,
we have
\begin{equation*}
\begin{split}
\E[\tau_{\{1,2,3\}}\tau_{\{1,4,5\}}]
&= \E[\E[\tau_{\{1,2,3\}}\tau_{\{1,4,5\}}\mid\x_1]]
= \E[\E[\tau_{\{1,2,3\}} \mid \x_1]\E[\tau_{\{1,4,5\}} \mid \x_1]]\\
&= \E[\E[\tau_{\{1,2,3\}}]\E[\tau_{\{1,4,5\}}]]
= \E[\tau_{\{1,2,3\}}] \E[\tau_{\{1,4,5\}}].
\end{split}
\end{equation*}
Thus again,
\begin{equation*}
V_{\{1,2,3\},\{1,4,5\}} = 0.
\end{equation*}

For two triangles with exactly the same vertices,
\begin{equation*}
\E[(\tau_{\{1,2,3\}})^2]
= \E[\bra_{1,2}^2 \bra_{2,3}^2 \bra_{3,1}^2]
= \E_{\X}[\E[\bra_{1,2}^2\mid\X] \E[\bra_{2,3}^2\mid\X]\E[\bra_{3,1}^2\mid\X]]
\le 1.
\end{equation*}
Hence,
\begin{equation*}
V_{\{1,2,3\},\{1,2,3\}} \le \E[(\tau_{\{1,2,3\}})^2] \le 1.
\end{equation*}

As a last step,
for two triangles sharing two vertices,
\begin{equation*}
\begin{split}
\E[\tau_{\{1,2,3\}}\tau_{\{1,2,4\}}]
&= \E_{\X}[\E[\bra_{1,2}^2\mid\X]
\E[\bra_{2,3} \mid \X] \E[\bra_{3,1} \mid \X]
\E[\bra_{2,4} \mid \X] \E[\bra_{4,1} \mid \X]]\\
&\le \E_{\X}[\E[\bra_{2,3} \mid \X] \E[\bra_{3,1} \mid \X]
           \E[\bra_{2,4} \mid \X] \E[\bra_{4,1} \mid \X]]
= q^4 \E_{\X}[\brs_{2,3}\brs_{3,1}\brs_{2,4}\brs_{4,1}].
\end{split}
\end{equation*}
Recall \eqref{eq:E_tau123*tau124}, which also holds for general $p$:
\begin{equation*}
\E_{\X}[\brs_{2,3}\brs_{3,1}\brs_{2,4}\brs_{4,1}]
= \E_{\x_1, \x_2}[\E_{\x_3}[\brs_{2,3}\brs_{3,1} \mid \x_1,\x_2]^2].
\end{equation*}
An estimate in \cite[Lemma~4]{bubeck2016testing} implies that
\begin{equation} \label{eq:E_s23_s31}
\E_{\x_1,\x_2}[\E_{\x_3}[\brs_{2,3}\brs_{3,1} \mid \x_1,\x_2]^2]
\le \frac{\pi^2}{d}.
\end{equation}
Thus, we have that
\begin{equation*}
\E[\tau_{\{1,2,3\}}\tau_{\{1,2,4\}}] \le \frac{\pi^2q^4}{d}.
\end{equation*}
Therefore, we establish that
\begin{equation*}
V_{\{1,2,3\},\{1,2,4\}} \le \E[\tau_{\{1,2,3\}}\tau_{\{1,2,4\}}]
\le \frac{\pi^2q^4}{d}.
\end{equation*}

Inserting the above estimates in \eqref{eq:Var_tau3},
we conclude that for an absolute constant $C > 0$,
\begin{equation*}
\Var[\tau_3(\cG(n,p,d,q))]
\le C\biggl(n^3 + \frac{n^4q^4}{d}\biggr).
\end{equation*}

\subsubsection{Concluding the proof for
\texorpdfstring{$p \in (0, 1)$}{0 < p < 1}}
From the previous analyses,
for a fixed $p \in (0, 1)$,
there exist constants $C_p > 0$ and $C < \infty$ such that
\begin{equation*}
\E_{\cG(n,p)}[\tau_3(G)] = 0, \quad
\E_{\cG(n,p,d,q)}[\tau_3(G)] \ge \frac{C_pn^3q^3}{\sqrt{d}}
\end{equation*}
and
\begin{equation*}
V_m \coloneqq \max\{\Var[\tau_3(\cG(n, p))],\Var[\tau_3(\cG(n, p, d, q))]\}
\le C\biggl(n^3 + \frac{n^4 q^4}{d}\biggr).
\end{equation*}
Repeating the steps in Section~\ref{sec:signed_triangles_conclusion},
we conclude that there exists a $C_p < \infty$, such that for $d \ge C_p$,
\begin{equation*}
\tv{\cG(n, p)}{\cG(n, p, d, q)}
\ge 1 - C_p\biggl(\frac{d}{n^3q^6} + \frac{1}{n^2q^2}\biggr).
\end{equation*}
Theorem~\ref{th:det_upper} for a fixed $p \in (0, 1)$ is thus established.

\subsection{Signed Cliques}
The method introduced in the preceding subsections proves
an upper bound for detecting the geometry in $\cG(n,p,d,q)$,
while providing an asymptotically powerful test
that is computationally efficient.
However, the upper bound for detection does not match the lower bound in
Theorem~\ref{th:det_lower}.
As a final remark,
we explore whether the possibility results for detection can be improved
via generalizations of the signed triangle statistic.
Two families of extensions are studied: signed cliques and signed cycles.
We show by special examples of subgraphs on four vertices,
as well as those on a fixed number of vertices,
that it is unlikely the detection boundary can be improved with them.

A first generalization of the signed triangle is by increasing
the number of vertices in the set,
resulting in the signed induced complete subgraphs of $G$,
which we simply call \emph{signed cliques}.
Similar to the case of the signed triangle,
let $S \subset V$ be a subset of vertices of $G$ with cardinality
$\lvert S\rvert = k$,
where $k \le n$ is fixed.
$T_S$ is again the indicator that
the edges over the vertex set $S$ form a clique;
namely, the induced subgraph is complete.
Given the adjacency matrix $\A$ of $G$,
$T_S$ can be expressed by
\begin{equation}
T_S = \prod_{\{i, j\} \subset S} a_{i,j}.
\end{equation}
Then, the total number of cliques of size $k$ in $G$,
denoted by $T_k(G)$,
can be written as
\begin{equation}
T_k(G) \coloneqq \sum_{S \in \binom{V}{k}} T_S.
\end{equation}
For a constant $p \in [0,1]$,
define the signed indicator and its count in $G$ by
\begin{equation}
\tau_S \coloneqq \prod_{\{i, j\} \subset S} (a_{i,j} - p)
\quad \text{and} \quad
\tau_k(G) \coloneqq \sum_{S \in \binom{V}{k}} \tau_S.
\end{equation}

We first compute the expectation and variance of the signed clique statistic
in $\cG(n, p)$.

For $\cG(n,p)$, since all edges are independent,
\begin{equation*}
\E[\tau_{[k]}] = \prod_{\{i,j\} \subset [k]}\E[a_{i,j} - p] = 0.
\end{equation*}
Then, the expectation of the signed clique statistic satisfies
\begin{equation} \label{eq:E_tauk_gnp}
\E[\tau_k(\cG(n,p))] = \binom{n}{k}\E[\tau_{[k]}] = 0.
\end{equation}

Consider two sets of vertices $S$ and $S'$ of size k.
If $S = S'$, we have
\begin{equation*}
\E[\tau_{S}\tau_{S'}]
= \E[(\tau_{S})^2] = \prod_{\{i,j\} \in V_1} \E[(a_{i,j} - p)^2]
= (p(1-p))^{\binom{k}{2}}
= (p(1-p))^{k(k-1)/2}.
\end{equation*}
For $S \ne S'$, there is at least one signed edge that appears in
$\tau_{S}$ but not in $\tau_{S'}$.
Suppose this edge is $e$.
By the independence of edges in $\cG(n,p)$,
\begin{equation*}
\E[\tau_{S}\tau_{S'}]
= \E[a_e - p]
  \E\biggl[\tau_{S'}\prod_{e' \in \binom{S}{2}\backslash \{e\}}(a_{e'}-p)\biggr]
= 0.
\end{equation*}
Therefore, the variance of the signed clique statistic in $\cG(n,p)$ satisfies
\begin{equation} \label{eq:V_tauk_gnp}
\begin{split}
\Var[\tau_k(\cG(n,p))]
&= \E\biggl[\biggl(\sum_{S \in \binom{n}{k}} \tau_S\biggr)^2\biggr]
= \sum_{S,S' \in \binom{n}{k}} \E[\tau_{S}\tau_{S'}]
= \binom{n}{k} \E[(\tau_{[k]})^2]\\
&= \binom{n}{k} (p(1-p))^{k(k-1)/2}
\ge C_{k,p} n^k
\end{split}
\end{equation}
for some $C_{k,p} > 0$ depending only on $k$ and $p$.

\subsubsection{Signed quadruples}
For $\cG(n,p,d,q)$,
we start with the special case when $p = 1/2$
and consider the signed clique on four vertices,
called the \emph{signed quadruple}.

\begin{theorem} \label{th:E_quad}
There exists an absolute constant $C$ such that
\begin{equation*}
\abs{\E[\tau_4(\cG(n,1/2,d,q))]} \le \frac{Cn^4q^6}{d}.
\end{equation*}
\end{theorem}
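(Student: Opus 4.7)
The plan is to reduce to the hard random geometric graph via Lemma~\ref{le:gnpdq_gnpd}, then exploit a reflection symmetry specific to $p = 1/2$. By Lemma~\ref{le:gnpdq_gnpd} applied to $H = K_4$ (so $|F| = \binom{4}{2} = 6$) and linearity of expectation over the $\binom{n}{4}$ choices of a $4$-subset,
\[
\E[\tau_4(\cG(n,1/2,d,q))]
= \binom{n}{4}\, q^6\, \E\biggl[\prod_{\{i,j\} \subset \{1,2,3,4\}} \brs_{i,j}\biggr],
\]
where $\brs_{i,j} = s_0(\inner{\x_i}{\x_j}) - 1/2$, using $t_{1/2,d}=0$. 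It therefore suffices to bound the hard-model expectation on the right by an absolute constant divided by $d$.

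For the key step, I would exploit the reflection $\x_1 \mapsto -\x_1$, which preserves the product measure $\mathrm{Unif}(\SS^{d-1})^{\otimes 4}$. Under this map, $\inner{\x_1}{\x_j}$ flips sign for each $j \in \{2,3,4\}$, so $\brs_{1,j}$ flips sign almost surely (the threshold being $0$), while $\brs_{i,j}$ for $i,j \in \{2,3,4\}$ is unchanged. Because vertex $1$ has odd degree $3$ in $K_4$, the six-fold product acquires a factor of $(-1)^3 = -1$, yielding
\[
\E\biggl[\prod_{\{i,j\}} \brs_{i,j}\biggr] = -\E\biggl[\prod_{\{i,j\}} \brs_{i,j}\biggr],
\]
so the expectation is identically zero. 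Thus $\E[\tau_4(\cG(n,1/2,d,q))] = 0$, and the claimed bound holds trivially for any $C > 0$.

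If one prefers a direct derivation that actually produces an explicit $1/d$ factor rather than a symmetry collapse, the alternative plan is to expand
\[
\prod_{e \in E(K_4)} \bigl(\ind{\inner{\x_i}{\x_j} \ge 0} - 1/2\bigr) = \sum_{F \subset E(K_4)} (-1/2)^{6-|F|} \prod_{e \in F} \ind{\inner{\x_i}{\x_j} \ge 0},
\]
and evaluate each $\P_F \coloneqq \P(\bigcap_{(i,j) \in F}\{\inner{\x_i}{\x_j} \ge 0\})$ by spherical geometry. Matchings and star-shaped $F$'s contribute $(1/2)^{|F|}$ exactly by conditional independence arguments, while triangle-containing $F$'s contribute $(1/2)^{|F|} + \Theta(1/\sqrt{d})$ by Lemma~\ref{le:tri_p=1/2}. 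The main obstacle along this route is the bookkeeping needed to verify that the $\Theta(1/\sqrt{d})$ contributions from distinct triangle-containing subsets cancel exactly, leaving only the $O(1/d)$ residual --- a cancellation that the reflection argument above guarantees on abstract grounds. More generally, the same symmetry shows that the expected signed clique count on $K_k$ vanishes identically for every even $k$, since each vertex then has odd degree $k-1$.
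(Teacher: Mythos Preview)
Your reflection argument is correct and in fact proves a stronger statement than the paper: the expectation $\E_{\cG(n,1/2,d)}[\tau_{[4]}]$ is identically zero, not merely $O(1/d)$. The map $\x_1 \mapsto -\x_1$ preserves the law of $(\x_1,\ldots,\x_4)$; almost surely $\inner{\x_1}{\x_j} \ne 0$, so each $\brs_{1,j}$ flips sign while the remaining three factors are unchanged, and the six-fold product negates. Combined with Lemma~\ref{le:gnpdq_gnpd} and the trivial bound $\binom{n}{4} \le n^4$, this yields the theorem with $C$ arbitrary. Your generalization to even $k$ is also correct: in $K_k$ every vertex has odd degree $k-1$, so the same reflection kills $\E_{\cG(n,1/2,d)}[\tau_{[k]}]$ for all even $k$.

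The paper takes a very different and much longer route. It expands $\E[\tau_{[4]}]$ into a signed sum over subgraphs of $K_4$ (the display labeled $Q_1,\ldots,Q_5$), evaluates each piece by explicit spherical geometry, and in particular proves a delicate estimate (Lemma~\ref{le:Q1}) for the $4$-clique probability $\E\bigl[\prod_{i<j}a_{i,j}\bigr]$ via a three-dimensional projection and a spherical-excess argument. The resulting two-sided bounds on $\E[\tau_{[4]}]$ straddle zero and are of width $O(1/d)$, which is consistent with---but does not reveal---the exact vanishing you obtain. What the paper's computation buys is explicit control of the individual pieces (e.g.\ the $4$-clique probability itself, up to $O(1/d)$), information your symmetry argument does not provide; what your approach buys is a one-line proof of the theorem as stated, and the sharper conclusion that the signed-quadruple mean is exactly zero at $p=1/2$.
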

Theorem~\ref{th:E_quad} together with \eqref{eq:E_tauk_gnp} shows that
\begin{equation*}
\abs{\E[\tau_4(\cG(n,1/2,d,q))]-\E[\tau_4(\cG(n,1/2))]} \le \frac{Cn^4q^6}{d}.
\end{equation*}

As we shall see, a lower bound on the variance of the signed quadruple statistic
in $\cG(n,1/2,d,q)$ can be obtained from a more
general argument in Lemma~\ref{le:min_var_clique},
which combined with \eqref{eq:V_tauk_gnp} gives
\begin{equation*}
\min \{\Var[\tau_4(\cG(n,1/2))], \Var[\tau_4(\cG(n,1/2,d,q))]\}
\ge C_{k,p} n^4
\end{equation*}
for some $C_{k,p} > 0$.
Therefore, there exists a constant $C_{k,p} < \infty$ such that
\begin{equation*}
\frac{(\E[\tau_4(\cG(n,1/2,d,q))]-\E[\tau_4(\cG(n,1/2))])^2}
{\min \{\Var[\tau_4(\cG(n,1/2))], \Var[\tau_4(\cG(n,1/2,d,q))]\}}
\le \frac{C_{k,p}n^4 q^{12}}{d^2}.
\end{equation*}
The above display implies that detecting geometry using
the previous method with a signed quadruple statistic is only possible
if $n^2q^6/d \to \infty$.
We see that this is stronger than the condition $n^3q^6/d \to \infty$ given
by the signed triangle.
Note that we use the lower bound on the minimum of the variances instead of
the maximum so that testing either hypothesis is not possible.

We prove Theorem~\ref{th:E_quad} in the following.
A key estimation is that the expected signed quadruple
in $\cG(n,1/2,d)$ is at most of order $1/d$,
formally stated as the following lemma.
\begin{lemma} \label{le:E_tau4_gn1/2}
There exists an absolute constant $C$ such that
\begin{equation*}
\abs{\E_{\cG(n,1/2,d)}[\tau_{[4]}]} \le \frac{C}{d}.
\end{equation*}
\end{lemma}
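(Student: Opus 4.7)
The plan is to invoke the antipodal symmetry of the uniform measure on $\SS^{d-1}$; this in fact yields the sharper conclusion that $\E_{\cG(n,1/2,d)}[\tau_{[4]}] = 0$ identically, so the claimed bound holds with $C = 0$.

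Since $(\x_1,\x_2,\x_3,\x_4)$ are i.i.d.\ uniform on $\SS^{d-1}$, the joint distribution is invariant under the negation $\x_1 \mapsto -\x_1$ with $\x_2,\x_3,\x_4$ held fixed. Under this map the inner product $\inner{\x_1}{\x_j}$ flips sign for $j = 2,3,4$, whereas $\inner{\x_i}{\x_j}$ for $i,j \in \{2,3,4\}$ is unchanged. Because $p = 1/2$ and the inner-product distribution is symmetric around zero, $t_{1/2,d}=0$, so $\brs_{i,j} = s_0(\inner{\x_i}{\x_j}) - 1/2 = \tfrac{1}{2}\operatorname{sign}(\inner{\x_i}{\x_j})$ almost surely; hence $\brs_{1,j} \mapsto -\brs_{1,j}$ a.s.\ for $j=2,3,4$, while the remaining $\brs_{i,j}$ are unchanged. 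The signed quadruple $\tau_{[4]} = \prod_{\{i,j\}\subset[4]}\brs_{i,j}$ is a product of six factors, exactly three of which are incident to vertex $1$, so the map sends $\tau_{[4]}$ to $(-1)^{3}\tau_{[4]} = -\tau_{[4]}$ a.s. Invariance of the distribution then gives $\E[\tau_{[4]}] = -\E[\tau_{[4]}]$, whence $\E[\tau_{[4]}] = 0$. The same reasoning more generally shows $\E_{\cG(n,1/2,d)}[\tau_{[k]}] = 0$ for every even $k$, since then each vertex of $K_k$ has odd degree $k-1$.

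There is essentially no technical obstacle here; the only minor subtlety is that $s_0$ and $\tfrac{1}{2}\operatorname{sign}$ disagree on the measure-zero event $\{\inner{\x_i}{\x_j}=0\}$, which does not affect expectations. Should a direct computational route paralleling Lemma~\ref{le:tri_p=1/2} be preferred, one can instead expand
\begin{equation*}
\E[\tau_{[4]}] = \sum_{F \subset \binom{[4]}{2}} (-1/2)^{6-|F|}\,\P\Bigl(\bigcap_{e \in F}\{\inner{\x_{e_1}}{\x_{e_2}} \ge 0\}\Bigr)
\end{equation*}
and evaluate each joint orthant probability using the angle density from Proposition~\ref{pr:angle_density}. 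The contributions of subgraphs $F$ that are matchings, stars, or paths equal $2^{-|F|}$ exactly by rotational symmetry and independence; the contributions from triangles, $4$-cycles, $K_4$ minus an edge, and $K_4$ itself give $1/\sqrt{d}$ and $1/d$ corrections that, by the signed-sum structure, must cancel to leading order, consistently with the symmetry argument. The main obstacle on that alternative route is estimating the higher-order orthant probabilities on the sphere to order $1/d$, which is considerably more involved than the symmetry argument above.
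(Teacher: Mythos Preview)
Your symmetry argument is correct and much slicker than the paper's route. The paper proves the lemma by an explicit expansion
\[
\E[\tau_{[4]}] = Q_1 - Q_2 + Q_3 + Q_4 - Q_5,
\]
where each $Q_i$ is a combination of subgraph probabilities, computes $Q_2,\ldots,Q_5$ exactly in terms of the integrals $\gamma$ and $\eta$ via the projection arguments of Lemmas~\ref{le:tri_p=1/2} and~\ref{le:sqr}, and then devotes Lemma~\ref{le:Q1} to a fairly intricate three-dimensional estimate showing $Q_1 = \tfrac{1}{2}\gamma + \tfrac{1}{2}\eta + O(1/d)$. Assembling these yields $\E[\tau_{[4]}] = -\eta + O(1/d) = O(1/d)$, which is consistent with (but weaker than) your exact identity $\E[\tau_{[4]}] = 0$.

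What your approach buys is obvious: a one-line proof in place of several pages, and it immediately generalizes to show $\E_{\cG(n,1/2,d)}[\tau_{[k]}]=0$ for all even $k$, resolving (for $p=1/2$, even $k$) the paper's Conjecture~\ref{cj:E_tauk_gnpd} in the strongest possible form. It also strengthens Theorem~\ref{th:E_quad} to $\E[\tau_4(\cG(n,1/2,d,q))]=0$, so the signed-quadruple test is in fact entirely powerless at $p=1/2$, not merely weaker than signed triangles. What the paper's approach buys is the intermediate estimate of Lemma~\ref{le:Q1}, namely $\P(E^\Delta_{K_4}) = \tfrac{1}{64} + \tfrac{1}{2}\gamma + \tfrac{1}{2}\eta + \Theta(1/d)$, which may be of independent interest; your argument says nothing about this quantity directly. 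Note also that your parity trick is specific to $p=1/2$ (where the marginal of $\brs_{i,j}$ is symmetric), so it does not extend to general $p$, whereas the paper's computational machinery in principle does.
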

The proof of Lemma~\ref{le:E_tau4_gn1/2} is divided into estimating several
quantities.

As computed before,
for a sample from $\cG(n,1/2,d)$ with adjacency matrix $\A=[a_{i,j}]$,
by conditioning on $\x_1$ and rotation invariance,
\begin{equation*}
\E[a_{1,2}a_{1,3}]= \E[\E[a_{1,2}a_{1,3}\mid\x_1]]
= \E[\E[a_{1,2}\mid\x_1]\E[a_{1,3}\mid\x_1]]= \biggl(\frac{1}{2}\biggr)^2.
\end{equation*}
We also have
$\E[a_{1,2}] = \frac{1}{2}$
by definition.

The expected signed quadruple in $\cG(n,1/2,d)$ can be written as
\begin{equation} \label{eq:E_tau4_expand}
\begin{split}
&\E_{\cG(n,1/2,d)}[\tau_{[4]}]
= \E\biggl[\prod_{i<j}^4 \bra_{i,j}\biggr]
= \E\biggl[\prod_{i<j}^4 \biggl(a_{i,j}-\frac{1}{2}\biggr)\biggr]
- \biggl(\frac{1}{2} - \frac{1}{2}\biggr)^6\\
&\qquad=
\underbrace{\binom{6}{6}\biggl(\E[a_{1,2}a_{1,3}a_{1,4}a_{2,3}a_{2,4}a_{3,4}]
   - \biggl(\frac{1}{2}\biggr)^6\biggr)}_{Q_1}
- \underbrace{\binom{6}{5}\biggl(\frac{1}{2}\biggr)
    \biggl(\E[a_{1,2}a_{1,3}a_{2,3}a_{1,4}a_{2,4}]
    - \biggl(\frac{1}{2}\biggr)^5\biggr)}_{Q_2}\\
&\qquad\phantom{{}={}}
+ \underbrace{\binom{6}{4}\frac{1}{5}\biggl(\frac{1}{2}\biggr)^2
              \biggl(\E[a_{1,3}a_{2,3}a_{1,4}a_{2,4}]
              - \biggl(\frac{1}{2}\biggr)^4\biggr)}_{Q_3}
+ \underbrace{\binom{6}{4}\frac{4}{5}\biggl(\frac{1}{2}\biggr)^2
      \biggl(\E[a_{1,2}a_{2,3}a_{3,1}a_{1,4}]
    - \biggl(\frac{1}{2}\biggr)^4\biggr)}_{Q_4}\\
&\qquad\phantom{{}={}}
- \underbrace{\binom{6}{3}\frac{1}{5}\biggl(\frac{1}{2}\biggr)^3
              \biggl(\E[a_{1,2}a_{2,3}a_{3,1}]
- \biggl(\frac{1}{2}\biggr)^3\biggr)}_{Q_5},
\end{split}
\end{equation}
where the fractions are from simple combinatorial calculations.
In the following we compute and estimate $Q_{1}, \ldots, Q_{5}$.

Following the definitions in the proof of Lemma~\ref{le:tri_p=1/2},
we define the density
\begin{equation*}
h(\theta) \coloneqq \frac{1}{\zeta} \sin^{d-2}\theta,
\end{equation*}
where $\theta \in [0, \pi]$ and the normalization factor
\begin{equation*}
\zeta \coloneqq \int_0^\pi \sin^{d-2}\theta\,d\theta
= \frac{\sqrt{\pi} \Gamma\bigl(\frac{d-1}{2}\bigr)}
{\Gamma\bigl(\frac{d}{2}\bigr)}.
\end{equation*}

Let
\begin{equation} \label{eq:def_gamma}
\gamma \coloneqq \int_0^{\pi/2} \frac{\pi/2 - \theta}{2\pi} h(\theta)\,d\theta.
\end{equation}

By the computation in Lemma~\ref{le:tri_p=1/2},
$\E[a_{1,2}a_{2,3}a_{3,1}] - 1/8 = \gamma$.
Hence, we have
\begin{equation*}
Q_5 = \frac{1}{2}\gamma.
\end{equation*}
Lemma~\ref{le:tri_p=1/2} also shows that
\begin{equation*}
\frac{1}{2\pi\sqrt{2\pi}}\cdot\frac{1}{\sqrt{d}}
\le \gamma
\le \frac{1}{4\sqrt{\pi}}\cdot\frac{1}{\sqrt{d}}.
\end{equation*}

By conditional independence of $a_{1,4}$ and $a_{1,2}a_{2,3}a_{3,1}$
and rotation invariance,
\begin{equation*}
\begin{split}
\E[a_{1,2}a_{2,3}a_{3,1}a_{4,1}]
&= \E_{\x_1}[\E[a_{1,2}a_{2,3}a_{3,1}a_{1,4}\vert \x_1]]
= \E_{\x_1}[\E[a_{1,2}a_{2,3}a_{3,1}\vert \x_1]\E[a_{1,4}\vert \x_1]]\\
&= \frac{1}{2}\E[a_{1,2}a_{2,3}a_{3,1}].
\end{split}
\end{equation*}
Hence, we have
\begin{equation*}
Q_4 = \frac{3}{2}\gamma.
\end{equation*}

Let
\begin{equation} \label{eq:def_eta}
\eta \coloneqq \int_0^{\pi/2}\biggl(\frac{\pi/2-\theta}{2\pi}\biggr)^2h(\theta)
\,d\theta.
\end{equation}
By \eqref{eq:eta_bounds},
\begin{equation*}
\frac{1}{4\pi^2}\cdot\frac{1}{d} \le \eta \le \frac{1}{16}\cdot\frac{1}{d}.
\end{equation*}

\begin{lemma} \label{le:sqr}
Let $\eta$ be defined in \eqref{eq:def_eta}.
Then,
\begin{equation*}
\E[a_{1,3}a_{2,3}a_{1,4}a_{2,4}] = \frac{1}{16} + 2\eta.
\end{equation*}
\end{lemma}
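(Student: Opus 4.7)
The plan is to first condition on $\x_1$ and $\x_2$, which decouples the pairs of edges $(a_{1,3},a_{2,3})$ (determined by $\x_3$) and $(a_{1,4},a_{2,4})$ (determined by $\x_4$). Since $\x_3$ and $\x_4$ are independent of each other and identically distributed, the conditional expectation factors as the square of a single conditional expectation, yielding
\begin{equation*}
\E[a_{1,3}a_{2,3}a_{1,4}a_{2,4}\mid \x_1,\x_2]
= \bigl(\E[a_{1,3}a_{2,3}\mid \x_1,\x_2]\bigr)^2.
\end{equation*}
By exactly the projection-onto-the-plane argument used in the proof of Lemma~\ref{le:tri_p=1/2} (with $t_{p,d}=0$ when $p=1/2$, so that the ``good'' region for $\x_3$ is the intersection of two half-spaces with normals $\x_1,\x_2$), this conditional expectation equals $\frac{\pi-\theta(\x_1,\x_2)}{2\pi}$.

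Next, I take the outer expectation over $\theta\coloneqq\theta(\x_1,\x_2)$, which has density $h(\theta)=\zeta^{-1}\sin^{d-2}\theta$ on $[0,\pi]$ by Proposition~\ref{pr:angle_density}. The key algebraic step is the shift
\begin{equation*}
\frac{\pi-\theta}{2\pi} = \frac{1}{4} + \frac{\pi/2-\theta}{2\pi},
\end{equation*}
which upon squaring gives
\begin{equation*}
\biggl(\frac{\pi-\theta}{2\pi}\biggr)^2
= \frac{1}{16} + \frac{1}{2}\cdot\frac{\pi/2-\theta}{2\pi}
+ \biggl(\frac{\pi/2-\theta}{2\pi}\biggr)^2.
\end{equation*}

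Integrating against $h$ term by term, the constant contributes $1/16$. For the linear term, I use that $h$ is symmetric about $\pi/2$ (since $\sin^{d-2}\theta=\sin^{d-2}(\pi-\theta)$), so the substitution $\theta\mapsto\pi-\theta$ shows that $\int_0^\pi (\pi/2-\theta)h(\theta)\,d\theta=0$. For the quadratic term, the same symmetry gives
\begin{equation*}
\int_0^\pi \biggl(\frac{\pi/2-\theta}{2\pi}\biggr)^2 h(\theta)\,d\theta
= 2\int_0^{\pi/2} \biggl(\frac{\pi/2-\theta}{2\pi}\biggr)^2 h(\theta)\,d\theta
= 2\eta,
\end{equation*}
by the definition of $\eta$ in~\eqref{eq:def_eta}. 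Combining these three contributions yields the claimed identity $\E[a_{1,3}a_{2,3}a_{1,4}a_{2,4}] = \tfrac{1}{16}+2\eta$. There is no real obstacle here; the only thing to double-check is the shift identity $\tfrac{1}{4}=\tfrac{\pi/2}{2\pi}$ and the symmetry of $h$ about $\pi/2$, both of which are immediate.
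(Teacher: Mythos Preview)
Your proof is correct and follows essentially the same approach as the paper: condition on $\x_1,\x_2$ to get the square of $\frac{\pi-\theta}{2\pi}$, then integrate against the density $h$. The only minor difference is that the paper splits the integral at $\pi/2$ and computes each half separately (introducing $\gamma$ terms that cancel), whereas you exploit the symmetry of $h$ about $\pi/2$ directly to kill the linear term, which is slightly cleaner.
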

\begin{proof}
By conditional independence of $a_{1,3}a_{2,3}$ and $a_{1,4}a_{2,4}$,
\begin{equation*}
\begin{split}
\E[a_{1,3}a_{2,3}a_{1,4}a_{2,4}]
&= \E[\E[a_{1,3}a_{2,3}a_{1,4}a_{2,4}\mid \x_1, \x_2]]
= \E[\E[a_{1,3}a_{2,3}\mid \x_1, \x_2]\E[a_{1,4}a_{2,4}\mid \x_1, \x_2]]\\
&= \E[\E[a_{1,3}a_{2,3}\mid \x_1, \x_2]^2].
\end{split}
\end{equation*}
The last equality is because $\x_3$ and $\x_4$ are identically distributed.

Similar to the proof of Lemma~\ref{le:tri_p=1/2},
we can fix the space spanned by $\x_1$ and $\x_2$.
The angle between them has the density $h(\theta)$
given in Proposition~\ref{pr:angle_density}.
We have that $a_{1,3}a_{2,3} = 1$ if and only if the projection of $\x_3$ onto
this plane lies in $[\theta-\pi/2,\pi/2]$.
Therefore,
\begin{equation} \label{eq:quadrilateral}
\E[a_{1,3}a_{2,3}a_{1,4}a_{2,4}]
= \int_0^\pi \biggl(\frac{\pi-\theta}{2\pi}\biggr)^2h(\theta)\,d\theta
= \int_0^{\pi/2} \biggl(\frac{\pi-\theta}{2\pi}\biggr)^2h(\theta)\,d\theta
  + \int_{\pi/2}^\pi \biggl(\frac{\pi-\theta}{2\pi}\biggr)^2h(\theta)\,d\theta.
\end{equation}
For the first integral in the above display,
\begin{equation*}
\int_0^{\pi/2} \biggl(\frac{\pi-\theta}{2\pi}\biggr)^2h(\theta)\,d\theta
= \int_0^{\pi/2}
  \biggl(\frac{\pi/2 + \pi/2-\theta}{2\pi}\biggr)^2h(\theta)\,d\theta
= \frac{1}{32} + \frac{1}{2}\gamma + \eta.
\end{equation*}
For the second integral,
by the symmetry of the $\sin$ function,
\begin{equation*}
\int_{\pi/2}^\pi \biggl(\frac{\pi-\theta}{2\pi}\biggr)^2h(\theta)\,d\theta
= \int_0^{\pi/2} \biggl(\frac{\theta}{2\pi}\biggr)^2h(\theta)\,d\theta
= \int_0^{\pi/2} \biggl(\frac{\pi/2 - (\pi/2 - \theta)}{2\pi}\biggr)^2
  h(\theta)\,d\theta
= \frac{1}{32} - \frac{1}{2}\gamma + \eta.
\end{equation*}
The claim directly follows by adding them.
\end{proof}
By Lemma~\ref{le:sqr}, we have that
\begin{equation*}
Q_3 = \frac{3}{2}\eta.
\end{equation*}

Since
\begin{equation*}
\E[a_{1,2}a_{1,3}a_{2,3}a_{1,4}a_{2,4}]
= \int_0^{\pi/2} \biggl(\frac{\pi-\theta}{2\pi}\biggr)^2h(\theta)\,d\theta,
\end{equation*}
which equals the first integral in \eqref{eq:quadrilateral},
we directly have the following lemma.
\begin{lemma}
Let $\gamma$ and $\eta$ be defined in \eqref{eq:def_gamma} and
\eqref{eq:def_eta} respectively.
Then,
\begin{equation*}
\E[a_{1,2}a_{1,3}a_{2,3}a_{1,4}a_{2,4}]
= \frac{1}{32} + \frac{1}{2}\gamma + \eta.
\end{equation*}
\end{lemma}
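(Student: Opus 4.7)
The plan is to observe that this expectation is exactly the first of the two integrals already appearing in equation~\eqref{eq:quadrilateral} during the proof of Lemma~\ref{le:sqr}, so essentially no new work is needed beyond identifying it and carrying out the same linear decomposition of the integrand.

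Concretely, the edge set $\{(1,2),(1,3),(2,3),(1,4),(2,4)\}$ is the ``book'' consisting of the triangle on $\{1,2,3\}$ together with the edges from vertex $4$ to vertices $1$ and $2$; the only missing potential edge on $\{1,2,3,4\}$ is $(3,4)$. The structural upshot is that, given $\x_1$ and $\x_2$, the factor $a_{1,3}a_{2,3}$ depends only on $\x_3$ and the factor $a_{1,4}a_{2,4}$ depends only on $\x_4$, so these two factors are conditionally independent (and identically distributed). So first I would condition on $\x_1,\x_2$ and factorize:
\begin{equation*}
\E[a_{1,2}a_{1,3}a_{2,3}a_{1,4}a_{2,4}]
= \E\bigl[a_{1,2}\cdot \E[a_{1,3}a_{2,3}\mid \x_1,\x_2]\cdot \E[a_{1,4}a_{2,4}\mid \x_1,\x_2]\bigr].
\end{equation*}
The factor $a_{1,2}$ is the indicator $\bb1\{\theta(\x_1,\x_2)\le \pi/2\}$, and by the same projection argument used in the proofs of Lemma~\ref{le:tri_p=1/2} and Lemma~\ref{le:sqr}, each inner conditional expectation equals $(\pi - \theta(\x_1,\x_2))/(2\pi)$. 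Using Proposition~\ref{pr:angle_density} for the density of $\theta(\x_1,\x_2)$ and restricting the domain to $[0,\pi/2]$ via the indicator yields
\begin{equation*}
\E[a_{1,2}a_{1,3}a_{2,3}a_{1,4}a_{2,4}]
= \int_0^{\pi/2}\biggl(\frac{\pi-\theta}{2\pi}\biggr)^2 h(\theta)\,d\theta.
\end{equation*}

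The final step is the algebraic decomposition $(\pi-\theta)/(2\pi) = 1/4 + (\pi/2-\theta)/(2\pi)$. Squaring and integrating term by term against $h(\theta)$ on $[0,\pi/2]$, the constant piece contributes $\tfrac{1}{16}\int_0^{\pi/2} h = \tfrac{1}{32}$ (by symmetry of $h$ about $\pi/2$), the cross term contributes $\tfrac{1}{2}\gamma$ by the definition~\eqref{eq:def_gamma}, and the squared term contributes $\eta$ by the definition~\eqref{eq:def_eta}. This is the very calculation already performed in~\eqref{eq:quadrilateral}, so one simply cites it. There is no real obstacle here: the lemma is essentially a corollary of the conditional-independence factorization from the previous proof.
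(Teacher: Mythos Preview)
Your proposal is correct and follows essentially the same approach as the paper. The paper's own proof is even terser: it simply notes that $\E[a_{1,2}a_{1,3}a_{2,3}a_{1,4}a_{2,4}] = \int_0^{\pi/2}\bigl(\frac{\pi-\theta}{2\pi}\bigr)^2 h(\theta)\,d\theta$, observes this is the first integral in~\eqref{eq:quadrilateral}, and cites the decomposition already carried out there to obtain $\tfrac{1}{32}+\tfrac{1}{2}\gamma+\eta$.
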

Therefore, we have
\begin{equation*}
Q_2 = \frac{3}{2}\gamma + 3\eta.
\end{equation*}

Plugging the previous estimates into \eqref{eq:E_tauk_gnp},
we obtain that
\begin{equation} \label{eq:tau}
\E[\tau_{[4]}] = Q_1 - Q_2 + Q_3 + Q_4 - Q_5
= Q_1 - \frac{1}{2}\gamma - \frac{3}{2}\eta.
\end{equation}
An estimation for $Q_1$ is provided in the following lemma.
\begin{lemma} \label{le:Q1}
Let $\gamma$ and $\eta$ be defined before.
Then,
\begin{equation*}
\frac{1}{2}\gamma + \frac{1}{2}\eta + \frac{1}{16\pi^2}\cdot\frac{1}{d}
\le \E\biggl[\prod_{i<j}^4 a_{i,j}\biggr] - \frac{1}{64}
\le \frac{1}{2}\gamma + \frac{1}{2}\eta + \frac{1}{8\pi}\cdot\frac{1}{d}.
\end{equation*}
\end{lemma}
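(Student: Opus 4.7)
The plan is to reduce the lemma to a sharp estimate of the signed-quadruple expectation $\E[\tau_{[4]}]$ via an exact algebraic decomposition, and then bound the latter by decomposing $\x_3,\x_4$ into in-plane and perpendicular components relative to $\mathrm{span}(\x_1,\x_2)$.

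First, I would expand
\begin{equation*}
\prod_{i<j}^4 a_{i,j}
= \prod_{i<j}^4 \biggl(\frac{1}{2}+\bra_{i,j}\biggr)
= \sum_{S\subseteq E(K_4)} \biggl(\frac{1}{2}\biggr)^{6-|S|}\prod_{e\in S}\bra_e,
\end{equation*}
take expectations, and classify the $2^6$ subsets by the isomorphism type of $(V(S),S)$. The key observation is that any $S$ containing some edge $e$ with an endpoint $v$ not shared with any other edge of $S$ contributes zero: by rotational invariance on $\SS^{d-1}$, conditioning on $\{\x_w:w\ne v\}$ gives $\E[\bra_e\mid\cdots]=0$. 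This kills all size-$\le 2$ subsets, the stars $K_{1,3}$ and paths $P_4$ at size $3$, and the ``paws'' (triangle with pendant) at size $4$. For the size-$5$ subsets (each being $K_4$ minus one edge $\{i,j\}$), I would condition on $\x_i,\x_j$; the five remaining factors split into two conditionally independent ``cherries'' at the other two vertices, whose product is an \emph{even} function of $\theta(\x_i,\x_j)$ about $\pi/2$, whereas $\bra_{i,j}=\ind{\theta\le\pi/2}-\frac{1}{2}$ is \emph{odd} about $\pi/2$; since the density $h(\theta)$ is symmetric about $\pi/2$, the integral vanishes.

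The surviving contributions are: the baseline $(1/2)^6=1/64$; the four triangles, each equal to $\gamma$ by Lemma~\ref{le:tri_p=1/2}, totaling $\gamma/2$; the three $4$-cycles, each equal to $\int_0^\pi ((\pi/2-\theta)/(2\pi))^2 h(\theta)\,d\theta = 2\eta$ (condition on a diagonal pair of vertices so that the two remaining cherries become conditionally i.i.d.\ with expectation $(\pi/2-\theta)/(2\pi)$, and use the change of variables $\phi=\pi-\theta$ to double the integral of $\eta$), totaling $3\eta/2$; and the full signed clique $\E[\tau_{[4]}]$. Combining gives the identity
\begin{equation*}
\E\biggl[\prod_{i<j}^4 a_{i,j}\biggr] - \frac{1}{64}
= \frac{\gamma}{2} + \frac{3\eta}{2} + \E[\tau_{[4]}],
\end{equation*}
reducing the lemma to the sharp estimate $-\eta + \frac{1}{16\pi^2 d} \le \E[\tau_{[4]}] \le -\eta + \frac{1}{8\pi d}$.

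This last estimate is the main obstacle. To handle it I would condition on $\x_1,\x_2$ and decompose $\x_j = u_j + v_j$ for $j=3,4$, where $u_j$ is the orthogonal projection of $\x_j$ onto $\mathrm{span}(\x_1,\x_2)$ and $v_j$ is perpendicular. Five of the six signed indicators depend only on $(u_3,u_4)$, while $\bra_{3,4}$ depends on $\inner{\x_3}{\x_4}=\inner{u_3}{u_4}+\inner{v_3}{v_4}$; conditional on $\|v_3\|,\|v_4\|$, the cross-term $\inner{v_3}{v_4}$ is, up to scaling, the inner product of two i.i.d.\ uniform vectors on $\SS^{d-3}$, hence symmetric with variance $\Theta(1/d)$. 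Writing $\bra_{3,4} = (\ind{\inner{u_3}{u_4}\ge 0}-\frac{1}{2}) + (\ind{\inner{\x_3}{\x_4}\ge 0}-\ind{\inner{u_3}{u_4}\ge 0})$ splits $\E[\tau_{[4]}]$ into a purely $2$-dimensional ``in-plane'' contribution (evaluable in closed form against the joint $\mathrm{Beta}\times\mathrm{Unif}$ law of $(u_3,u_4)$, and producing the $-\eta$ cancellation) and a ``perpendicular correction'' of order $1/d$. Precisely tracking the constants in this correction via the elementary inequalities $\sin\theta\le\theta\le(\pi/2)\sin\theta$ on $[0,\pi/2]$ together with Wendel's Gamma-ratio bounds (as used in the proof of Lemma~\ref{le:delta_pd}) is where the bulk of the work lies; done carefully, it yields the asserted constants $\frac{1}{16\pi^2}$ and $\frac{1}{8\pi}$.
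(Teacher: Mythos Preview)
Your reduction to the identity
\[
\E\biggl[\prod_{i<j}^4 a_{i,j}\biggr] - \frac{1}{64} \;=\; \frac{\gamma}{2} + \frac{3\eta}{2} + \E[\tau_{[4]}]
\]
is correct and is exactly what the paper records as~\eqref{eq:tau}. (One labeling slip: for the size-$5$ subset $K_4\setminus\{i,j\}$ you should condition on the \emph{other} pair $\x_k,\x_l$; then $\bra_{k,l}$ is the odd factor and the two cherries sit at $i$ and $j$, giving the even factor $\bigl((\pi/2-\theta_{k,l})/(2\pi)\bigr)^2$. As written you condition on $\x_i,\x_j$ and then invoke $\bra_{i,j}$, which is not among the five factors.)

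The genuine gap is in your plan for $\E[\tau_{[4]}]$. The in-plane/perpendicular split does not yield a small correction. Since $\|u_j\|^2\sim\mathrm{Beta}(1,\tfrac{d-2}{2})$ has mean $2/d$, the in-plane inner product satisfies $|\langle u_3,u_4\rangle|\asymp 1/d$, whereas $|\langle v_3,v_4\rangle|\asymp 1/\sqrt{d}$; hence $\operatorname{sgn}\langle\x_3,\x_4\rangle$ is dominated by the \emph{perpendicular} part. Concretely, writing $c=\cos(\psi_3-\psi_4)$ and averaging the symmetric variable $W=\langle v_3,v_4\rangle$ first, one gets
\[
\E\bigl[\ind{\langle\x_3,\x_4\rangle\ge 0}-\ind{\langle u_3,u_4\rangle\ge 0}\,\big|\,c\bigr]
= -\bigl(\ind{c\ge 0}-\tfrac12\bigr) + \operatorname{sgn}(c)\,\epsilon(|c|),
\]
with $\epsilon(|c|)=\P(0\le W<\|u_3\|\|u_4\|\,|c|)=O(1/\sqrt{d})$. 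Thus your ``correction'' term equals $-A+O(1/\sqrt{d})$ rather than $O(1/d)$: the split produces two $O(1)$ pieces that nearly cancel, and what survives is $\E[P\operatorname{sgn}(c)\epsilon(|c|)]$, which still has to be analyzed from scratch. There is also no visible mechanism by which the purely in-plane term $A$ evaluates to $-\eta$; it equals $\E_\theta[\bra_{1,2}\,G(\theta)]$ for a nontrivial double integral $G$ over $(\psi_3,\psi_4)$, and extracting a closed form from it is not easier than the original problem.

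The paper bypasses this by computing $Q_1=\E[\prod a_{i,j}]-1/64$ directly, projecting to the \emph{three}-dimensional subspace $\mathrm{span}(\x_1,\x_2,\x_3)$ rather than the $2$-plane. With explicit angular coordinates $(\theta,\psi,\varphi)$ and density $\tfrac{1}{2\pi}h(\theta)g(\varphi)$ (Proposition~\ref{pr:ang_den_phi}), the conditional probability that $\x_4$ joins all three is the spherical excess $(2\pi-\theta_{1,2}-\theta_{1,3}-\theta_{2,3})/(4\pi)$. This yields three integrals $I_1,I_2,I_3$; the first evaluates exactly to $\tfrac12(\tfrac{1}{32}+\tfrac{\gamma}{2}+\eta)$, the third equals $I_2$ by symmetry, and $I_2$ splits (via the Taylor series of $\arcsin$) into an exact piece $I_{2(a)}=\gamma/8$ plus a remainder $I_{2(b)}$ sandwiched between $\tfrac{1}{32\pi^2 d}$ and $\tfrac{1}{16\pi d}$ using $\sin\theta\le\theta\le\tfrac{\pi}{2}\sin\theta$ and Wendel's bounds. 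The $3$D projection captures enough geometry that the only approximation left is this elementary sandwich on a one-dimensional integral, which is what produces the stated constants.
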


The proof of Lemma~\ref{le:Q1} involves extending the argument in the proof of
Lemma~\ref{le:tri_p=1/2} to a three-dimensional subspace.
Before proving Lemma~\ref{le:Q1}, we show the following claim concerning the
distribution of the angle between a uniform random vector in $\RR^d$ and an
arbitrary two-dimensional plane.
\begin{proposition} \label{pr:ang_den_phi}
Let $\x$ be a uniform random point in $\SS^{d-1}$.
Let $\varphi \in [0,\pi/2]$ be the angle between the vector $\x$
and any fixed $2$-dimensional subspace.
Then, the density $g(\varphi)$ satisfies
\begin{equation*}
g(\varphi) =(d-2)\sin^{d-3}\varphi\cos\varphi.
\end{equation*}
\end{proposition}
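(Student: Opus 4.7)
The plan is to reduce the claim to a routine change of variables by exploiting rotation invariance and the standard Gaussian representation of uniform points on the sphere. Since any two $2$-dimensional subspaces of $\RR^d$ are related by an orthogonal transformation and $\x$ is rotation invariant, I may without loss of generality take the fixed subspace to be $V \coloneqq \mathrm{span}(\e_1, \e_2)$. Writing $\x = \z/\norm{\z}$ with $\z \sim \cN(\0, \I_d)$ (as used several times already in the paper), the angle $\varphi$ between $\x$ and $V$ satisfies
\begin{equation*}
\cos^2\varphi = \frac{\norm{\mathrm{Proj}_V \x}^2}{\norm{\x}^2}
= \frac{z_1^2 + z_2^2}{\sum_{i=1}^d z_i^2}.
\end{equation*}

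Next I would invoke the independence of $z_1^2 + z_2^2 \sim \chi^2(2)$ and $\sum_{i=3}^d z_i^2 \sim \chi^2(d-2)$ to conclude that $U \coloneqq \cos^2\varphi$ has a $\mathrm{Beta}(1, (d-2)/2)$ distribution. Using $\rB(1, (d-2)/2) = 2/(d-2)$, the density of $U$ on $[0,1]$ is
\begin{equation*}
f_U(u) = \frac{d-2}{2}(1-u)^{(d-4)/2}.
\end{equation*}

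Then I would change variables to $\varphi \in [0, \pi/2]$, where $u = \cos^2\varphi$ gives $|du/d\varphi| = 2\sin\varphi\cos\varphi$ and $1 - u = \sin^2\varphi$. This yields
\begin{equation*}
g(\varphi) = f_U(\cos^2\varphi)\cdot 2\sin\varphi\cos\varphi
= \frac{d-2}{2}\sin^{d-4}\varphi \cdot 2\sin\varphi\cos\varphi
= (d-2)\sin^{d-3}\varphi\cos\varphi,
\end{equation*}
which is the claimed formula.

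There is essentially no obstacle here: the calculation is a direct consequence of the Gaussian-over-its-norm construction combined with the elementary beta/chi-squared identity, exactly parallel to the derivation of the density $h(\theta)$ in Proposition~\ref{pr:angle_density}. The only care needed is to check the Jacobian and normalization (which one can verify independently by $\int_0^{\pi/2}(d-2)\sin^{d-3}\varphi\cos\varphi\,d\varphi = \sin^{d-2}\varphi \big|_0^{\pi/2} = 1$).
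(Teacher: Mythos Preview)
Your proof is correct and follows essentially the same approach as the paper: both use rotation invariance to fix $V = \mathrm{span}(\e_1,\e_2)$, represent $\x = \z/\norm{\z}$ with Gaussian $\z$, and identify $\cos^2\varphi = (z_1^2+z_2^2)/\sum_i z_i^2$ as $\mathrm{Beta}(1,(d-2)/2)$ via the chi-square ratio. The only cosmetic difference is that the paper writes out the CDF as a beta integral and differentiates, whereas you apply the change of variables directly to the beta density.
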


\begin{proof}
Let $\z \sim \cN(\0,\I_d)$ be a $d$-dimensional random vector.
Then, $\hat{\z} \coloneqq \z/\norm{\z}$ is a uniform random point
in $\SS^{d-1}$.
By rotation invariance, we can fix the plane to be that spanned by the first
two vectors of the standard basis.
The projection of $\hat{\z}$ onto this plane is
$\tilde{\z} = (z_1, z_2, 0, \dotsc, 0)/\norm{\z}$.
Then, the angle between $\x$ and the plane is equal to the angle between
$\hat{\z}$ and $\wt{\z}$.
Hence, the cumulative distribution function satisfies
\begin{equation*}
F(\varphi)
=\P\biggl(\arccos \frac{\hat{\z}\cdot\wt{\z}}{\norm{\wt{\z}}} \le \varphi\biggr)
=\P\biggl(\frac{\hat{\z}\cdot\wt{\z}}{\norm{\wt{\z}}} \ge \cos\varphi\biggr)
= \P\biggl(\frac{z_1^2 + z_2^2}{\sum_{i=1}^d z_i^2} \ge \cos^2\varphi\biggr).
\end{equation*}
Since the $z_i$'s are standard normal random variables,
we have $z_1^2 + z_2^2 \sim \chi^2(2)$
and $\sum_{i=3}^d z_i^2 \sim \chi^2(d-2)$,
and these are independent.
Therefore, $(z_1^2+z_2^2)/\sum_{i=1}^d z_i^2$ has a
$\mathrm{Beta}(1,\frac{d-2}{2})$ distribution.
Hence, by the definition of the beta distribution,
\begin{equation*}
\P\biggl(\frac{z_1^2 + z_2^2}{\sum_{i=1}^d z_i^2} \ge \cos^2\varphi\biggr)
= \frac{\Gamma\bigl(\frac{d}{2}\bigr)}{\Gamma\bigl(\frac{d-2}{2}\bigr)}
  \int_{\cos^2\varphi}^1 (1-x)^{d/2-2}\,dx
= \frac{d-2}{2}\int_{\cos^2\varphi}^1(1-x)^{d/2-2}\,dx.
\end{equation*}
Taking the derivative with respect to $\varphi$,
we obtain
\begin{equation*}
g(\varphi) = -\frac{d-2}{2}(1-\cos^2\varphi)^{d/2-2}(-2\cos\varphi\sin\varphi)
= (d-2)\sin^{d-3}\varphi\cos\varphi.\qedhere
\end{equation*}
\end{proof}

\begin{proof}[Proof of Lemma~\ref{le:sqr}]
Consider the space spanned by $\x_1, \x_2, \x_3$.
Without loss of generality,
we can fix the coordinates as follows:
\begin{align*}
\x_1 &= (1, 0, 0, 0, \dotsc, 0),\\
\x_2 &= (\cos\theta, \sin\theta, 0, \dotsc, 0),\\
\x_3 &= (\cos\varphi \cos\psi, \cos\varphi \sin\psi, \sin\varphi, 0, \dotsc, 0).
\end{align*}
By symmetry on the sphere,
we can constrain the parameters in the following space:
\begin{align*}
\theta &\in [0, \pi],\\
\psi &\in [-\pi, \pi],\\
\varphi &\in [0, \pi/2].
\end{align*}

Let $f(\theta, \psi, \varphi)$ be the probability density function.
Then, by independence of the vectors,
\begin{equation*}
f(\theta, \psi, \varphi)
= \frac{1}{2\pi} h(\theta)g(\varphi)
= \frac{d-2}{2\pi\zeta}\sin^{d-2}\theta \sin^{d-3}\varphi\cos\varphi.
\end{equation*}

Denoting by $\theta(\x,\y) \in [0, \pi]$ the angle between two
$d$-dimensional vectors, we also have
\begin{align}
\cos\theta(\x_1, \x_2) &= \inner{\x_1}{\x_2} = \cos\theta,\label{eq:ang_12}\\
\cos\theta(\x_1, \x_3) &= \inner{\x_1}{\x_3} = \cos\varphi\cos\psi,
\label{eq:ang_13}\\
\cos\theta(\x_2, \x_3) &= \inner{\x_2}{\x_3} = \cos\varphi\cos\psi\cos\theta
+ \cos\varphi\sin\psi\sin\theta
= \cos\varphi \cos(\psi - \theta)\label{eq:ang_23}.
\end{align}

The event $1 \sim 2$ happens if and only if $\theta \in [0,\pi]$.
Vertex $3$ is connected to both $1$ and $2$ if and only if the projection of
$\x_3$
onto the plane determined by $\x_1$ and $\x_2$ forms an angle no greater than
$\pi/2$ with both $\x_1$ and $\x_2$.
Therefore, we have $\psi \in [\theta-\pi/2, \pi/2]$.
The last vertex is connected to all of them
if and only if the direction of $\x_4$ falls in
the spherical triangle determined by the three half planes with normal vectors
$\x_1, \x_2, \x_3$ respectively.
As a well-known fact
(see, e.g., \cite[proposition 99]{todhunter1863spherical}),
the surface area of the spherical triangle equals the spherical excess
defined by
\begin{equation*}
S
\coloneqq 2\pi - \theta(\x_1, \x_2) - \theta(\x_1, \x_3) - \theta(\x_2, \x_3)
= \biggl(\pi - \theta\biggr) + \biggl(\frac{\pi}{2} - \theta(\x_1, \x_3)\biggr)
  + \biggl(\frac{\pi}{2} - \theta(\x_2, \x_3)\biggr).
\end{equation*}

Since the surface area of the sphere is $4\pi$,
the probability that the four vertices form a clique is
\begin{equation} \label{eq:sp_int}
\begin{split}
\int_0^{\pi/2}\int_0^{\pi/2}\int_{\theta-\pi/2}^{\pi/2}
\frac{S}{4\pi}f(\theta,\psi,\varphi)\,d\psi\,d\theta\,d\varphi
&= \underbrace{\int_0^{\pi/2}\int_0^{\pi/2}\int_{\theta-\pi/2}^{\pi/2}
\frac{\pi-\theta}{4\pi}f(\theta,\psi,\varphi)\,d\psi\,d\theta\,d\varphi}_{I_1}\\
&\quad + \underbrace{\int_0^{\pi/2}\int_0^{\pi/2}\int_{\theta-\pi/2}^{\pi/2}
\frac{\pi/2-\theta(\x_1, \x_3)}{4\pi}f(\theta,\psi,\varphi)
\,d\psi\,d\theta\,d\varphi}_{I_2}\\
&\quad + \underbrace{\int_0^{\pi/2}\int_0^{\pi/2}\int_{\theta-\pi/2}^{\pi/2}
\frac{\pi/2-\theta(\x_2, \x_3)}{4\pi}f(\theta,\psi,\varphi)
\,d\psi\,d\theta\,d\varphi}_{I_3}.
\end{split}
\end{equation}
We deal with the three integrals separately as follows.

By integrating over $\psi$ and $\varphi$,
\begin{equation*}
I_1
= \frac{1}{2} \int_0^{\pi/2}\biggl(\frac{\pi - \theta}{2\pi}\biggr)^2 h(\theta)
\,d\theta
= \frac{1}{2} \int_0^{\pi/2}\biggl(\frac{\pi/2 + (\pi/2-\theta)}{2\pi}\biggr)^2
h(\theta)\,d\theta
= \frac{1}{2}\biggl(\frac{1}{32} + \frac{1}{2}\gamma + \eta\biggr).
\end{equation*}

Plugging \eqref{eq:ang_13} into $I_2$,
we have
\begin{equation} \label{eq:sp_b}
\begin{split}
I_2
&= \frac{1}{4\pi}\int_0^{\pi/2}\int_0^{\pi/2}\int_{\theta-\pi/2}^{\pi/2}
\biggl(\frac{\pi}{2}-\arccos(\cos\varphi\cos\psi)\biggr)f(\theta,\psi,\varphi)
\,d\psi\,d\theta\,d\varphi\\
&= \frac{1}{4\pi}\int_0^{\pi/2}\int_0^{\pi/2}\int_{\theta-\pi/2}^{\pi/2}
\arcsin(\cos\varphi\cos\psi)f(\theta,\psi,\varphi)
\,d\psi\,d\theta\,d\varphi.
\end{split}
\end{equation}
The Taylor expansion of $\arcsin$ gives
\begin{equation*}
\arcsin x = \sum_{n=0}^\infty a_n x^{2n+1},
\end{equation*}
where
\begin{equation*}
a_n \coloneqq \frac{(2n)!}{4^n(n!)^2(2n+1)}.
\end{equation*}
Hence, we get
\begin{equation} \label{eq:sp_taylor}
\arcsin(\cos\varphi\cos\psi)
= \sum_{n=0}^\infty a_n \cos^{2n+1}\varphi\cos^{2n+1}\psi.
\end{equation}
Inserting the expansion \eqref{eq:sp_taylor} into \eqref{eq:sp_b}
and interchanging the summation and integration,
we have
\begin{equation*}
\begin{split}
I_2
&= \frac{1}{4\pi}\int_0^{\pi/2}\int_0^{\pi/2}\int_{\theta-\pi/2}^{\pi/2}
\sum_{n=0}^\infty a_n(\cos^{2n+1}\varphi)(\cos^{2n+1}\psi)f(\theta,\psi,\varphi)
\,d\psi\,d\theta\,d\varphi\\
&= \frac{1}{8\pi^2}\sum_{n=0}^\infty a_n
\int_0^{\pi/2}g(\varphi)\cos^{2n+1}\varphi\,d\varphi
\int_0^{\pi/2}h(\theta)\int_{\theta-\pi/2}^{\pi/2}
\cos^{2n+1}\psi\,d\psi\,d\theta.
\end{split}
\end{equation*}
Since
\begin{equation} \label{eq:psi_split}
\int_{\theta-\pi/2}^{\pi/2} \cos^{2n+1} \psi\,d\psi
= \int_{0}^{\pi/2} \cos^{2n+1} \psi\,d\psi
  + \int_{\theta-\pi/2}^{0} \cos^{2n+1} \psi\,d\psi,
\end{equation}
the integral $I_2$ can also be split into two integrals accordingly:
\begin{equation*}
\begin{split}
I_2
&= \underbrace{\frac{1}{8\pi^2}\sum_{n=0}^\infty a_n
\int_0^{\pi/2}g(\varphi)\cos^{2n+1}\varphi\,d\varphi
\int_0^{\pi/2}h(\theta)\int_{0}^{\pi/2}
\cos^{2n+1}\psi\,d\psi\,d\theta}_{I_{2(a)}}\\
&\phantom{{}={}}+ \underbrace{\frac{1}{8\pi^2}\sum_{n=0}^\infty a_n
\int_0^{\pi/2}g(\varphi)\cos^{2n+1}\varphi\,d\varphi
\int_0^{\pi/2}h(\theta)\int_{\theta-\pi/2}^{0}
\cos^{2n+1}\psi\,d\psi\,d\theta}_{I_{2(b)}}.
\end{split}
\end{equation*}
We deal with $I_{2(a)}$ and $I_{2(b)}$ separately.

Using the definition of $g(\varphi)$ in Proposition~\ref{pr:ang_den_phi}
and by a change of variables $x = \sin^2\varphi$,
we have
\begin{equation*}
\begin{split}
\int_0^{\pi/2}g(\varphi)\cos^{2n+1}\varphi\,d\varphi
&= (d-2)\int_0^{\pi/2}\cos^{2n+1}\varphi\sin^{d-3}\varphi\cos\varphi\,d\varphi\\
&= \frac{d-2}{2} \int_0^1 (1 - x)^{n+1/2} x^{d/2-2} \,dx
= \frac{d-2}{2}\mathrm{B}\biggl(\frac{d}{2} - 1, n+\frac{3}{2}\biggr)\\
&= \frac{(d-2)\Gamma\bigl(\frac{d}{2} - 1\bigr)\Gamma\bigl(n+\frac{3}{2}\bigr)}
  {2\Gamma\bigl(n+\frac{d}{2}+\frac{1}{2}\bigr)}
= \frac{\Gamma\bigl(\frac{d}{2}\bigr)\Gamma\bigl(n+\frac{3}{2}\bigr)}
  {\Gamma\bigl(n+\frac{d}{2}+\frac{1}{2}\bigr)},
\end{split}
\end{equation*}
where the last equality is due to the identity $x\Gamma(x) = \Gamma(x+1)$.

By \eqref{eq:int_zeta},
\begin{equation*}
\int_{0}^{\pi/2} \cos^{2n+1} \psi\,d\psi
= \frac{\sqrt{\pi}\Gamma(n+1)}{2\Gamma\bigl(n+\frac{3}{2}\bigr)}.
\end{equation*}

Combining them and rearranging the terms, we have
\begin{equation*}
\frac{\Gamma\bigl(\frac{d}{2}\bigr)\Gamma\bigl(n+\frac{3}{2}\bigr)}
{\Gamma\bigl(n+\frac{d}{2}+\frac{1}{2}\bigr)}
\cdot \frac{\sqrt{\pi}\Gamma(n+1)}{2\Gamma\bigl(n+\frac{3}{2}\bigr)}
= \frac{\sqrt{\pi}\Gamma\bigl(\frac{d}{2}\bigr)}
  {\Gamma\bigl(\frac{d-1}{2}\bigr)}
  \cdot \frac{\Gamma\bigl(\frac{d-1}{2}\bigr)\Gamma(n+1)}
        {2\Gamma\bigl(n+\frac{d}{2}+\frac{1}{2}\bigr)}
= \frac{\pi}{\zeta} \cdot \frac{1}{2}\rB\biggl(\frac{d-1}{2},n+1\biggr).
\end{equation*}
By the definition of the beta function,
\begin{equation*}
\frac{1}{2}\mathrm{B}\biggl(\frac{d-1}{2},n+1\biggr)
= \frac{1}{2}\int_0^1 x^{d/2-3/2}(1-x)^n\,dx
= \int_0^{\pi/2} \cos^{2n+1}\theta \sin^{d-2}\theta\,d\theta.
\end{equation*}
Interchanging the summation and integration,
\begin{equation*}
\begin{split}
\sum_{n=0}^\infty a_n \int_0^{\pi/2} \cos^{2n+1}\theta \sin^{d-2}\theta\,d\theta
&= \int_0^{\pi/2}\sum_{n=0}^\infty a_n\cos^{2n+1}\theta \sin^{d-2}\theta
  \,d\theta\\
&= \int_0^{\pi/2} \arcsin(\cos\theta) \sin^{d-2}\theta\,d\theta\\
&= \int_0^{\pi/2} \biggl(\frac{\pi}{2}-\theta\biggr)\sin^{d-2}\theta\,d\theta
= 2\pi \zeta \gamma.
\end{split}
\end{equation*}

Further, by symmetry of $\sin\theta$,
\begin{equation*}
\int_0^{\pi/2}h(\theta)\,d\theta = \frac{1}{2}.
\end{equation*}

Putting them together, we obtain
\begin{equation*}
I_{2(a)}
= \frac{1}{8\pi^2} \cdot \frac{1}{2} \cdot \frac{\pi}{\zeta}
  \cdot 2\pi\zeta\gamma
= \frac{1}{8}\gamma.
\end{equation*}

Now we turn to $I_{2(b)}$ and show upper and lower bounds on it.

By the symmetry of $\cos\psi$ and a change of variables $x = \sin\varphi$,
\begin{equation*}
\int_{\theta-\pi/2}^{0} \cos^{2n+1} \psi\,d\psi
=\int_{0}^{\pi/2-\theta} \cos^{2n+1} \psi\,d\psi
= \int_{0}^{\pi/2-\theta} \cos^{2n} \psi\,d\sin\psi
= \int_0^{\cos\theta} (1-x^2)^n\,dx.
\end{equation*}

Since for $x \in [0, \cos\theta]$,
\begin{equation*}
\sin^{2n}\theta \le (1-x^2)^n \le 1,
\end{equation*}
we have
\begin{equation*}
\sin^{2n}\theta \cos\theta
\le \int_{\theta-\pi/2}^{0} \cos^{2n+1} \psi\,d\psi
\le \cos\theta.
\end{equation*}

Integrating over the density of $\theta$ gives
\begin{equation*}
\begin{split}
\int_0^{\pi/2} h(\theta) \sin^{2n}\theta \cos\theta\,d\theta
&= \frac{1}{\zeta}\int_0^{\pi/2} \sin^{2n+d-2}\theta \cos\theta\,d\theta
= \frac{1}{\zeta}\int_0^{\pi/2} \sin^{2n+d-2}\theta\,d\sin\theta\\
&= \frac{1}{(2n+d-1)\zeta}
\end{split}
\end{equation*}
and
\begin{equation*}
\int_0^{\pi/2} h(\theta)\cos\theta\,d\theta
= \frac{1}{\zeta}\int_0^{\pi/2} \sin^{d-2}\theta \cos\theta\,d\theta
= \frac{1}{\zeta}\int_0^{\pi/2} \sin^{d-2}\theta\,d\sin\theta
= \frac{1}{(d-1)\zeta}.
\end{equation*}

We deal with the upper bound first.
Interchanging the summation and integration yields
\begin{equation*}
\begin{split}
\sum_0^\infty a_n \int_0^{\pi/2} g(\varphi)\cos^{2n+1}\varphi\,d\varphi
&=\int_0^{\pi/2}\sum_0^\infty a_n g(\varphi)\cos^{2n+1}\varphi\,d\varphi
= \int_0^{\pi/2} g(\varphi) \arcsin(\cos\varphi)\,d\varphi\\
&= (d-2)\int_0^{\pi/2}\biggl(\frac{\pi}{2}-\varphi\biggr)
   \sin^{d-3}\varphi\cos\varphi\,d\varphi.
\end{split}
\end{equation*}
We can derive an upper bound on the above display by the upper bound in
\eqref{eq:theta_sin}.
Since $\varphi \in [0, \pi/2]$,
\begin{equation*}
\frac{\pi}{2}-\varphi
\le \frac{\pi}{2}\cos\varphi.
\end{equation*}
And by a change of variables $x = \sin^2 \varphi$ and the definition of beta
function,
\begin{equation*}
\begin{split}
\int_0^{\pi/2}\sin^{d-3}\varphi\cos^2\varphi\,d\varphi
&= \frac{1}{2}\int_0^{\pi/2}\sin^{d-4}\varphi\cos\varphi\,d\sin^2\varphi
= \frac{1}{2}\int_0^1 (1-x)^{1/2}x^{d/2-2}\,dx\\
&= \frac{1}{2}\rB\biggl(\frac{d}{2}-1,\frac{3}{2}\biggr)
= \frac{\Gamma\bigl(\frac{d}{2}-1\bigr)\Gamma\bigl(\frac{3}{2}\bigr)}
  {2\Gamma\bigl(\frac{d+1}{2}\bigr)}.
\end{split}
\end{equation*}

Therefore, $I_{2(b)}$ can be upper bounded by
\begin{equation*}
I_{2(b)}
\le \frac{1}{8\pi^2} \cdot
\frac{\Gamma\bigl(\frac{d}{2}\bigr)}
{(d-1)\sqrt{\pi}\Gamma\bigl(\frac{d-1}{2}\bigr)}
\cdot
\frac{\pi (d-2)\Gamma\bigl(\frac{d}{2}-1\bigr)\Gamma\bigl(\frac{3}{2}\bigr)}
{4\Gamma\bigl(\frac{d+1}{2}\bigr)}
= \frac{1}{64\pi} \biggl(\frac{\Gamma\bigl(\frac{d}{2}\bigr)}
                          {\Gamma\bigl(\frac{d+1}{2}\bigr)}\biggr)^2
\le \frac{1}{16\pi}\cdot \frac{1}{d},
\end{equation*}
where the last inequality is by \eqref{eq:wendel}.

For the lower bound on $I_{2(b)}$, we have
\begin{equation*}
\begin{split}
\int_0^{\pi/2} \frac{1}{2n+d-1}g(\varphi)\cos^{2n+1}\varphi\,d\varphi
&= \frac{d-2}{2n+d-1}\int_0^{\pi/2} \cos^{2n+1}\varphi
  \sin^{d-3}\varphi\cos\varphi\,d\varphi\\
&= \frac{d-2}{2n+d-1}
   \cdot\frac{1}{2}\rB\biggl(\frac{d}{2}-1, n+\frac{3}{2}\biggr)\\
&= \frac{d-2}{2n+d-1}\cdot
   \frac{\Gamma\bigl(\frac{d}{2}-1\bigr)\Gamma\bigl(n+\frac{3}{2}\bigr)}
   {2\Gamma\bigl(n+\frac{d}{2}+\frac{1}{2}\bigr)}\\
&\ge \frac{d-2}{2n+d+1}\cdot
     \frac{\Gamma\bigl(\frac{d}{2}-1\bigr)\Gamma\bigl(n+\frac{3}{2}\bigr)}
     {2\Gamma\bigl(n+\frac{d}{2}+\frac{1}{2}\bigr)}\\
&= \frac{\Gamma\bigl(\frac{d}{2}\bigr)\Gamma\bigl(n+\frac{3}{2}\bigr)}
   {2\Gamma\bigl(n+\frac{d}{2}+\frac{3}{2}\bigr)}
= \frac{1}{2}\rB\biggl(\frac{d}{2},n+\frac{3}{2}\biggr).
\end{split}
\end{equation*}
Hence, by the definition of beta function and a change of variables,
\begin{equation*}
\begin{split}
I_{2(b)}
&\ge \frac{1}{16\pi^2\zeta}\sum_{n=0}^\infty a_n
    \rB\biggl(\frac{d}{2},n+\frac{3}{2}\biggr)
= \frac{1}{8\pi^2\zeta}\sum_{n=0}^\infty a_n
  \int_0^{\pi/2}\cos^{2n+1}\psi\sin^{d-1}\psi\cos\psi\,d\psi\\
&= \frac{1}{8\pi^2\zeta}\int_0^{\pi/2}\sum_{n=0}^\infty a_n
   \cos^{2n+1}\psi\sin^{d-1}\psi\cos\psi\,d\psi\\
&= \frac{1}{8\pi^2\zeta}\int_0^{\pi/2}\biggl(\frac{\pi}{2}-\psi\biggr)
   \sin^{d-1}\psi\cos\psi\,d\psi.
\end{split}
\end{equation*}
Further, by the lower bound in \eqref{eq:theta_sin}, we obtain
\begin{equation*}
I_{2(b)}
\ge \frac{1}{8\pi^2\zeta}\int_0^{\pi/2}\sin^{d-1}\psi\cos^2\psi\,d\psi
= \frac{1}{16\pi^2\zeta}\rB\biggl(\frac{d}{2},\frac{3}{2}\biggr).
\end{equation*}
Inserting the definitions of $\zeta$ and the beta function gives
\begin{equation*}
I_{2(b)}
\ge \frac{1}{16\pi^2}
\cdot\frac{\Gamma\bigl(\frac{d}{2}\bigr)}
    {\sqrt{\pi}\Gamma\bigl(\frac{d-1}{2}\bigr)}
\cdot\frac{\Gamma(\frac{d}{2})\Gamma\bigl(\frac{3}{2}\bigr)}
    {\Gamma\bigl(\frac{d+3}{2}\bigr)}
= \frac{1}{32\pi^2}\cdot\frac{d-1}{d+1}
  \biggl(\frac{\Gamma\bigl(\frac{d}{2}\bigr)}
         {\Gamma\bigl(\frac{d+1}{2}\bigr)}\biggr)^2
\ge \frac{1}{64\pi^2}\biggl(\frac{\Gamma\bigl(\frac{d}{2}\bigr)}
         {\Gamma\bigl(\frac{d+1}{2}\bigr)}\biggr)^2.
\end{equation*}
Using \eqref{eq:wendel}, we have
\begin{equation*}
I_{2(b)} \ge \frac{1}{32\pi^2}\cdot\frac{1}{d}.
\end{equation*}

For the integral $I_3$ in \eqref{eq:sp_int},
by a change of variables $\xi = \theta-\psi$,
\begin{equation*}
\begin{split}
I_3 &= \int_0^{\pi/2}\int_0^{\pi/2}\int_{\theta-\pi/2}^{\pi/2}
\arcsin(\cos\varphi\cos(\theta-\psi))h(\theta,\psi,\varphi)
\,d\psi\,d\theta\,d\varphi\\
&=\int_0^{\pi/2}\int_0^{\pi/2}\int_{\pi/2}^{\theta-\pi/2}
-\arcsin(\cos\varphi\cos\xi)h(\theta,\psi,\varphi)
\,d\xi\,d\theta\,d\varphi\\
&=\int_0^{\pi/2}\int_0^{\pi/2}\int_{\theta-\pi/2}^{\pi/2}
\arcsin(\cos\varphi\cos\psi)h(\theta,\psi,\varphi)
\,d\psi\,d\theta\,d\varphi = I_2.
\end{split}
\end{equation*}

Combining the estimates of $I_1, I_2, I_3$ given above proves the claim.
\end{proof}
Plugging Lemma~\ref{le:Q1} into \eqref{eq:tau},
we arrive at the claims in Lemma~\ref{le:E_tau4_gn1/2}.

Using Lemma~\ref{le:gnpdq_gnpd} and Lemma~\ref{le:E_tau4_gn1/2}, we have
\begin{equation*}
\abs{\E_{\cG(n,1/2,d,q)}[\tau_{[4]}]} = q^6\abs{\E_{\cG(n,1/2,d,q)}[\tau_{[4]}]}
\le \frac{Cq^6}{d}.
\end{equation*}
Then,
\begin{equation*}
\abs{\E[\tau_4(\cG(n,1/2,d,q))]}
\le \binom{n}{4}\abs{\E_{\cG(n,1/2,d,q)}[\tau_{[4]}]}
\le \frac{Cn^4q^6}{d}.
\end{equation*}
Theorem~\ref{th:E_quad} is hence proved.

\subsubsection{General signed cliques}
We next turn to general signed cliques in $\cG(n,p,d,q)$.
Similarly, we start with estimations in $\cG(n,p,d)$.
By the definition of the signed clique,
\begin{equation*}
\begin{split}
\abs{\E_{\cG(n,p,d)}[\tau_{[k]}]}
&= \biggl\lvert\E\biggl[\prod_{\{i,j\} \subset [k]} (a_{i,j} - p)\biggr]
\biggr\rvert
= \biggl\lvert\E\biggl[\prod_{\{i,j\} \subset [k]} (a_{i,j} - p)\biggr]
  - \prod_{\{i,j\} \subset [k]} (p - p)\biggr\rvert\\
&= \biggl\lvert\sum_{S \in 2^{\binom{[k]}{2}}} (\P(S) - p^{\abs{S}})
(-p)^{\binom{k}{2} - \abs{S}}\biggr\rvert
\le \sum_{S \in 2^{\binom{[k]}{2}}} \lvert\P(S) - p^{\abs{S}}\rvert
p^{\binom{k}{2} - \abs{S}}.
\end{split}
\end{equation*}

The following corollary, derived from a result in Section~\ref{se:impo},
facilitates our calculations.
\begin{corollary} \label{co:edge_prob}
Let $V = [k]$ be a set of vertices and $E \subset V \times V$ be a set of edges.
Denote by $\lvert E\rvert$ the cardinality of $E$.
Then, we have that for a constant $C_{k,p}$,
\begin{equation*}
\lvert \P_{\cG(k,p,d)}(E) - p^{\lvert E \rvert}\rvert
\le \frac{C_{k,p}}{\sqrt{d}}.
\end{equation*}
\end{corollary}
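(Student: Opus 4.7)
The plan is to derive this corollary directly from the total variation bound established in Section~\ref{se:impo}, specialized to $q=1$ (so that $\cG(k,p,d,1) = \cG(k,p,d)$) and $n=k$. Since $k$ is a fixed constant, the $k$-dependent quantities collapse into the constant $C_{k,p}$.

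First, I would observe that the event $\{E \subseteq E(G)\}$ is a measurable function of the graph $G$. Consequently, by the definition of the total variation distance and the data processing inequality (Corollary~\ref{co:dp}),
\[
\lvert \P_{\cG(k,p,d)}(E) - \P_{\cG(k,p)}(E)\rvert
\le \tv{\cG(k,p)}{\cG(k,p,d)}.
\]
Under the \ER{} graph $\cG(k,p)$, independence of edges gives $\P_{\cG(k,p)}(E) = p^{\lvert E\rvert}$, so the left-hand side is exactly the quantity we want to bound.

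Next, I would apply the bound \eqref{eq:tv_upper} with $n = k$ and $q = 1$. Provided $d \ge 2k$, this yields
\[
\tv{\cG(k,p)}{\cG(k,p,d)}
\le C_p\sqrt{\frac{k^2}{d^2}} + C\biggl(\sqrt{\frac{k^2}{d}} + \sqrt{\frac{k^3}{d}}\biggr).
\]
Since $k$ is fixed and the dominating term is $\sqrt{k^3/d}$, this is $O_{k,p}(1/\sqrt{d})$, so there exists a constant $C_{k,p}$ (depending only on $k$ and $p$) with $\tv{\cG(k,p)}{\cG(k,p,d)} \le C_{k,p}/\sqrt{d}$.

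Finally, for the small-dimension regime $d < 2k$, the quantity $1/\sqrt{d}$ is bounded below by a positive constant depending only on $k$, and so the trivial estimate $\lvert \P_{\cG(k,p,d)}(E) - p^{\lvert E\rvert}\rvert \le 1$ can be absorbed into $C_{k,p}$ by enlarging it if necessary. The whole proof is essentially a one-step invocation of the Section~\ref{se:impo} machinery; there is no serious obstacle, as the heavy lifting (the chain-rule argument, the log-determinant bound of Lemma~\ref{le:logdet}, and the estimate of Lemma~\ref{le:delta_pd}) has already been done upstream.
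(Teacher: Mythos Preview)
Your proposal is correct and follows essentially the same route as the paper's proof: specialize~\eqref{eq:tv_upper} to $q=1$ and $n=k$, then bound $|\P_{\cG(k,p,d)}(E)-p^{|E|}|$ by $\tv{\cG(k,p)}{\cG(k,p,d)}$ via the definition of total variation. Your extra care about the regime $d<2k$ is a nice touch that the paper leaves implicit, and note that you don't actually need the data processing inequality here---the bound follows directly from the definition of total variation distance applied to the event $\{E\subseteq E(G)\}$.
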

\begin{proof}
In \eqref{eq:tv_upper}, by setting $q = 1$, we have that
\begin{equation}
\tv{\cG(n, p)}{\cG(n, p, d)} \le C_p\frac{n}{d}
+ C\biggl(\sqrt{\frac{n^2}{d}} + \sqrt{\frac{n^3}{d}}\biggr).
\end{equation}
By the definition of the distance,
\begin{equation*}
\begin{split}
\abs{\P_{\cG(k,p,d)}(E) - p^{\abs{E}}}
= \bigl\lvert \P_{\cG(k,p,d)}(E) - \P_{\cG(k,p)}(E) \bigr\rvert
\le \tv{\cG(k, p)}{\cG(k, p, d)}
\le C_p \sqrt{\frac{k^3}{d}}.
\end{split}
\end{equation*}
The claim directly follows.
\end{proof}

By Corollary~\ref{co:edge_prob},
\begin{equation} \label{eq:E_tauk_gnpd}
\abs{\E_{\cG(n,p,d)}[\tau_{[k]}]} \le \frac{C_{k,p}}{\sqrt{d}}.
\end{equation}
Therefore, for a constant $C_{k,p}$,
\begin{equation} \label{eq:E_tauk_gnpdq}
\abs{\E[\tau_k(\cG(n,p,d,q))]}
= q^{\binom{k}{2}} \abs{\E[\tau_k(\cG(n,p,d))]}
\le q^{\binom{k}{2}} \binom{n}{k} \abs{\E[\tau_{[k]}]}
\le \frac{C_{k,p}n^kq^{k(k-1)/2}}{\sqrt{d}}.
\end{equation}

\begin{lemma} \label{le:min_var_clique}
There exists a constant $C_{k,p} > 0$, depending only on $p$ and $k$, such that
\begin{equation*}
\Var[\tau_k(\cG(n,p,d,q))]
\ge C_{k,p} n^k.
\end{equation*}
\end{lemma}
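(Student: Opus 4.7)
The plan is to expand
\[
\Var[\tau_k(\cG(n,p,d,q))]
 = \sum_{S, S' \in \binom{[n]}{k}} \operatorname{Cov}(\tau_S, \tau_{S'}),
\]
show that every summand is nonnegative, and lower bound the diagonal
$\Var[\tau_{[k]}]$ by a positive constant depending only on $k$ and $p$.
Combined, these two facts give
$\Var[\tau_k] \ge \binom{n}{k} \Var[\tau_{[k]}] \ge C_{k,p} n^k$.

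For $|S \cap S'| \le 1$ the covariance vanishes: disjoint pairs are
independent, and with one shared vertex $v$, conditioning on $\x_v$ and
using rotation invariance together with the conditional independence of the
Bernoulli edge noise (since $\tau_S$ and $\tau_{S'}$ share no edges) gives
$\E[\tau_S \tau_{S'} \mid \x_v] = \E[\tau_S] \E[\tau_{S'}]$. The substantive
case is $|S \cap S'| = j \ge 2$: write $J = S \cap S'$, $A = S \setminus J$,
$B = S' \setminus J$, so $E_S = \binom{J}{2} \sqcup E^A$ and
$E_{S'} = \binom{J}{2} \sqcup E^B$, where $E^A$ (respectively $E^B$)
collects those edges of $E_S$ (respectively $E_{S'}$) that touch $A$
(respectively $B$). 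Apply
\[
\operatorname{Cov}(\tau_S, \tau_{S'})
 = \E\bigl[\operatorname{Cov}(\tau_S, \tau_{S'} \mid \X)\bigr]
   + \operatorname{Cov}\bigl(\E[\tau_S \mid \X], \E[\tau_{S'} \mid \X]\bigr)
\]
and show that both summands are nonnegative. Conditional on $\X$ the edges
are independent, so the first summand factors as
$q^{|E^A|+|E^B|} \cdot \E[R(\x_J) \prod_{e \in E^A \sqcup E^B} \brs_e]$,
with
$R(\x_J) := \prod_{e \in \binom{J}{2}}(k_e(1-k_e) + q^2 \brs_e^2)
 - \prod_{e \in \binom{J}{2}} q^2 \brs_e^2 \ge 0$
by a term-by-term expansion into products of nonnegative quantities.
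Conditioning further on $\x_J$ and using $\x_A \perp \x_B \mid \x_J$
together with the symmetry $|A| = |B| = k-j$ gives
$\E[\prod_{e \in E^A \sqcup E^B} \brs_e \mid \x_J] = \phi(\x_J)^2$,
a perfect square with $\phi(\x_J) := \E[\prod_{e \in E^A} \brs_e \mid \x_J]$,
so the first summand is nonnegative. For the second summand, write
$\prod_{e \in E_S} \brs_e = U(\x_J) V_A$ and
$\prod_{e \in E_{S'}} \brs_e = U(\x_J) V_B$ with
$U(\x_J) := \prod_{e \in \binom{J}{2}} \brs_e$,
$V_A := \prod_{e \in E^A} \brs_e$, $V_B := \prod_{e \in E^B} \brs_e$;
since $V_A \perp V_B \mid \x_J$, the conditional covariance
$\operatorname{Cov}(\prod_{E_S} \brs_e, \prod_{E_{S'}} \brs_e \mid \x_J)$
vanishes, and the second summand reduces to
$q^{k(k-1)} \Var[U(\x_J) \phi(\x_J)] \ge 0$.

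For the diagonal, $(a_e - p)^2 \ge \min(p,1-p)^2$ almost surely, so
$\E[\tau_{[k]}^2] \ge \min(p, 1-p)^{k(k-1)}$ deterministically. By
Lemma~\ref{le:gnpdq_gnpd} and Corollary~\ref{co:edge_prob},
$|\E[\tau_{[k]}]| = q^{\binom{k}{2}} |\E_{\cG(n,p,d)}[\tau_{[k]}]|
\le C_{k,p}/\sqrt{d}$, so $\E[\tau_{[k]}]^2 \le C_{k,p}^2/d$, and for $d$
sufficiently large in terms of $k$ and $p$,
$\Var[\tau_{[k]}] \ge \tfrac{1}{2} \min(p, 1-p)^{k(k-1)}$, which
completes the proof. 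The main obstacle is the nonnegativity step for
$|S \cap S'| \ge 2$: the key is the symmetric factorization of the
non-shared edges into $E^A$ and $E^B$ pieces upon conditioning on $\x_J$,
which converts cross-expectations of signed edge products into perfect
squares and thereby eliminates every potentially negative contribution
to the variance.
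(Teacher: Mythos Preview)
Your proof is correct and follows the same high-level plan as the paper---show all cross-covariances are nonnegative and lower bound the diagonal variance---but the paper handles the off-diagonal term with a much shorter argument. For $S \ne S'$ with overlap $V' = S \cap S'$, the paper simply conditions on \emph{all} randomness supported on $V'$, i.e.\ both the latent positions $(\x_v)_{v \in V'}$ and the edge indicators $(a_e)_{e \in \binom{V'}{2}}$. Under this conditioning $\tau_S$ and $\tau_{S'}$ share no remaining randomness, hence are conditionally independent; by the symmetry $|S \setminus V'| = |S' \setminus V'|$ they also have the same conditional expectation. Thus $\E[\tau_S \tau_{S'}] = \E\bigl[(\E[\tau_S \mid V'])^2\bigr] \ge (\E[\tau_S])^2$ by a single application of Jensen. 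Your route---conditioning only on $\X$, invoking the law of total covariance, and then separately verifying positivity of $\E[\operatorname{Cov}(\tau_S,\tau_{S'}\mid\X)]$ via the expansion of $R(\x_J)$ and of $\operatorname{Cov}(\E[\tau_S\mid\X],\E[\tau_{S'}\mid\X])$ via a second conditioning on $\x_J$---arrives at the same inequality but with more machinery. The paper's choice to fold the shared edge indicators into the conditioning sigma-algebra is what collapses your two-term decomposition into one line. Your treatment of the diagonal term is essentially identical to the paper's (the paper uses the slightly weaker bound $(a_e-p)^2 \ge p^2(1-p)^2$ rather than $\min(p,1-p)^2$, but this is immaterial).
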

\begin{proof}
Consider two sets of vertices $S$ and $S'$ of size $k$.
Since $(a_{i,j} - p)^2$ equals $(1-p)^2$ or $p^2$,
we have that
$(a_{i,j} - p)^2 \ge p^2(1-p)^2$.
Now if $S = S'$, then
\begin{equation*}
\E[\tau_{S}\tau_{S'}] = \E[(\tau_{S})^2]
= \E\biggl[\prod_{\{i,j\} \in V_1} (a_{i,j} - p)^2\biggr]
\ge (p^2(1-p)^2)^{\binom{k}{2}} = (p(1-p))^{k(k-1)}.
\end{equation*}
By \eqref{eq:E_tauk_gnpd}, there exists a $C'_{k,p} > 0$,
such that for $d \ge C'_{k,p}$,
\begin{equation*}
\E[\tau_S]^2 \le \frac{1}{2}\E[(\tau_S)^2].
\end{equation*}

For $S \ne S'$, let $V' = S \cap S'$ be the set of overlapping vertices.
Then, we have that
\begin{equation*}
\E[\tau_{S}\tau_{S'}]
= \E[\E[\tau_{S}\tau_{S'} \mid V']]
= \E[\E[\tau_{S} \mid V']\E[\tau_{S'} \mid V']]
= \E[\E[\tau_{S} \mid V']^2]
\ge \E[\tau_{S}]^2,
\end{equation*}
where the inequality is by Jensen's.

Therefore, there exists a $C_{k,p} > 0$, such that when $d \ge C_{k,p}$,
\begin{equation*}
\Var[\tau_k(\cG(n,p,d,q))]
= \E\biggl[\biggl(\sum_{S \in \binom{n}{k}} \tau_S\biggr)^2\biggr]
  - \biggl(\sum_{S \in \binom{n}{k}}\E[\tau_S]\biggr)^2
\ge \binom{n}{k} \Var[\tau_{[k]}]
\ge C_{k,p} n^k.\qedhere
\end{equation*}
\end{proof}

Putting them together, we have that for some constant $C_{k,p} < \infty$,
\begin{equation*}
\frac{(\E[\tau_k(\cG(n,p,d,q))]
- \E[\tau_k(\cG(n,p))])^2}
{\min \{\Var[\tau_k(\cG(n,p))],
\Var[\tau_k(\cG(n,p,d,q))]\}}
\le \frac{C_{k,p}n^k q^{k(k-1)}}{d}.
\end{equation*}

The above display implies that the method used to derive the possibility of
detection does not work when $n^kq^{k(k-1)}/d \to 0$,
which suggests a certain boundary of detection using general signed clique
statistics.
Note that for $k \geq 4$ this does not rule out the whole region
where signed triangles are not able to distinguish.
However, based on the computation of the expected signed quadruple count,
we see that the upper bound on the expectation in \eqref{eq:E_tauk_gnpdq}
is not precise;
in particular, the dependence on $d$ can be improved.
In general, we do not expect the detection boundary to be improved by
signed cliques.
Towards this, we present the following conjecture.

\begin{conjecture} \label{cj:E_tauk_gnpd}
There exists a constant $C_{k,p}$ such that
\begin{equation*}
\abs{\E_{\cG(n,p,d)}[\tau_{[k]}]} \le \frac{C_{k,p}}{d^{k/6}}.
\end{equation*}
\end{conjecture}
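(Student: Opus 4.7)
The plan is to analyze
\[
\E_{\cG(n,p,d)}[\tau_{[k]}] = \E_X\Bigl[\prod_{1\le i<j\le k} f(x_i,x_j)\Bigr],\qquad f(x,y) := s_{t_{p,d}}(\langle x,y\rangle)-p,
\]
by expanding the centered edge function $f$ in the Gegenbauer/spherical-harmonic basis on $\SS^{d-1}$:
\[
f(x,y) = \sum_{\ell\ge 1} \hat{f}_\ell^{(d)}\, G_\ell^{(d)}(\langle x,y\rangle),
\]
with no $\ell=0$ term since $\E[f]=0$. The first task is to bound the coefficients in the regime $t_{p,d}\sqrt{d}\to t_p$ (using Lemma~\ref{le:delta_pd}). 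Because the Gegenbauer polynomial $G_\ell^{(d)}(\langle x,y\rangle)$ converges after suitable rescaling to the Hermite polynomial $H_\ell(\sqrt d\,\langle x,y\rangle)$, and the step function's Hermite coefficients are bounded by $C_{\ell,p}$, one expects $|\hat{f}_\ell^{(d)}|^2\, N_\ell^{(d)} \le C_{\ell,p}$, where $N_\ell^{(d)}$ is the dimension of the degree-$\ell$ spherical harmonics.

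Next, apply the addition formula
\[
G_\ell^{(d)}(\langle x,y\rangle)=\frac{1}{N_\ell^{(d)}}\sum_{m=1}^{N_\ell^{(d)}}Y_{\ell,m}(x)\,Y_{\ell,m}(y)
\]
with $\{Y_{\ell,m}\}$ an orthonormal basis of spherical harmonics, substitute into the $\binom{k}{2}$-fold product, and integrate vertex by vertex. The integral at each vertex $i\in[k]$ becomes $\int\prod_{j\ne i} Y_{\ell_{ij},m_{ij}}(x_i)\,d\sigma(x_i)$, a Clebsch--Gordan-type integral that vanishes unless a triangle-type admissibility condition among the $k-1$ incident labels holds. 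This reduces the claim to bounding the combinatorial sum
\[
\E_{\cG(n,p,d)}[\tau_{[k]}] = \sum_{(\ell_e)\text{ admissible}} \Bigl(\prod_{e\in\binom{[k]}{2}} \hat{f}_{\ell_e}^{(d)}\Bigr)\, \Psi_k\bigl((\ell_e)\bigr),
\]
where $\Psi_k$ collects the multiple-spherical-harmonic integrals at each vertex together with the $N_\ell^{(d)}$-normalizations along each edge.

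The key step I would then attack is showing that every admissible labelling has total degree $\sum_e \ell_e$ at least of order $k$. Heuristically, the only way for a triple product $\int Y_{\ell_1}Y_{\ell_2}Y_{\ell_3}\,d\sigma$ to be nonzero is for $(\ell_1,\ell_2,\ell_3)$ to satisfy triangle inequalities, and with $k-1$ labels per vertex, these constraints couple into a global system on $K_k$ whose minimum admissible total weight should scale like $k/3$ (saturated when edges can be grouped into label-balanced triangles). Combined with $\prod_e |\hat{f}_{\ell_e}^{(d)}|\cdot \Psi_k \le C_{k,p}\, d^{-(\sum_e\ell_e)/2}$, this would produce the conjectured $d^{-k/6}$ decay.

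The principal obstacle is carrying out this combinatorial optimization rigorously and pairing it with sharp $d$-dependent bounds on the Clebsch--Gordan integrals at each vertex, which for $k\ge 5$ admit many mixing patterns that must be controlled simultaneously. A conceptually cleaner alternative is a coupling approach: replace the Wishart-like block $(\sqrt d\,\langle x_i,x_j\rangle)_{i<j\le k}$ by a GOE block, where $\E[\tau_{[k]}]=0$ by independence of off-diagonal Gaussian entries, and quantify the Wishart-to-GOE discrepancy via an Edgeworth-type expansion; matching cumulants up to order $\lfloor k/3\rfloor$ between the Wishart and GOE ensembles would similarly yield the $d^{-k/6}$ rate. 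Either approach is technically demanding, which is why the statement is left as a conjecture.
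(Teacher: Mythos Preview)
The statement you are addressing is a \emph{conjecture} in the paper, and the paper does not provide a proof. What the paper offers instead is a brief heuristic consistency argument: since the phase transition for $\cG(n,p,d)$ occurs at $d\asymp n^3$, and since the dominant term in the signed-clique test is $n^k/\E[\tau_{[k]}]^2$, the exponent of $d$ in $|\E[\tau_{[k]}]|$ must be at least $k/6$ for this to be compatible with the known threshold. The paper also remarks that this exponent is not sharp (for $k=4$ and $p=1/2$ the true decay is $1/d$, not $1/d^{2/3}$).

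Your proposal is therefore not a comparison against an existing proof but a sketch of possible attacks on an open problem. The two routes you describe---Gegenbauer expansion with vertex-by-vertex Clebsch--Gordan integration, and a Wishart-to-GOE Edgeworth coupling---are both reasonable and genuinely different from the paper's indirect heuristic. However, the sketch has a real gap at the step where you assert $\prod_e|\hat f_{\ell_e}^{(d)}|\cdot\Psi_k\le C_{k,p}\,d^{-(\sum_e\ell_e)/2}$ and then optimize $\sum_e\ell_e$. Since every $\ell_e\ge 1$ and there are $\binom{k}{2}$ edges, the naive lower bound on $\sum_e\ell_e$ is already $\binom{k}{2}\gg k/3$; the point is precisely that the vertex integrals $\Psi_k$ can be \emph{large} (they involve sums over $N_\ell^{(d)}\asymp d^\ell$ basis elements), and the delicate cancellation between coefficient decay and multiplicity growth is what must be controlled. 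Your outline does not explain how to track this balance, and ``triangle-type admissibility'' at each vertex with $k-1$ incident edges is substantially more intricate than the three-edge case. You correctly flag this as the principal obstacle, and it remains one; the conjecture is open.
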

We briefly argue why this bound should hold.
In the proof of Theorem~\ref{th:det_upper},
we see that the dominating term comes from the ratio between the variance of
the signed triangle count in $\cG(n, p)$ and the squared expectation of the
signed triangle count in $\cG(n,p,d,q)$.
Suppose this still holds for general signed cliques.
Then, the dominating term if we use a signed clique statistic becomes
$C_{k,p}n^k/d^{2\alpha}$,
where $C_{k,p}/d^{\alpha}$ is a lower bound for
$\abs{\E_{\cG(n,p,d)}[\tau_{[k]}]}$.
Since $n^3/d$ is the precise order for the phase transition in $\cG(n,p,d)$,
we must have $k/(2\alpha) \le 3$,
which gives $\alpha \ge k/6$.
Note that this argument does not give a tight bound on the power of $d$,
as witnessed by the case of $k=4$,
when we know from Lemma~\ref{le:E_tau4_gn1/2} that
$\abs{\E_{\cG(n,1/2,d)}[\tau_{[4]}]}$ decays as~$C/d$.

In any case,
assuming Conjecture~\ref{cj:E_tauk_gnpd} holds,
and by the same arguments presented in this subsection,
we obtain that detection is not possible with this method if
$n^{3} q^{3(k-1)}/d\to 0$,
and this bound gets worse as $k$ grows.

\subsection{Signed cycles}
Let $S$ be a subset of $V$ and denote $k \coloneqq \abs{S}$.
Consider a cycle $C \subset \binom{S}{2}$, which is a set of edges forming a
closed chain.
There are $(k-1)!/2$ possible Hamilton cycles
(each vertex is visited exactly once) on $S$;
they are distributed identically to
$C^0 \coloneqq \{\{1,2\},\{2,3\},\ldots,\{k-1,k\},\{k,1\}\}$.
Denote by $K_C$ the indicator that the pairs in $C$ form a cycle of $G$.
Given the adjacency matrix $\A$ of $G$,
$K_C$ can be expressed by
\begin{equation}
K_C = \prod_{e \in C} a_e.
\end{equation}
Then, the total number of length $k$ cycles in $G$,
denoted by $K_k(G)$,
can be written as
\begin{equation}
K_k(G) = \sum_{C \subset \binom{S}{2}, S \in \binom{V}{k}} K_C.
\end{equation}
We similarly define the signed cycle and its count by
\begin{equation}
\kappa_C = \prod_{e \in C} (a_e - p)
\quad \text{and} \quad
\kappa_k(G) = \sum_{C \subset \binom{S}{2}, S \in \binom{V}{k}} \kappa_C.
\end{equation}

We again start with estimating the expectation and variance of
the signed cycle statistic in $\cG(n,p)$.

For $\cG(n,p)$, again by independence of edges,
a signed cycle has expectation zero:
\begin{equation*}
\E[\kappa_{C^0}] = \prod_{e \in C^0}\E[a_e - p] = 0.
\end{equation*}
Hence, the expectation of the signed length $k$ cycle statistic in $\cG(n,p)$
is also zero:
\begin{equation} \label{eq:E_kappak_gnp}
\E[\kappa_k(\cG(n,p))] = \binom{n}{k}\frac{(k-1)!}{2}\E[\kappa_{C^0}] = 0.
\end{equation}
Consider two cycles $C$ and $C'$ of length $k$.
If $C = C'$, then
\begin{equation*}
\E[\kappa_{C}\kappa_{C'}]
= \E[(\kappa_{C})^2]
= \E\biggl[\prod_{e \in C} (a_e - p)^2\biggr]
= \prod_{e \in C} \E[(a_e - p)^2]
= (p(1-p))^k.
\end{equation*}
For $C \ne C'$, there exists at least one edge $e$ that is in $C$
but not in $C'$.
Hence,
\begin{equation*}
\E[\kappa_{C}\kappa_{C}]
= \E[a_e - p]
  \E\biggl[\kappa_{C'}\prod_{e'\in C\backslash\{e\}}(a_{e'}-p)\biggr]
= 0.
\end{equation*}
Therefore, for some $C_{k,p} > 0$,
\begin{equation} \label{eq:V_kappak_gnp}
\begin{split}
\Var[\kappa_k(\cG(n,p))]
&= \E\biggl[\biggl(\sum_{C \subset \binom{S}{2}, S \in \binom{V}{k}}
\kappa_C\biggr)^2\biggr]
= \frac{n!}{(n-k)!2k} \E[(\kappa_{C^0})^2]
= \frac{n!}{(n-k)!2k} (p(1-p))^k\\
&\ge C_{k,p} n^k.
\end{split}
\end{equation}

In order to estimate the mean and variance of the signed cycle statistic
in $\cG(n,p,d,q)$,
we additionally need the following lemma concerning the probability of
an open path (i.e., an open chain of edges) in $\cG(n,p,d)$.
\begin{lemma} \label{le:path_prob}
In $\cG(n,p,d)$, any open path of length $k$ has probability $p^k$.
\end{lemma}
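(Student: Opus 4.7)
The plan is to prove the lemma by induction on the path length $k$, peeling off one endpoint vertex at a time and exploiting rotational invariance together with the independence of the latent vectors.

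For the base case $k=1$ the claim is just the defining property of $t_{p,d}$, namely $\P(\inner{\x_i}{\x_j} \ge t_{p,d}) = p$. For the inductive step, relabel the vertices along the path as $1, 2, \ldots, k+1$, so we need to show
\begin{equation*}
\P\left(\inner{\x_1}{\x_2} \ge t_{p,d},\ \inner{\x_2}{\x_3} \ge t_{p,d},\ \ldots,\ \inner{\x_k}{\x_{k+1}} \ge t_{p,d}\right) = p^k.
\end{equation*}
I would condition on $\x_2, \x_3, \ldots, \x_{k+1}$. All of the edge indicators $\bb1\{\inner{\x_i}{\x_{i+1}} \ge t_{p,d}\}$ for $i = 2, \ldots, k$ become measurable with respect to this conditioning, while $\x_1$ remains uniformly distributed on $\SS^{d-1}$ and is independent of everything else. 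By rotational invariance of the uniform distribution on the sphere, the conditional probability of the first edge depends only on the marginal of $\inner{\x_1}{\x_2}$ given $\x_2$, which (by rotating $\x_2$ to $\e_1$) is just $\P(\inner{\x_1}{\e_1} \ge t_{p,d}) = p$.

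Pulling the factor $p$ out of the expectation then yields
\begin{equation*}
\P(\text{path } 1\text{-}2\text{-}\cdots\text{-}(k+1)) = p \cdot \P(\text{path } 2\text{-}3\text{-}\cdots\text{-}(k+1)),
\end{equation*}
and the second factor is the probability of an open path of length $k-1$, which equals $p^{k-1}$ by the inductive hypothesis. This gives $p^k$ and closes the induction.

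There is no real obstacle here; the only thing to verify carefully is that the edges along the path really are a chain with distinct vertices, so that the endpoint vertex $1$ appears in exactly one edge and its latent vector is independent of all other latent vectors involved in the remaining edges. This is exactly what distinguishes an open path from a cycle (which would create an additional inner-product constraint tying the two ends together) and is what makes the induction go through cleanly.
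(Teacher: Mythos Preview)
Your proposal is correct and follows essentially the same approach as the paper's proof: induction on the path length, peeling off an endpoint edge, and using rotational invariance of the uniform distribution on the sphere to extract a factor of $p$. The only cosmetic differences are that the paper peels off the last edge (conditioning on $\x_k$ alone and using conditional independence of the two sides given $\x_k$) whereas you peel off the first edge by conditioning on $\x_2,\ldots,\x_{k+1}$; both variants yield the same recursion $\P(P_{k+1}) = p \cdot \P(P_k)$.
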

\begin{proof}
We prove the claim by induction on the length.
For each edge in $\cG(n,p,d)$,
by definition we have $\P(i \sim j) = \E[a_{i,j}] = p$.
Suppose we have a path $P_{k+1} \coloneqq \{\{1,2\},\{2,3\},\ldots,\{k,k+1\}\}$
on vertices $[k+1]$.
Then, the probability of the path is
\begin{equation*}
\P(P_{k+1}) = \E\biggl[\prod_{i=1}^k a_{i,i+1}\biggr].
\end{equation*}
By conditional independence of the edges and rotation symmetry on sphere,
\begin{equation*}
\begin{split}
\E\biggl[\prod_{i=1}^k a_{i,i+1}\biggr]
&= \E\biggl[\E\biggl[\prod_{i=1}^k a_{i,i+1} \biggm\vert \x_k \biggr]\biggr]
= \E\biggl[\E\biggl[\prod_{i=1}^{k-1} a_{i,i+1} \biggm\vert \x_k \biggr]
            \E[a_{k,k+1} \mid \x_k]\biggr]\\
&= p\E\biggl[\prod_{i=1}^{k-1} a_{i,i+1}\biggr]
= p\P(P_k).\qedhere
\end{split}
\end{equation*}
\end{proof}
Expanding the product of a signed cycle,
\begin{equation*}
\E_{\cG(n,p,d)}\bigl[\kappa_C\bigr]
= \sum_{S \in 2^{C}} (-p)^{k-\abs{S}}
\biggl(\E\biggl[\prod_{e \in S}a_e\biggr]-p^{\abs{S}}\biggr).
\end{equation*}
Since $C$ is a cycle, all proper subsets of $C$ are a union of independent
paths.
By Lemma~\ref{le:path_prob},
\begin{equation*}
\E\biggl[\prod_{e \in S}a_e\biggr] = \P(S) = p^{\abs{S}}
\end{equation*}
for all $S \subset C$ except for $S = C$.
Hence, the expectation of a signed length $k$ cycle in $\cG(n,p,d)$ satisfies
\begin{equation} \label{eq:E_kappak_gnpd}
\E_{\cG(n,p,d)}[\kappa_C] = \E\biggl[\prod_{e\in C}a_e\biggr] - p^k.
\end{equation}

\subsubsection{Signed quadrilaterals}
We start with the expected number of signed cycles of length four,
which are called \emph{signed quadrilaterals}.
By \eqref{eq:E_kappak_gnpd}, the expectation of a signed quadrilateral $C^0$
in $\cG(n,1/2,d)$ is
\begin{equation*}
\E[\kappa_{C^0}]
= \E[\bra_{1,2}\bra_{2,3}\bra_{3,4}\bra_{4,1}]
= \E[a_{1,2}a_{2,3}a_{3,4}a_{4,1}] - \biggl(\frac{1}{2}\biggr)^4.
\end{equation*}
Using Lemma~\ref{le:sqr}, we have that
\begin{equation*}
\E[\kappa_{C^0}] = 2\eta.
\end{equation*}
Hence, by \eqref{eq:eta_bounds},
\begin{equation*}
\frac{1}{2\pi^2}\cdot\frac{1}{d}
\le \E_{\cG(n,1/2,d)}[\kappa_{C^0}]
\le \frac{1}{8}\cdot\frac{1}{d}.
\end{equation*}
Therefore, for absolute constants $C > 0$ and $C' < \infty$, we have that
\begin{equation*}
\frac{Cn^4q^4}{d} \le \E[\kappa_4(\cG(n,1/2,d,q))] \le \frac{C'n^4q^4}{d}.
\end{equation*}
Together with \eqref{eq:E_kappak_gnp}, we have that
\begin{equation*}
\abs{\E[\kappa_4(\cG(n,1/2,d,q))]-\E[\kappa_4(\cG(n,1/2))]}
\le \frac{Cn^4q^4}{d}.
\end{equation*}
By \eqref{eq:V_kappak_gnp}, we also have that
\begin{equation*}
\max \{\Var[\kappa_4(\cG(n,1/2))], \Var[\kappa_4(\cG(n,1/2,d,q))]\}
\ge \Var[\kappa_4(\cG(n,1/2))]
\ge C n^4
\end{equation*}
for an absolute constant $C > 0$.

Therefore, there is an absolute constant $C < \infty$ such that
\begin{equation*}
\frac{(\E[\kappa_4(\cG(n,1/2,d,q))]-\E[\kappa_4(\cG(n,1/2))])^2}
{\max \{\Var[\kappa_4(\cG(n,1/2))], \Var[\kappa_4(\cG(n,1/2,d,q))]\}}
\le \frac{Cn^4 q^{8}}{d^2}.
\end{equation*}
This implies that if detection is possible using this method,
then we should have $n^{2}q^{4}/d \to \infty$.
This is worse than the condition $n^{3}q^{6}/d \to \infty$
under which signed triangles can detect.

\subsubsection{General signed cycle}
Next we estimate the signed length $k$ cycle count in $\cG(n,p,d,q)$
with the help of Lemma~\ref{le:path_prob}.

By Corollary~\ref{co:edge_prob}, the probability of a cycle satisfies
\begin{equation*}
\lvert\P(C^0) - p^k\rvert \le \frac{C_{k,p}}{\sqrt{d}}.
\end{equation*}
Hence, by \eqref{eq:E_kappak_gnpd},
\begin{equation}\label{eq:signed_cycle_bd}
\abs{\E_{\cG(n,p,d)}[\kappa_{C^0}]}
\le \frac{C_{k,p}}{\sqrt{d}}.
\end{equation}
Thus,
\begin{equation*}
\abs{\E[\kappa_k(\cG(n,p,d,q))]}
= \abs{q^k \E[\kappa(\cG(n,p,d))]}
\le \frac{q^k n!}{(n-k)!2k}
\abs{\E_{\cG(n,p,d)}[\kappa_{C^0}]}
\le \frac{C_{k,p}n^kq^k}{\sqrt{d}}
\end{equation*}
for a constant $C_{k,p}$.

Using \eqref{eq:V_kappak_gnp},
\begin{equation*}
\max \{\Var[\kappa_k(\cG(n,p))], \Var[\kappa_k(\cG(n,p,d,q))]\}
\ge \Var[\kappa_k(\cG(n,p))]
\ge C_{k,p} n^k.
\end{equation*}

Therefore, for some $C_{k,p} > 0$,
\begin{equation*}
\frac{(\E[\kappa_k(\cG(n,p,d,q))]-\E[\kappa_k(\cG(n,p))])^2}
{\max \{\Var(\kappa_k(\cG(n,p))),\Var(\kappa_k(\cG(n,p,d,q)))\}}
\le \frac{C_{k,p}n^k q^{2k}}{d}.
\end{equation*}
The above display implies that detection is not possible using the previous
method with a signed length $k$ cycle statistic when $n^kq^{2k}/d \to 0$.
Note that for $k \geq 4$ this
does not rule out all regions not detectable by signed
triangles.
However, based on the computations for signed quadrilaterals,
we believe that the dependence on $d$
in the bound in~\eqref{eq:signed_cycle_bd} is not tight.
Analogously to Conjecture~\ref{cj:E_tauk_gnpd},
we have the following conjecture.

\begin{conjecture} \label{cj:E_kappak_gnpd}
There exists a constant $C_{k,p} < \infty$ such that
\begin{equation*}
\abs{\E_{\cG(n,p,d)}[\kappa_{C^0}]} \le \frac{C_{k,p}}{d^{k/6}}.
\end{equation*}
\end{conjecture}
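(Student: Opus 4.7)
The plan is to express $\E_{\cG(n,p,d)}[\kappa_{C^0}]$ as an explicit series via harmonic analysis on the sphere and then bound it by Parseval. Starting from~\eqref{eq:E_kappak_gnpd} and using that $a_e = s_{t_{p,d}}(\inner{\x_i}{\x_j})$ in $\cG(n,p,d)$, the target equals
\begin{equation*}
\E_{\cG(n,p,d)}[\kappa_{C^0}]
= \E\biggl[\prod_{i=1}^{k} f(\inner{\x_i}{\x_{i+1}})\biggr],
\end{equation*}
where $f(x) \coloneqq s_{t_{p,d}}(x) - p$ and indices are cyclic modulo $k$. I would expand $f$ in the basis of Gegenbauer polynomials $\{G_j^{(d)}\}_{j \ge 0}$ associated with $\SS^{d-1}$, normalized so $G_j^{(d)}(1) = 1$, writing $f = \sum_{j \ge 1} c_j^{(d)} G_j^{(d)}$; the $j=0$ term vanishes because $\E[f] = 0$.

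Next I would apply the Funk--Hecke addition formula iteratively. In its relevant form, for any $\y, \z \in \SS^{d-1}$,
\begin{equation*}
\int_{\SS^{d-1}} G_j^{(d)}(\inner{\x}{\y})\, G_l^{(d)}(\inner{\x}{\z})\, d\sigma(\x)
= \frac{\delta_{j,l}}{N_{j,d}}\, G_j^{(d)}(\inner{\y}{\z}),
\end{equation*}
where $\sigma$ is the uniform probability measure on $\SS^{d-1}$ and $N_{j,d}$ denotes the dimension of the space of degree-$j$ spherical harmonics. Substituting the Gegenbauer series of $f$ into the cycle product and contracting adjacent edges by integrating out $\x_2, \x_3, \ldots, \x_{k-1}$ in turn, the Kronecker delta forces a single common harmonic index $j$ on all $k$ edges; the $k-2$ intermediate contractions together with the final contraction over the two remaining edges between $\x_1$ and $\x_k$ each contribute a factor $1/N_{j,d}$. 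This yields the clean identity
\begin{equation*}
\E_{\cG(n,p,d)}[\kappa_{C^0}] = \sum_{j \ge 1} \frac{(c_j^{(d)})^k}{N_{j,d}^{k-1}}.
\end{equation*}

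Finally, I would bound the coefficients by Parseval: since $\|G_j^{(d)}\|_{L^2(\sigma)}^2 = 1/N_{j,d}$, we have $\sum_{j \ge 1} (c_j^{(d)})^2 / N_{j,d} = \|f\|_{L^2}^2 = p(1-p)$. Setting $a_j \coloneqq (c_j^{(d)})^2 / N_{j,d} \in [0, p(1-p)]$, for $k \ge 3$ one estimates
\begin{equation*}
\bigl\lvert \E[\kappa_{C^0}] \bigr\rvert
\le \sum_{j \ge 1} \frac{a_j^{k/2}}{N_{j,d}^{k/2-1}}
\le (p(1-p))^{k/2-1} \sum_{j \ge 1} \frac{a_j}{N_{j,d}^{k/2-1}}
\le \frac{(p(1-p))^{k/2}}{d^{k/2-1}},
\end{equation*}
using $a_j \le p(1-p)$ together with the elementary dimension bound $N_{j,d} \ge N_{1,d} = d$ for all $j \ge 1$ and $d \ge 2$. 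Since $k/2 - 1 \ge k/6$ whenever $k \ge 3$, this is strictly stronger than the conjectured bound $C_{k,p}/d^{k/6}$.

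The hard part will be justifying the term-by-term use of Funk--Hecke on the possibly slowly convergent Gegenbauer series of the discontinuous function $f$. I would handle this by first working with the truncation $f_N \coloneqq \sum_{j \le N} c_j^{(d)} G_j^{(d)}$, for which the identity reduces to a direct polynomial computation, and then passing to the limit using $\|f - f_N\|_{L^2(\sigma)} \to 0$ and the uniform bound $\|f\|_\infty \le 1$ via dominated convergence. A minor further step is verifying $N_{j,d} \ge d$ for $j \ge 1$, which follows directly from the closed form $N_{j,d} = \binom{d+j-1}{j} - \binom{d+j-3}{j-2}$. The same approach should address the analogous clique conjecture~\ref{cj:E_tauk_gnpd}, though the contraction pattern on the complete graph is substantially more involved.
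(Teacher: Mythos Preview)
Your argument is correct and, notably, the paper does \emph{not} prove this statement at all: it is presented as a conjecture, supported only by the heuristic that the signed-cycle test must not outperform the known $n^3/d$ phase transition for $\cG(n,p,d)$. You have therefore supplied a genuine proof where the paper has none, and your exponent $d^{k/2-1}$ is strictly better than the conjectured $d^{k/6}$ for every $k \ge 4$. In particular, your bound matches exactly the paper's own direct calculation for $k=4$ at $p=1/2$ (where $\E[\kappa_{C^0}] = 2\eta \asymp 1/d$), confirming that $k/2-1$ is the right exponent there.

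The key observation you exploit, which the paper does not, is that the cycle expectation is precisely the operator trace $\operatorname{tr}(T_f^k)$ for the integral operator $T_f$ on $L^2(\SS^{d-1})$ with kernel $f(\inner{\cdot}{\cdot})$; Funk--Hecke diagonalizes $T_f$ with eigenvalues $c_j/N_{j,d}$ of multiplicity $N_{j,d}$, giving your clean series. Since $T_f$ is Hilbert--Schmidt (indeed $\|T_f\|_{HS}^2 = \|f\|_{L^2}^2 = p(1-p)$), the trace identity for $T_f^k$ with $k \ge 3$ is automatic, and the convergence concern you flag is in fact already handled once framed this way; the truncation argument you sketch works too but is not strictly needed. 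Your Parseval bound then drops out immediately. The paper's approach for the cases it does handle ($k=3$ generally, $k=4$ at $p=1/2$) is instead a bare-hands computation of the relevant spherical integrals via projection arguments and the explicit angle density; this gives matching lower bounds as well, which your method in its current form does not, but for the conjectured upper bound your route is both shorter and sharper.
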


Assuming Conjecture~\ref{cj:E_kappak_gnpd} holds,
detection is not possible with this method when $n^{3}q^{6}/d \to 0$.
This would imply that all signed cycles have the same detecting power.
However, as witnessed by signed quadrilaterals,
the above conjecture is not tight,
suggesting that signed triangles yield the best bound.

\section{Random dot product graphs} \label{se:rdpg}
The starting point of this paper is the random geometric graph
$\cG(n,p,d)$, where the underlying points $\x_1, \ldots, \x_n \in \RR^d$
are distributed uniformly on the unit sphere.
Our main object of study, $\cG(n,p,d,q)$, builds upon $\cG(n,p,d)$.
Note that in $\cG(n,p,d)$ there is an edge between two nodes
if and only if the dot product of the corresponding latent vectors
is greater than some threshold (see~\eqref{eq:dot_product}).
Models with this property are known as
\emph{random dot product graphs}
and have been widely studied~\cite{athreya2018statistical,smith2019geometry}.

A natural variant of $\cG(n,p,d)$ is to take
$\x_1, \ldots, \x_n \in \RR^d$ to be i.i.d.\ standard normal vectors
and to consider the corresponding random dot product graph.
In this section we extend our results to this variant;
the proofs are kept brief, highlighting only the differences.

Let $\x_1, \ldots, \x_n \in \RR^d$ be i.i.d.\ standard normal vectors
and define the threshold $u_{p,d}$ by
\begin{equation*}
\P(\inner{\x_i}{\x_j} \ge u_{p,d}) = p.
\end{equation*}
Consider the connection function
\begin{equation*}
\phi_q(x) = (1 - q)p + qs_{u_{p,d}}(x).
\end{equation*}
We denote the random graph generated using $\x_1, \ldots, \x_n \in \RR^d$ and
this connection function by $\dG(n,p,d,q)$.
The results of Theorem~\ref{th:det} also hold under this setting,
which we state as the following theorem.
\begin{theorem}[Detecting geometry]\label{th:det_dp}
Let $p \in (0, 1)$ be fixed.
\begin{thenum}
\item\label{th:det_dp_lower}(Impossibility)
If $n q \to 0$ or $n^3 q^2 / d \to 0$, then
\begin{equation*}
\tv{\cG(n, p)}{\dot{\cG}(n, p, d, q)} \to 0.
\end{equation*}
\item\label{th:det_dp_upper}(Possibility)
If $n^3 q^6 / d \to \infty$, then
\begin{equation*}
\tv{\cG(n, p)}{\dot{\cG}(n, p, d, q)} \to 1.
\end{equation*}
\end{thenum}
\end{theorem}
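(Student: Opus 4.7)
The plan is to mirror the strategy for Theorem~\ref{th:det}, exploiting the fact that $\dG(n,p,d,q)$ differs from $\cG(n,p,d,q)$ only in that the latent vectors are $\z_i \sim \cN(\0,\I_d)$ rather than their normalized versions $\hat{\z}_i = \z_i/\norm{\z_i}$. This means the inner products $\inner{\z_i}{\z_j}$ are already the off-diagonal entries $w_{i,j}$ of the Wishart matrix $\W = \Z\Z^\top$, with no normalization step needed. Set $\delta'_{p,d} \coloneqq t_p - u_{p,d}/\sqrt{d}$ so that the Gaussian analogue of Corollary~\ref{co:delta_pd} becomes the claim $(\delta'_{p,d})^2 \le C_p/d^2$; this can be proved by directly analyzing the quantile $u_{p,d}$ using the representation $\inner{\z_1}{\z_2} \stackrel{d}{=} \norm{\z_1} \cdot N$ with $N \sim \cN(0,1)$ independent of $\norm{\z_1}$, together with the same beta-to-gamma approximation scheme as in Lemma~\ref{le:delta_pd}.

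For the impossibility part (Theorem~\ref{th:det_dp_lower}), the $nq \to 0$ regime follows immediately from the argument in Section~\ref{se:weak}, since the computation in \eqref{eq:KL_Jensen}--\eqref{eq:kl_up_a1} only uses that the marginal edge probability is $p$ and the mixture structure \eqref{eq:link} of the connection function. For the $n^3q^2/d \to 0$ regime, I would follow Section~\ref{se:impo} with the simplification that no normalization term $E_1$ appears. Specifically, defining $\H \coloneqq (\1\1^\top - \B)\circ\M + \B\circ(\Y + \delta'_{p,d}\1\1^\top)$, one shows $\tv{\cG(n,p)}{\dG(n,p,d,q)} \le \E_\B \tv{\M'}{\H}$ directly, and then bounds $\E_\B \KL(\H \parallel \M')$ by chain rule. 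Conditional on $\B$ and $\Z_k$, the row $\h_{k+1}$ is Gaussian with mean $\delta'_{p,d}\b_{k+1}$ and covariance $\D_k(\Z_k\Z_k^\top/d)\D_k + \I_k - \D_k$, so Proposition~\ref{pr:kl_norm} gives the same two-term expression as \eqref{eq:two_terms}, bounded via the $(\delta'_{p,d})^2$ estimate and Lemma~\ref{le:logdet}. Summing over $k$ and applying Pinsker yields $\tv{\cG(n,p)}{\dG(n,p,d,q)} \le C_p\sqrt{n^2q/d^2} + C\sqrt{n^3q^2/d}$ for $d \ge 2n$, from which the claim follows as at the end of Section~\ref{se:impo}.

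For the possibility part (Theorem~\ref{th:det_dp_upper}), I would again use the signed triangle statistic. Lemma~\ref{le:gnpdq_gnpd} carries over unchanged since its proof only uses the mixture form of the connection function and the definition of $u_{p,d}$. The main task is to establish a Gaussian analogue of Lemma~\ref{le:P_D}: writing $\z_i = r_i \hat{\z}_i$ with $r_i = \norm{\z_i}$ independent of $\hat{\z}_i \sim \mathrm{Unif}(\SS^{d-1})$, one has $\inner{\z_i}{\z_j} \ge u_{p,d}$ iff $\inner{\hat{\z}_i}{\hat{\z}_j} \ge u_{p,d}/(r_i r_j)$. Conditioning on the norms $r_1, r_2, r_3$ (which concentrate at $\sqrt{d}$ with $O(1)$ fluctuations) reduces the three-point event $E^\Delta$ essentially to the spherical one with threshold $t \approx t_p/\sqrt{d}$, and Lemma~\ref{le:P_D} applied fiberwise with standard Gaussian concentration for $r_i$ yields $\P(E^\Delta) \ge p^3(1 + C_p/\sqrt{d})$. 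The variance estimates are then obtained by the same case analysis as in the subsection on fixed $p \in (0,1)$, using \eqref{eq:E_s23_s31} after conditioning on the norms. Combining these bounds with Chebyshev's inequality, exactly as in Section~\ref{sec:signed_triangles_conclusion}, completes the proof.

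The main obstacle is the sharp quantile estimate $(\delta'_{p,d})^2 \le C_p/d^2$. Unlike the spherical case, where the exact $\mathrm{Beta}(\tfrac12,\tfrac{d-1}{2})$ distribution of $z_1^2/\norm{\z}^2$ gives a direct gamma-type approximation, in the Gaussian case $\inner{\z_1}{\z_2}/\sqrt{d}$ has a symmetric variance-gamma type distribution whose quantiles must be analyzed via the mixture representation $\norm{\z_1} N /\sqrt{d}$; quantifying the $O(1/d)$ rate rigorously requires a careful Taylor expansion of $\E[\Phi(u_{p,d}/(\sqrt{d} R))]$ around $R = 1$, controlling the second-order terms using moments of a suitably truncated $\chi^2(d)/d$ random variable. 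Once this rate is in hand, the rest of the proof is a largely mechanical adaptation of the arguments in Sections~\ref{se:impsb} and~\ref{se:posb}.
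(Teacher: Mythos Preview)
Your overall structure is right and matches the paper's approach closely, but there are two points where your proposal and the paper diverge, one a simplification you are missing and one a genuine gap.

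\textbf{Part (a): you are making the quantile estimate harder than necessary.} You identify $(\delta'_{p,d})^2 \le C_p/d^2$ as the ``main obstacle'' and propose a delicate Taylor-expansion argument to establish it. The paper does not need this sharp rate: it simply invokes the bound $\lvert u_{p,d}/\sqrt{d} - t_p\rvert \le C_p/\sqrt{d}$ (Lemma~\ref{le:upd}, a corollary of \cite[Lemma~13.4]{eldan2020information}). With this weaker estimate the $\delta'_{p,d}$-term in the KL bound contributes $C_p\sqrt{n^2q/d}$ rather than your $C_p\sqrt{n^2q/d^2}$, but since $n^2q/d \le (n^3q^2/d)^{1/2}$ whenever $d \ge n$, this term already vanishes under the hypothesis $n^3q^2/d \to 0$. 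So the obstacle you flag is not one.

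\textbf{Part (b): loss of exact rotation invariance.} Your plan to condition on the norms $r_i = \norm{\z_i}$ and reduce to the spherical case is sound for bounding $\P(E^\Delta)$ from below, but your claim that ``the same case analysis'' then applies to the variance is not quite right. In the spherical model, two facts used crucially were $\P(E^\Lambda) = p^2$ exactly and $V_{\{1,2,3\},\{1,4,5\}} = 0$ exactly, both obtained by fixing $\x_1 = \e_1$ via rotation invariance. In the Gaussian model the random norm $\norm{\z_1}$ is informative: conditioning on $\z_1$, the marginal edge probability from vertex~$1$ is no longer $p$, so neither identity holds. The paper handles this by citing \cite{eldan2020information} for $\P(E^\Lambda) - p^2 \le 8/d$ (Lemma~\ref{le:Gdp_P_L}) and for the bound~\eqref{eq:Gdp_E_s23_s31}, and then explicitly bounds $V_{\{1,2,3\},\{1,4,5\}} \le 80q^6/d$ via Jensen, picking up an extra $n^5q^6/d$ term in the variance and hence an extra $1/n$ in the final TV lower bound. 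Your conditioning-on-norms approach could recover all of this with additional work (the deviations come from $\Var_{r_1}[\,\cdot\,]$-type terms of order $1/d$), but your proposal as written does not acknowledge that these terms are nonzero, so the expectation lower bound and the variance upper bound are both incomplete.
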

\begin{remark}
There are two main differences between $\cG(n,p,d,q)$ and $\dG(n,p,d,q)$:
\begin{enumerate}[label=(\arabic*)]
\item First, the coordinates of $\x_i \in \RR^d$ are independent in
$\dG(n,p,d,q)$; this is not the case for $\cG(n,p,d,q)$.
\item On the other hand, while $\x_1, \ldots, \x_n \in \RR^d$ have
rotation symmetry in both models,
in $\dG(n,p,d,q)$ these vectors no longer have unit norm,
which must be accounted for.
\end{enumerate}
The first property simplifies the proof of Theorem~\ref{th:det_dp_lower};
however,
the second one adds complexity to the proof of Theorem~\ref{th:det_dp_upper}.
\end{remark}
Since the proofs are quite similar to those for Theorem~\ref{th:det},
we only sketch them, highlighting the important adaptations.

\subsection{Proof of part (a)}
The proof in the regime $nq \to 0$,
which is presented in Section~\ref{se:weak},
only uses the property that $\P(\inner{\x_i}{\x_j} \ge t_{p,d}) = p$.
Hence, it holds directly for $\dG(n,p,d,q)$ as well.

Now we consider the other regime.
If we define $\R \coloneqq (1 - q) s_{t_p}(\M) + q s_{u_{p,d}}(\Z\Z^\top)$,
following the same arguments, we have
\begin{equation*}
\tv{\cG(n, p)}{\dG(n, p, d, q)} \le \tv{\bP}{\R}.
\end{equation*}
Note that for $1 \le i < j \le n$,
\begin{equation*}
\inner{\z_i}{\z_j} = w_{i,j} = \sqrt{d} y_{i,j}.
\end{equation*}
Then we have $\R = (1 - q) s_{t_p}(\M) + q s_{u_{p,d}/\sqrt{d}}(\Y)$.
Let $ \H \coloneqq (\1\1^\top - \B) \circ \M
+ \B \circ (\Y + (t_p - u_{p,d}/\sqrt{d})\1\1^\top)$.
We can implement the same procedure and obtain the upper bound on the distance,
which gives
\begin{equation*}
\tv{\bP}{\R} \le \E_{\B} \tv{\M'}{\H}.
\end{equation*}
The challenge is that we have to bound $\lvert u_{p,d}/\sqrt{d} - t_p\rvert$
from above.
The following lemma can be derived as a corollary of Lemma~13.4 in
\cite{eldan2020information}.
\begin{lemma} \label{le:upd}
There exists a constant $C_p < \infty$, depending only on $p$, such that
\begin{equation*}
\lvert u_{p,d}/\sqrt{d} - t_p \rvert \le \frac{C_p}{\sqrt{d}}.
\end{equation*}
\end{lemma}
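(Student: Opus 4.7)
The plan is to exploit the conditional Gaussianity of $\inner{\x_i}{\x_j}$ given $\x_i$: since $\x_j \sim \cN(\0, \I_d)$ independently, $\inner{\x_i}{\x_j} \mid \x_i \sim \cN(0, \norm{\x_i}^2)$. Writing $v \coloneqq u_{p,d}/\sqrt{d}$ and $R \coloneqq \norm{\x_i}/\sqrt{d}$ (so $d R^2 \sim \chi^2(d)$), the defining equation for $u_{p,d}$ becomes $p = \E[\overline{\Phi}(v/R)]$, where $\overline{\Phi} \coloneqq 1 - \Phi$. Since also $p = \overline{\Phi}(t_p)$, the task reduces to showing that $\E[\overline{\Phi}(v/R)]$ is close to $\overline{\Phi}(v)$ and then inverting the smooth map $\overline{\Phi}$ near $t_p$.

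First I would establish an a priori bound $|v| \le M_p$, depending only on $p$. Indeed, if $|v|$ were much larger than $|t_p|$, concentration of $R$ around $1$ would force $\E[\overline{\Phi}(v/R)]$ uniformly away from $p$, contradicting the definition of~$v$. With $v$ thus confined to a compact interval, I would Taylor expand $g(r) \coloneqq \overline{\Phi}(v/r)$ around $r = 1$, noting that $g'(1) = \varphi(v)\,v$ and that $g''$ is bounded on compact subsets of $r > 0$. Taking expectations in
\[
g(R) - g(1) = g'(1)(R - 1) + \tfrac{1}{2} g''(\xi)(R - 1)^2
\]
and using $|\E[R - 1]| = O(1/d)$ (from $\E[R^2] = 1$ combined with the Wendel-type bound already recorded in~\eqref{eq:wendel}) together with $\E[(R - 1)^2] = O(1/d)$, the leading contribution is at most $C_p/d$ for some constant $C_p$ depending only on $p$.

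The step that requires the most care, and the main obstacle, is controlling the Taylor remainder on the event where $R$ is small, since $g''(r)$ blows up as $r \to 0$. I would handle this by splitting the expectation over $\{R \ge 1/2\}$, on which $|g''|$ is uniformly bounded when $|v| \le M_p$, and its complement; standard sub-exponential tail bounds for $\chi^2(d)/d$ give $\P(R < 1/2) \le e^{-cd}$, so combined with the trivial bound $|\overline{\Phi}(v/R) - \overline{\Phi}(v)| \le 1$ the low-$R$ event contributes a quantity exponentially small in $d$, which is absorbed into the $O(1/d)$ main term. This yields $|\overline{\Phi}(v) - p| \le C_p/d$, and since $\overline{\Phi}$ is smooth at $t_p$ with $|\overline{\Phi}'(t_p)| = \varphi(t_p) > 0$, the mean value theorem gives $|v - t_p| \le C_p'/d$, which is in fact strictly stronger than the claimed bound $|u_{p,d}/\sqrt{d} - t_p| \le C_p/\sqrt{d}$.
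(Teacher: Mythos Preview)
Your argument is correct and in fact yields the sharper bound $|u_{p,d}/\sqrt{d} - t_p| \le C_p/d$, stronger than the $C_p/\sqrt{d}$ stated in the lemma. The conditional-Gaussianity reduction $p = \E[\overline{\Phi}(v/R)]$, the a~priori compactness of $v$, the Taylor expansion of $r \mapsto \overline{\Phi}(v/r)$ with the split on $\{R \ge 1/2\}$, and the final inversion via the mean value theorem all go through as you describe; the only steps worth writing out carefully are the a~priori bound (make the ``uniformly away from $p$'' quantitative for all $d$ large enough, and absorb small $d$ into $C_p$) and the fact that on $\{R \ge 1/2\}$ the intermediate point $\xi$ in the Lagrange remainder also lies in $[1/2,\infty)$, which you implicitly use.

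By contrast, the paper gives no self-contained proof of this lemma at all: it simply records that the statement can be read off as a corollary of Lemma~13.4 in Eldan--Mikulincer~\cite{eldan2020information}. So your approach is genuinely different in that it is direct and elementary, avoids importing external machinery, and even improves the rate from $d^{-1/2}$ to $d^{-1}$ (mirroring the improvement the paper itself obtains in Lemma~\ref{le:delta_pd} over Lemma~\ref{le:tpd_de} for the spherical threshold). The cost is that your argument is longer than a one-line citation; the benefit is that it is self-contained and sharper.
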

Plugging this estimate into the proof,
we conclude that there exist constants $C, C_p$, such that for $d \ge 2n$,
\begin{equation*}
\tv{\cG(n, p)}{\dG(n, p, d, q)} \le C_p \sqrt{\frac{n^2q}{d}}
+ C\biggl(\sqrt{\frac{n^2q}{d}} + \sqrt{\frac{n^3q^2}{d}}\biggr),
\end{equation*}
similarly to~\eqref{eq:tv_upper}. The conclusion follows.

\subsection{Proof of part (b)}
In the case $p = 1/2$, the random dot product graph is essentially the same
as the random geometric graph.
Hence, the results directly carry over.

For a fixed value of $p \in (0,1)$,
the proofs are similar but require the estimation of several quantities under a
different setting.

We first present technical lemmas for bounding the probabilities of
$E^\Lambda$ and $E^\Delta$ defined as counterparts of
\eqref{eq:cherry} and \eqref{eq:delta} respectively,
which are derived as corollaries from \cite{eldan2020information}.
\begin{lemma}[{Corollary of \cite[Lemma~13.10]{eldan2020information}}]
\label{le:Gdp_P_L}
For a fixed $p \in (0, 1)$, we have
\begin{equation*}
\P(E^\Lambda) - p^2 \le \frac{8}{d}.
\end{equation*}
\end{lemma}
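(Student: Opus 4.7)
My approach is to exploit the conditional independence structure that standard Gaussian vectors provide, and then reduce the problem to a one-dimensional variance bound.

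First, condition on $\x_1$. Since $\x_2$ and $\x_3$ are independent standard Gaussians in $\RR^d$ and $\x_1$ is fixed, the inner products $\inner{\x_1}{\x_2}$ and $\inner{\x_1}{\x_3}$ are conditionally independent, each distributed as $\cN(0,\norm{\x_1}^2)$. Hence
\begin{equation*}
\P(E^\Lambda \mid \x_1)
= \bigl(1-\Phi(u_{p,d}/\norm{\x_1})\bigr)^2,
\end{equation*}
and, using the defining property of $u_{p,d}$, we also have $p = \E\bigl[1-\Phi(u_{p,d}/\norm{\x_1})\bigr]$. Taking expectations and subtracting gives the clean identity
\begin{equation*}
\P(E^\Lambda) - p^2
= \Var\bigl(1 - \Phi(u_{p,d}/\norm{\x_1})\bigr).
\end{equation*}

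Second, I would bound this variance by Gaussian Poincaré applied to the radial function $f(\x) \coloneqq 1-\Phi(u_{p,d}/\norm{\x})$ on $\RR^d$. A direct computation yields
\begin{equation*}
\norm{\nabla f(\x)}^2
= \varphi(u_{p,d}/\norm{\x})^2 \cdot \frac{u_{p,d}^2}{\norm{\x}^4}
\le \frac{u_{p,d}^2}{2\pi\,\norm{\x}^4}.
\end{equation*}
Since $\norm{\x_1}^2 \sim \chi^2(d)$ satisfies $\E[\norm{\x_1}^{-4}] = 1/((d-2)(d-4))$ for $d>4$, and since Lemma~\ref{le:upd} shows $u_{p,d}^2 \le C_p d$, Gaussian Poincaré delivers a bound of the form $\Var(f(\x_1)) \le C_p'/d$. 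This is exactly the required scaling.

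The main obstacle — and the reason the authors state this as a \emph{corollary} of \cite[Lemma~13.10]{eldan2020information} rather than giving a self-contained proof — is obtaining a clean absolute constant ($8$) rather than a $p$-dependent constant. The Gaussian Poincaré route above naturally produces a $p$-dependent factor through $u_{p,d}^2 \le C_p d$; getting a universal constant $8$ requires the more refined analysis carried out in \cite{eldan2020information}, which bounds the relevant variance uniformly in $p$ (by exploiting that $\varphi(u_{p,d}/\norm{\x})$ is also small precisely where $u_{p,d}/\sqrt{d}$ is large). Given Lemma~13.10 of that paper, I would simply verify that our expression $\Var(1-\Phi(u_{p,d}/\norm{\x_1}))$ matches the quantity bounded there, and read off the $8/d$ estimate.
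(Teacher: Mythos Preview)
Your reduction is correct and clean: conditioning on $\x_1$ gives conditional independence, whence $\P(E^\Lambda)-p^2=\Var\bigl(1-\Phi(u_{p,d}/\norm{\x_1})\bigr)$, and Gaussian Poincar\'e is the natural tool. Since the paper provides no proof of its own here and simply cites the external lemma, your argument is in fact more detailed than the paper's treatment.

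Where you sell yourself short is the constant. You bound $\varphi^2\le 1/(2\pi)$ and $u_{p,d}^2\le C_p d$ separately, which indeed produces a $p$-dependent factor. But if you keep the two together, writing $t=u_{p,d}/\norm{\x}$ so that
\[
\norm{\nabla f(\x)}^2=\frac{t^2\varphi(t)^2}{\norm{\x}^2}=\frac{t^2e^{-t^2}}{2\pi\,\norm{\x}^2}\le \frac{1}{2\pi e\,\norm{\x}^2},
\]
(since $s e^{-s}\le 1/e$), then Gaussian Poincar\'e with $\E[\norm{\x_1}^{-2}]=1/(d-2)$ for $d\ge 3$ yields
\[
\P(E^\Lambda)-p^2\le \frac{1}{2\pi e(d-2)},
\]
which is uniform in $p$ and in fact much smaller than $8/d$ for all $d\ge 3$ (the cases $d\le 2$ being trivial). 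So the ``more refined analysis'' you attribute to the cited reference is not actually needed: your own Poincar\'e route, with this one-line sharpening, already delivers a universal constant.
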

\begin{lemma}[{Corollary of \cite[Theorem~13.5]{eldan2020information}}]
\label{le:Gdp_P_D}
For a fixed $p \in (0, 1)$, there exist some constants $C_p, C_p', C_p'' > 0$
depending only on $p$ such that for $d \ge C_p$,
\begin{equation*}
\frac{C_p'}{\sqrt{d}} \le \P(E^\Delta) - p^3
\le \frac{C_p''}{\sqrt{d}}.
\end{equation*}
\end{lemma}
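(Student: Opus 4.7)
The plan is to reduce the Gaussian three-point probability to its spherical counterpart via the polar decomposition of each Gaussian vector, combine this with a two-sided spherical triangle estimate, and account for the fluctuations of the Gaussian norms by a careful perturbation argument. Write $\x_i = r_i \hat{\x}_i$ with $r_i \coloneqq \norm{\x_i}$ and $\hat{\x}_i \coloneqq \x_i/r_i$; since $\x_i \sim \cN(\0, \I_d)$, we have $r_i^2 \sim \chi^2(d)$ independent of $\hat{\x}_i \sim \mathrm{Unif}(\SS^{d-1})$. Setting $\tau_{ij} \coloneqq u_{p,d}/(r_i r_j)$ and conditioning on $\r \coloneqq (r_1, r_2, r_3)$ yields
\begin{equation*}
\P(E^\Delta) = \E_\r\bigl[P_d(\tau_{12}, \tau_{23}, \tau_{13})\bigr],
\end{equation*}
where $P_d(t_1, t_2, t_3)$ denotes the probability that three i.i.d.\ uniform vectors in $\SS^{d-1}$ have all pairwise inner products exceeding $t_1, t_2, t_3$.

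The central estimate is the two-sided spherical bound $P_d(t_{p,d}, t_{p,d}, t_{p,d}) = p^3 + \Theta_p(1/\sqrt{d})$. The lower bound here is exactly Lemma~\ref{le:P_D}. For the matching upper bound, I would extend the integration argument used in the proof of Lemma~\ref{le:tri_p=1/2} to general $p$: fixing $\hat{\x}_1$ and $\hat{\x}_2$, the conditional probability that $\hat{\x}_3$ lies in the intersection of the two spherical caps of aperture $\arccos(t_{p,d})$ can be expressed as a function of the angle $\theta(\hat{\x}_1, \hat{\x}_2)$; integrating this against the angular density $h(\theta) = \sin^{d-2}\theta/\zeta$ from Proposition~\ref{pr:angle_density}, and using the elementary bounds $\sin\theta \asymp \theta$ from~\eqref{eq:theta_sin} together with Wendel's inequality~\eqref{eq:wendel} to control the Gamma-function ratios, yields a matching $O_p(1/\sqrt{d})$ upper bound with constant depending only on $p$ through the location of $\arccos(t_{p,d})$ relative to $\pi/2$.

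The final step is to pass from the spherical estimate at the deterministic threshold $t_{p,d}$ to the Gaussian expectation over the random thresholds $\tau_{ij}$. By standard $\chi^2$ concentration, $r_i^2 = d(1+\epsilon_i)$ with $\E[\epsilon_i] = 0$, $\Var[\epsilon_i] = 2/d$, and $\abs{\epsilon_i} = O_p(1/\sqrt{d})$ on a high-probability event; combined with Lemma~\ref{le:upd} and Lemma~\ref{le:delta_pd}, a Taylor expansion of $(r_i r_j)^{-1}$ gives $\tau_{ij} - t_{p,d} = O_p(1/d)$ on this event. The main obstacle is the perturbation estimate: each inner product $\inner{\hat{\x}_i}{\hat{\x}_j}$ has density of order $\sqrt{d}$ near $t_{p,d}$, so $P_d$ has partial derivatives of size $O(\sqrt{d})$ near the diagonal, and a naive pointwise bound produces a correction of order $\sqrt{d} \cdot 1/d = 1/\sqrt{d}$, comparable to the target. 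The key anchor that closes the argument is that by construction of $u_{p,d}$, $\E_\r[\P(\inner{\hat{\x}_i}{\hat{\x}_j} \ge \tau_{ij} \mid r_i, r_j)] = p$ matches the spherical marginal exactly; combined with a second-order Taylor expansion of $\E_\r[P_d(\tau)]$ around $(t_{p,d}, t_{p,d}, t_{p,d})$ this cancels the dangerous first-order correction, leaving only an $O(1/d)$ residue that is negligible. The rare event $\{\max_i \abs{\epsilon_i} \gg 1/\sqrt{d}\}$ contributes at most its probability (super-polynomially small by sub-Gaussian $\chi^2$ tails) and hence is absorbed into the error.
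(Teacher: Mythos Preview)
The paper does not prove this lemma at all; it is stated as a direct corollary of Theorem~13.5 in \cite{eldan2020information} and simply quoted. Your proposal is therefore a genuinely different route: you attempt a self-contained reduction to the spherical case via polar decomposition and a perturbation argument, rather than invoking the anisotropic machinery of \cite{eldan2020information}.

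Two remarks on your outline. First, the spherical upper bound $P_d(t_{p,d},t_{p,d},t_{p,d}) - p^3 \le C_p''/\sqrt{d}$ for general $p$ is already available in the paper as Corollary~\ref{co:edge_prob} (take $k=3$ and the triangle edge set), which is derived from the impossibility bound~\eqref{eq:tv_upper}; you do not need to extend the $p=1/2$ cap-intersection computation, which is genuinely harder for $p\neq 1/2$ because the caps are no longer hemispheres and the projection argument breaks down.

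Second, and more importantly, your cancellation step is stated too loosely. The marginal identity $\E_\r[F_d(\tau_{ij})] = p$ with $F_d(t) = \P(\inner{\hat\x_i}{\hat\x_j}\ge t)$ does \emph{not} directly say $\E_\r[\tau_{ij}] = t_{p,d}$, so the first-order term in a Taylor expansion of $P_d$ in the $\tau$-variables does not vanish on the nose. What one actually needs is a nested expansion: from $\E[F_d(\tau_{ij})] = F_d(t_{p,d})$ and $F_d'' / F_d' = O(\sqrt d)$ one deduces $\E[\tau_{ij}-t_{p,d}] = O(\sqrt d)\cdot \E[(\tau_{ij}-t_{p,d})^2] = O(d^{-3/2})$, and then $\partial P_d \cdot \E[\tau_{ij}-t_{p,d}] = O(\sqrt d)\cdot O(d^{-3/2}) = O(1/d)$. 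Equivalently, reparametrize by $q_{ij} = F_d(\tau_{ij})$ so that $\E[q_{ij}] = p$ exactly and the first-order term genuinely vanishes; one must then check that the reparametrized function $\tilde P_d(q_1,q_2,q_3)$ has second derivatives that are $O(1)$ rather than growing with $d$. This last point (controlling $\partial_t^2 P_d$ and the conditional-probability derivative it contains) is where the real work lies, and your sketch does not address it. The idea is sound, but as written the ``$O(1/d)$ residue'' is asserted rather than established.
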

Plugging Lemma~\ref{le:Gdp_P_L} and Lemma~\ref{le:Gdp_P_D} into the estimate for
$\E[\tau_{\{1,2,3\}}]$,
we obtain the same lower bounds as for $\cG(n,p,d,q)$.

For estimating the variance,
we still have
\begin{equation*}
V_{\{1,2,3\},\{4,5,6\}} = 0 \quad
\text{and} \quad V_{\{1,2,3\},\{1,2,3\}} \le 1.
\end{equation*}
However, bounding $V_{\{1,2,3\},\{1,2,4\}}$ and $V_{\{1,2,3\},\{1,4,5\}}$
requires the following estimate,
which follows from the proof of~\cite[Lemma~13.11]{eldan2020information}:
\begin{equation} \label{eq:Gdp_E_s23_s31}
\E_{\x_1,\x_2}[\E_{\x_3}[\brs_{2,3}\brs_{3,1}]^2]
\le \frac{80}{d}.
\end{equation}
Replacing \eqref{eq:E_s23_s31} with \eqref{eq:Gdp_E_s23_s31},
we obtain the same result for $V_{\{1,2,3\},\{1,2,4\}}$.

For two signed triangles sharing only one vertex,
we have
\begin{equation*}
\begin{split}
\E[\tau_{\{1,2,3\}}\tau_{\{1,4,5\}}]
&= \E[\E[\tau_{\{1,2,3\}}\tau_{\{1,4,5\}} \mid \x_1]]
= \E[\E[\tau_{\{1,2,3\}} \mid \x_1]^2]\\
&= \E_{\x_1}[\E_{\x_2,\x_3}[\E[\bra_{1,2}\bra_{2,3}\bra_{3,1}
   \mid \x_1, \x_2, \x_3]]^2]
= q^6 \E_{\x_1}[\E_{\x_2,\x_3}[\brs_{1,2}\brs_{2,3}\brs_{3,1}]^2].
\end{split}
\end{equation*}
By Jensen's inequality,
\begin{equation*}
\begin{split}
\E_{\x_1}[\E_{\x_2,\x_3}[\brs_{1,2}\brs_{2,3}\brs_{3,1}]^2]
&\le \E_{\x_1,\x_2}[\E_{\x_3}[\brs_{1,2}\brs_{2,3}\brs_{3,1}]^2]
= \E_{\x_1,\x_2}[\brs_{1,2}^2\E_{\x_3}[\brs_{2,3}\brs_{3,1}]^2]\\
&\le \E_{\x_1,\x_2}[\E_{\x_3}[\brs_{2,3}\brs_{3,1}]^2].
\end{split}
\end{equation*}
Using \eqref{eq:Gdp_E_s23_s31} again, we obtain
\begin{equation*}
\E[\tau_{\{1,2,3\}}\tau_{\{1,4,5\}}] \le \frac{80q^6}{d}.
\end{equation*}
Therefore, similarly
\begin{equation*}
V_{\{1,2,3\},\{1,4,5\}} \le \E[\tau_{\{1,2,3\}}\tau_{\{1,4,5\}}]
\le \frac{80q^6}{d}.
\end{equation*}

Inserting the above estimates into~\eqref{eq:Var_tau3},
we conclude that for an absolute constant $C$,
\begin{equation*}
\Var[\tau_3(\dG(n,p,d,q))]
\le C\biggl(n^3 + \frac{n^4q^4}{d} + \frac{n^5q^6}{d}\biggr).
\end{equation*}

From the previous analyses,
for a fixed $p \in (0, 1)$,
there exist constants $C_p > 0$ and $C < \infty$ such that
\begin{equation*}
\E_{\cG(n,p)}[\tau_3(G)] = 0, \quad
\E_{\dG(n,p,d,q)}[\tau_3(G)] \ge \frac{C_pn^3q^3}{\sqrt{d}}
\end{equation*}
and
\begin{equation*}
V_m \coloneqq \max \{\Var[\tau_3\bigl(\cG(n, p))],
\Var[\tau_3(\dG(n, p, d, q))]\}
\le C\biggl(n^3 + \frac{n^4 q^4}{d} + \frac{n^5q^6}{d}\biggr).
\end{equation*}
Repeating the same arguments as before,
we conclude that there exists a constant $C_p < \infty$
such that for $d \ge C_p$,
\begin{equation*}
\tv{\cG(n, p)}{\dG(n, p, d, q)}
\ge 1 - C_p\biggl(\frac{d}{n^3q^6} + \frac{1}{n^2q^2} + \frac{1}{n}\biggr).
\end{equation*}

\section*{Acknowledgement}
S.~L. thanks Cong Ma, Kaizheng Wang, and Jiacheng Zhang for numerous insightful
discussions throughout the years.
The authors acknowledge the generous support from NSF grant DMS 1811724.

\bibliographystyle{plainnat}

\end{document}